\begin{document}

\theoremstyle{plain}

\newtheorem{thm}{Theorem}[section]

\newtheorem{lem}[thm]{Lemma}
\newtheorem{Problem B}[thm]{Problem B}

\newtheorem{pro}[thm]{Proposition}
\newtheorem{conj}[thm]{Conjecture}
\newtheorem{cor}[thm]{Corollary}
\newtheorem{que}[thm]{Question}
\newtheorem{rem}[thm]{Remark}
\newtheorem{defi}[thm]{Definition}

\newtheorem*{thmA}{Theorem A}
\newtheorem*{thmB}{Theorem B}
\newtheorem*{corB}{Corollary B}
\newtheorem*{thmC}{Theorem C}
\newtheorem*{thmD}{Theorem D}
\newtheorem*{thmE}{Theorem E}
 
\newtheorem*{thmAcl}{Main Theorem$^{*}$}
\newtheorem*{thmBcl}{Theorem B$^{*}$}
\newtheorem{thml}{Theorem}
\renewcommand*{\thethml}{\Alph{thml}}   
\newtheorem{quel}[thml]{Question}

\newcommand{\dd}{\mathrm{d}}

\newcommand{\bC}{{\mathbf{C}}}
\newcommand{\bH}{{\mathbf{H}}}
\newcommand{\bN}{{\mathbf{N}}}
\newcommand{\bT}{{\mathbf{T}}}
\newcommand{\bZ}{{\mathbf{Z}}}
\newcommand{\Maxn}{\operatorname{Max_{\textbf{N}}}}
\newcommand{\Syl}{\operatorname{Syl}}
\newcommand{\Lin}{\operatorname{Lin}}
\newcommand{\U}{\mathbf{U}}
\newcommand{\R}{\mathbf{R}}
\newcommand{\dl}{\operatorname{dl}}
\newcommand{\Con}{\operatorname{Con}}
\newcommand{\sym}{\mathfrak{S}}
\newcommand{\cl}{\operatorname{cl}}
\newcommand{\Stab}{\operatorname{Stab}}
\newcommand{\Aut}{\operatorname{Aut}}
\newcommand{\Ker}{\operatorname{Ker}}
\newcommand{\InnDiag}{\operatorname{InnDiag}}
\newcommand{\fl}{\operatorname{fl}}
\newcommand{\Irr}{\operatorname{Irr}}
\newcommand{\FF}{\mathbb{F}}
\newcommand{\EE}{\mathbb{E}}
\newcommand{\POmega}{{\operatorname{P\Omega}}}
\newcommand{\normal}{\trianglelefteq}
\newcommand{\sn}{\normal\normal}
\newcommand{\Bl}{\mathrm{Bl}}
\newcommand{\NN}{\mathbb{N}}
\newcommand{\N}{\mathbf{N}}
\newcommand{\bfC}{\mathbf{C}}
\newcommand{\bfO}{\mathbf{O}}
\newcommand{\bfF}{\mathbf{F}}
\newcommand\wh[1]{\hstretch{2}{\hat{\hstretch{.5}{#1}}}}
\newcommand\wt[1]{\widetilde{#1}}
\newcommand\bg[1]{\mathbf{#1}}
\newcommand{\fS}{{\mathfrak{S}}}
\newcommand{\fA}{{\mathfrak{A}}}
\newcommand{\6}{^}

\def\GGG{{\mathcal G}}
\def\HHH{{\mathcal H}}
\def\HH{{\mathcal H}}
\def\irra#1#2{{\rm Irr}_{#1}(#2)}

\renewcommand{\labelenumi}{\upshape (\roman{enumi})}

\newcommand{\PSL}{\operatorname{PSL}}
\newcommand{\PSU}{\operatorname{PSU}}
\newcommand{\alt}{\operatorname{Alt}}

\providecommand{\V}{\mathrm{V}}
\providecommand{\E}{\mathrm{E}}
\providecommand{\ir}{\mathrm{Irm_{rv}}}
\providecommand{\Irrr}{\mathrm{Irm_{rv}}}
\providecommand{\re}{\mathrm{Re}}

\numberwithin{equation}{section}
\def\irrp#1{{\rm Irr}_{p'}(#1)}

\def\ibrrp#1{{\rm IBr}_{\Bbb R, p'}(#1)}
\def\C{{\mathbb C}}
\def\Q{{\mathbb Q}}
\def\irr#1{{\rm Irr}(#1)}
\def\irrp#1{{\rm Irr}_{p^\prime}(#1)}
\def\irrq#1{{\rm Irr}_{q^\prime}(#1)}
\def \c#1{{\cal #1}}
\def \aut#1{{\rm Aut}(#1)}
\def\cent#1#2{{\bf C}_{#1}(#2)}
\def\norm#1#2{{\bf N}_{#1}(#2)}
\def\E#1{{\bf{E}}(#1)}
\def\F#1{{\bf{F}}(#1)}

\def\zent#1{{\bf Z}(#1)}
\def\syl#1#2{{\rm Syl}_#1(#2)}
\def\normal{\triangleleft\,}
\def\oh#1#2{{\bf O}_{#1}(#2)}
\def\Oh#1#2{{\bf O}^{#1}(#2)}
\def\det#1{{\rm det}(#1)}
\def\gal#1{{\rm Gal}(#1)}
\def\ker#1{{\rm ker}(#1)}
\def\normalm#1#2{{\bf N}_{#1}(#2)}
\def\alt#1{{\rm Alt}(#1)}
\def\iitem#1{\goodbreak\par\noindent{\bf #1}}
   \def \mod#1{\, {\rm mod} \, #1 \, }
\def\sbs{\subseteq}

\def\gc{{\bf GC}}
\def\ngc{{non-{\bf GC}}}
\def\ngcs{{non-{\bf GC}$^*$}}
\newcommand{\notd}{{\!\not{|}}}

\newcommand{\Z}{\mathbf{Z}}
\newcommand{\Out}{{\mathrm {Out}}}
\newcommand{\Mult}{{\mathrm {Mult}}}
\newcommand{\Inn}{{\mathrm {Inn}}}
\newcommand{\IBR}{{\mathrm {IBr}}}
\newcommand{\IBRL}{{\mathrm {IBr}}_{\ell}}
\newcommand{\IBRP}{{\mathrm {IBr}}_{p}}
\newcommand{\cd}{\mathrm{cd}}
\newcommand{\ord}{{\mathrm {ord}}}
\def\id{\mathop{\mathrm{ id}}\nolimits}
\renewcommand{\Im}{{\mathrm {Im}}}
\newcommand{\Ind}{{\mathrm {Ind}}}
\newcommand{\diag}{{\mathrm {diag}}}
\newcommand{\soc}{{\mathrm {soc}}}
\newcommand{\End}{{\mathrm {End}}}
\newcommand{\sol}{{\mathrm {sol}}}
\newcommand{\Hom}{{\mathrm {Hom}}}
\newcommand{\Mor}{{\mathrm {Mor}}}
\newcommand{\Mat}{{\mathrm {Mat}}}
\def\rank{\mathop{\mathrm{ rank}}\nolimits}
\newcommand{\Tr}{{\mathrm {Tr}}}
\newcommand{\tr}{{\mathrm {tr}}}
\newcommand{\Gal}{{\rm Gal}}
\newcommand{\Spec}{{\mathrm {Spec}}}
\newcommand{\ad}{{\mathrm {ad}}}
\newcommand{\Sym}{{\mathrm {Sym}}}
\newcommand{\Char}{{\mathrm {Char}}}
\newcommand{\pr}{{\mathrm {pr}}}
\newcommand{\rad}{{\mathrm {rad}}}
\newcommand{\abel}{{\mathrm {abel}}}
\newcommand{\PGL}{{\mathrm {PGL}}}
\newcommand{\PCSp}{{\mathrm {PCSp}}}
\newcommand{\PGU}{{\mathrm {PGU}}}
\newcommand{\codim}{{\mathrm {codim}}}
\newcommand{\ind}{{\mathrm {ind}}}
\newcommand{\Res}{{\mathrm {Res}}}
\newcommand{\Lie}{{\mathrm {Lie}}}
\newcommand{\Ext}{{\mathrm {Ext}}}
\newcommand{\Alt}{{\mathrm {Alt}}}
\newcommand{\AAA}{{\sf A}}
\newcommand{\SSS}{{\sf S}}
\newcommand{\SL}{{\mathrm {SL}}}
\newcommand{\Sp}{{\mathrm {Sp}}}
\newcommand{\PSp}{{\mathrm {PSp}}}
\newcommand{\SU}{{\mathrm {SU}}}
\newcommand{\GL}{{\mathrm {GL}}}
\newcommand{\GU}{{\mathrm {GU}}}
\newcommand{\Spin}{{\mathrm {Spin}}}
\newcommand{\CC}{{\mathbb C}}
\newcommand{\CB}{{\mathbf C}}
\newcommand{\RR}{{\mathbb R}}
\newcommand{\QQ}{{\mathbb Q}}
\newcommand{\ZZ}{{\mathbb Z}}
\newcommand{\bfN}{{\mathbf N}}
\newcommand{\bfZ}{{\mathbf Z}}
\newcommand{\PP}{{\mathbb P}}
\newcommand{\cG}{{\mathcal G}}
\newcommand{\cH}{{\mathcal H}}
\newcommand{\cQ}{{\mathcal Q}}
\newcommand{\GA}{{\mathfrak G}}
\newcommand{\cT}{{\mathcal T}}
\newcommand{\cL}{{\mathcal L}}
\newcommand{\IBr}{\mathrm{IBr}}
\newcommand{\cS}{{\mathcal S}}
\newcommand{\cR}{{\mathcal R}}
\newcommand{\GCD}{\GC^{*}}
\newcommand{\TCD}{\TC^{*}}
\newcommand{\FD}{F^{*}}
\newcommand{\GD}{G^{*}}
\newcommand{\HD}{H^{*}}
\newcommand{\GCF}{\GC^{F}}
\newcommand{\TCF}{\TC^{F}}
\newcommand{\PCF}{\PC^{F}}
\newcommand{\GCDF}{(\GC^{*})^{F^{*}}}
\newcommand{\RGTT}{R^{\GC}_{\TC}(\theta)}
\newcommand{\RGTA}{R^{\GC}_{\TC}(1)}
\newcommand{\Om}{\Omega}
\newcommand{\eps}{\epsilon}
\newcommand{\varep}{\varepsilon}
\newcommand{\al}{\alpha}
\newcommand{\chis}{\chi_{s}}
\newcommand{\sigmad}{\sigma^{*}}
\newcommand{\PA}{\boldsymbol{\alpha}}
\newcommand{\gam}{\gamma}
\newcommand{\lam}{\lambda}
\newcommand{\la}{\langle}
\newcommand{\genf}{F^*}
\newcommand{\ra}{\rangle}
\newcommand{\hs}{\hat{s}}
\newcommand{\htt}{\hat{t}}
\newcommand{\tG}{\hat G}
\newcommand{\St}{\mathsf {St}}
\newcommand{\bfs}{\boldsymbol{s}}
\newcommand{\bfl}{\boldsymbol{\lambda}}
\newcommand{\tn}{\hspace{0.5mm}^{t}\hspace*{-0.2mm}}
\newcommand{\ta}{\hspace{0.5mm}^{2}\hspace*{-0.2mm}}
\newcommand{\tb}{\hspace{0.5mm}^{3}\hspace*{-0.2mm}}
\def\skipa{\vspace{-1.5mm} & \vspace{-1.5mm} & \vspace{-1.5mm}\\}
\newcommand{\tw}[1]{{}^#1\!}
\renewcommand{\mod}{\bmod \,}
\newcommand{\type}[1]{\operatorname{#1}}
\newcommand{\GC}{{\mathcal{G}}}
\newcommand{\EC}{{\mathcal{E}}}
\newcommand{\LC}{{\mathcal{L}}}
\newcommand{\HC}{{\mathcal{H}}}
\newcommand{\TC}{{\mathcal{T}}}
\newcommand{\sct}{{\mathrm {sc}}}

\newcommand{\mandicomment}{\textcolor{blue}}
\newcommand{\eugecomment}{\textcolor{red}}

\marginparsep-0.5cm

\renewcommand{\thefootnote}{\fnsymbol{footnote}}
\footnotesep6.5pt

\title{Character degrees in blocks and defect groups}

\author{Eugenio Giannelli}
\address[E. Giannelli]{Dipartimento di Matematica e Informatica U.~Dini, Viale Morgagni 67/a, Firenze, Italy}
\email{eugenio.giannelli@unifi.it}
\author{J. Miquel Mart\'inez}
\address[J. M. Mart\'inez]{Departament de Matem\`atiques, Universitat de Val\`encia,  46100 Burjassot, Val\`encia, Spain}
\email{josep.m.martinez@uv.es}
\author{A. A. Schaeffer Fry}
\address[A. A. Schaeffer Fry]{Deptartment of Mathematics and Statistics, Metropolitan State University of Denver, Denver, CO 80217, USA}
\email{aschaef6@msudenver.edu}

\thanks{The second author acknowledges support from
Spanish Ministerio de Ciencia
e Innovaci\'on
PID2019-103854GB-I00 and FEDER funds, as well as a predoctoral grant from the ``Atracci\'o del talent" programme from the Universitat de Val\`encia. The third author is partially supported by a grant from the National Science Foundation, Award No. DMS-2100912.}

\maketitle

\begin{abstract}
A recent question of Gabriel Navarro asks whether it is true that the derived length of a defect group is less than or equal to the number of degrees of irreducible characters in a block. In this article, we bring new evidence  towards the validity of this statement. 
\end{abstract}

\section{Introduction}
 One of the main themes in  the representation theory of finite groups is to relate global and local invariants. 
 For example, for a prime $p$, Brauer's Problem 12 asks what can be said about the Sylow $p$-subgroups of $G$ from the makeup of the set of irreducible characters $\Irr(G)$.  More generally, the problem of determining structural properties of a defect group of a $p$-block $B$ of $G$ based on the set $\Irr(B)$ of irreducible characters of $B$ has been an important topic of study in the area.
The proof of one direction of the Brauer Height Zero Conjecture in \cite{kessarmalle} and the recent proof for principal blocks \cite{MN21} have been major breakthroughs in this line of investigation. Other recent results studying the $p$-structural properties
of the defect groups in terms of properties of the irreducible characters in the block
can be found in \cite{FLLMZ}, \cite{GRSS} and \cite{nrsv}. The present paper is a contribution to this lively area of research. 

\medskip

Let $B$ be a Brauer $p$-block of a finite group $G$, with defect group $D$. We write
 $\cd(B)= \{\chi(1)\mid\chi\in \Irr(B)\}$ for the set of the degrees
of the complex irreducible characters in $B$ and $\mathrm{dl}(D)$ for the derived length of the solvable group $D$. This article's main motivation is the following question, recently appeared in \cite{mar21}.
\begin{quel}[Navarro]{\label{a}}
Is it true that  $\mathrm{dl}(D)\leq |\cd(B)|$? 
\end{quel}

Question \ref{a} has a positive answer 
if $|\cd(B)|=1$. In fact, a theorem of Okuyama and Tsushima \cite{OT83} shows that in this case the defect group $D$ is abelian. Similarly, Navarro's prediction is confirmed whenever $D$ is normal in $G$ by \cite[Theorem 8]{reynolds}. 
On the other hand, at the time of this writing, 
the case where $|\cd(B)|=2$ remains an open problem, although it would follow from an affirmative answer to the Brauer Height Zero Conjecture, along with a conjecture of Malle and Navarro \cite{MN11}.  
Specializing the question to the principal block $B=B_0(G)$, it was recently shown by the second author in \cite{mar21} that it is true that if $|\cd(B_0(G))|=2$ then the Sylow $p$-subgroups of $G$ have derived length at most $2$.  
In this article we extend the above mentioned result by proving that the following holds.


\begin{thml}\label{main}
Let $p$
be a prime, let $G$ be a finite group with Sylow $p$-subgroup $P$, and let $B_0(G)$ be the principal $p$-block of $G$.
If  $|\cd(B_0(G))|\leq 3$, then $\mathrm{dl}(P)\leq |\cd(B_0(G))|$.
\end{thml}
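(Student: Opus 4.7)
The plan is to treat the new case $|\cd(B_0(G))|=3$, since the cases $|\cd(B_0(G))|\in\{1,2\}$ are already established by \cite{OT83} and \cite{mar21}. I would argue by minimal counterexample, assuming $G$ has minimal order with $|\cd(B_0(G))|=3$ and $\mathrm{dl}(P)\ge 4$.

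The first step is the usual reduction via normal subgroups. By Brauer's third main theorem, inflation embeds $\Irr(B_0(G/N))$ into $\Irr(B_0(G))$ for any $N\trianglelefteq G$, so $\cd(B_0(G/N))\subseteq \cd(B_0(G))$. If $p\nmid |N|$, then $PN/N\cong P$ is a Sylow $p$-subgroup of $G/N$, and minimality of $G$ forces $\mathrm{dl}(P)\le 3$, a contradiction. Hence $\mathbf{O}_{p'}(G)=1$ and $F^*(G)=\mathbf{O}_p(G)\cdot E(G)$. I would then use the recent proof of the Brauer Height Zero Conjecture for principal blocks \cite{MN21} together with Malle--Navarro-type estimates \cite{MN11} to impose further restrictions; in particular, the presence or absence of $p'$-degrees in $\cd(B_0(G))$ dictates whether $P$ must already be abelian, and otherwise it pins down how many non-trivial degrees can appear.

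The core of the proof will be a CFSG-based case analysis of the layer $E(G)$. When $E(G)\ne 1$, each simple quotient $S$ of a component of $E(G)$ satisfies $|\cd(B_0(S))|\le 3$ via Clifford theory applied to characters of $B_0(G)$ lying above characters of the component; one then invokes the classification of finite simple groups to enumerate such $S$ (alternating groups, sporadic groups, and groups of Lie type in defining or non-defining characteristic) along with their Sylow $p$-structure, and checks that no admissible configuration yields $\mathrm{dl}(P)\ge 4$. The complementary case $E(G)=1$ reduces to a $p$-solvable situation with $F^*(G)=\mathbf{O}_p(G)$ self-centralising, where more classical tools from the block theory of $p$-solvable groups apply. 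I expect the CFSG step, particularly the Lie-type analysis in low rank and defining characteristic, to be the main obstacle: these are precisely the settings where $|\cd(B_0(S))|$ can be small while the Sylow structure remains rich, so the enumeration and the control of how $\mathbf{O}_p(G)$ and the Sylow subgroups of the components combine inside $P$ through Clifford theory will be the delicate part.
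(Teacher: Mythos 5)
Your sketch captures the global shape of the argument — minimal counterexample, kill $\mathbf{O}_{p'}(G)$, then a CFSG analysis of $E(G)$ versus the $E(G)=1$ case — and correctly flags Malle--Navarro height-zero results as inputs. But there are two genuine gaps, one strategic and one technical, that keep this from being a proof outline that matches what actually needs to happen.

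First, you never carry out the preliminary dichotomy that makes the problem tractable. Writing $\cd(B_0(G))=\{1,m,n\}$, the paper first disposes of the cases $p\mid mn$ (Isaacs--Smith gives that $G/\mathbf{O}_{p'}(G)$ is a $p$-group, so Taketa's theorem finishes) and $p\nmid mn$ (the height-zero theorem for principal blocks \cite{MN21} gives $P$ abelian), and then uses \cite{GRSS} to force $p\in\{2,3\}$ when exactly one of $m,n$ is divisible by $p$. Without this, you have no control on $p$ and the subsequent simple-group analysis has no chance of terminating.

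Second, and more seriously, your claim that ``each simple quotient $S$ of a component satisfies $|\cd(B_0(S))|\le 3$ via Clifford theory'' is not a statement you can extract from $|\cd(B_0(G))|=3$, and it is not the statement the paper proves about simple groups. When the layer is a direct product $\bar K_1\times\cdots\times\bar K_t$ of $t>1$ conjugate components, characters of $B_0(\bar G)$ lying over the layer have degrees that are $t$-fold products or $t$-multiples of degrees from $B_0(\bar K)$; there is no clean inheritance of the degree bound down to $S$. The hard part of the reduction is precisely to rule out $t>1$, and the paper does this by producing \emph{specific} characters in $B_0(\bar G)$ via two different constructions — ordinary induction from $\mathbf{N}_{\bar G}(\bar K)/\mathbf{C}_{\bar G}(\bar K)$ (Lemma \ref{induction}) and tensor induction of extensions of characters of $\bar K$ (Lemma \ref{tensor}, which needs the block-theoretic control of \cite{NT12}) — and then deriving Diophantine contradictions of the shape $t\alpha(1)=\alpha(1)^t$ among the resulting degrees. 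Correspondingly, the simple-group statements that are actually needed (Theorems \ref{thm:23simple}--\ref{thm:3simple}) are existence statements about characters of $B_0(S)$ of prescribed degrees that extend compatibly to $B_0(\Aut(S))$, not bounds on $|\cd(B_0(S))|$. Your outline, as written, leaves the $t>1$ case unresolved and asks the CFSG step to verify a property that the simple groups need not satisfy.

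The $E(G)=1$ branch you describe is essentially right: $F^*(G)=\mathbf{O}_p(G)$ self-centralising forces $G$ to have a unique $p$-block (Lemma \ref{uniqueblock}), so $\Irr(B_0(G))=\Irr(G)$ and Taketa's theorem applies directly.
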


As mentioned before, Theorem B extends the results of \cite{mar21} by confirming Question \ref{a} for principal blocks with at most three distinct character degrees. 
The strategy we use to prove this statement is to reduce it to a few technical statements about the principal blocks of non-abelian simple groups (see Section \ref{sec: reduction}). 
Checking these statements led us to the discovery of some interesting features of several important classes of finite groups. For instance, we show that Question \ref{a} holds for symmetric and alternating groups, as well as for general linear groups in defining characteristic (respectively denoted by $\fS_n$, $\fA_n$ and $\GL_n(q)$) as a consequence of the following stronger statement.



\begin{thml}\label{thm: C}
Let $p$ be a prime and let $B$ be a $p$-block of $\fS_n$, $\fA_n$ or $\GL_n(q)$ with $q$ a power of $p$. Let $D$ be a defect group of $B$. Then $\mathrm{dl}(D)\leq |\mathrm{ht}(B)|.$
\end{thml}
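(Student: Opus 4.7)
The plan is to treat the three families separately.

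For $\GL_n(q)$ in defining characteristic (so $q=p^f$), the only $p$-block with a nontrivial defect group is the principal block $B_0$, whose defect group is the Sylow $p$-subgroup $U$ of upper unitriangular matrices. A direct computation of the derived series gives $U^{(k)}=\{I+X:X_{ij}=0\text{ whenever }j-i<2^k\}$, so $\mathrm{dl}(U)=\lceil\log_2 n\rceil$. Since $|G|/|U|=\prod_{i=1}^{n}(q^i-1)$ is coprime to $p$, heights in $B_0$ coincide with $\nu_p(\chi(1))$. The unipotent character $R_\mu$ indexed by $\mu\vdash n$ has $\nu_p(R_\mu(1))=f\cdot n(\mu)$ where $n(\mu)=\sum_{i}(i-1)\mu_i$. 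Specializing to hooks $\mu=(n-k,1^k)$ with $0\le k\le n-2$ (so excluding the Steinberg character, which is defect zero) produces $n-1$ distinct heights $f\binom{k+1}{2}$, and since $n-1\ge\lceil\log_2 n\rceil$ for $n\ge 2$, this disposes of the $\GL_n(q)$ case.

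For $\fS_n$, Nakayama's conjecture parametrizes blocks by pairs $(\gamma,w)$ (core, weight), and the defect group $D$ is a Sylow $p$-subgroup of $\fS_{pw}$ with $\mathrm{dl}(D)=1+\lfloor\log_p w\rfloor$ for $w\ge 1$. Using the $p$-quotient bijection $\lambda\mapsto(\lambda^{(0)},\ldots,\lambda^{(p-1)})$, the hook-length formula, and Kummer's theorem, one obtains
\[
h(\chi^\lambda) = \nu_p\!\binom{w}{|\lambda^{(0)}|,\ldots,|\lambda^{(p-1)}|} + \sum_{i=0}^{p-1}\nu_p(f^{\lambda^{(i)}}).
\]
To realize at least $1+\lfloor\log_p w\rfloor$ distinct heights in $B$, I would combine two explicit families of partitions: (i) setting $(\lambda^{(0)},\lambda^{(1)})=((p^j),(w-p^j))$ with remaining components empty, which gives height $\nu_p\binom{w}{p^j}$, and (ii) choosing $\lambda^{(0)}$ to be a partition $\mu\vdash w$ with large $\nu_p(f^\mu)$ (for instance a staircase or near-rectangular shape). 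Construction (i) handles weights with well-spaced base-$p$ digits, and (ii) handles the remaining cases.

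For $\fA_n$, Clifford theory relative to $\fA_n\trianglelefteq\fS_n$ provides the link with the $\fS_n$ result. When $p$ is odd, $|\fS_n:\fA_n|=2$ is coprime to $p$, defect groups of corresponding blocks agree, and the heights match; hence the $\fS_n$ bound transfers directly (after accounting for the possible fusion of blocks at self-conjugate cores). When $p=2$, one replaces $D$ by $D\cap\fA_n$ (with $\mathrm{dl}(D\cap\fA_n)\le\mathrm{dl}(D)$), and the heights of the $\fA_n$-constituents of $\chi^\lambda\downarrow_{\fA_n}$ differ from $h(\chi^\lambda)$ by at most one, so the $\fS_n$ bound again descends to $\fA_n$.

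The principal obstacle lies in step (ii) for $\fS_n$: for weights $w$ whose base-$p$ expansion is densely packed (e.g.\ $w=p^{k+1}-1$), construction (i) collapses to give only height $0$, and one must verify a combinatorial lower bound on $|\{\nu_p(f^\mu):\mu\vdash w\}|$. Establishing that this set has size at least $1+\lfloor\log_p w\rfloor$ is the key technical lemma; I expect it to be tractable by a careful analysis of specific families of partitions (staircases and near-rectangular shapes) whose hook-length products have controllable $p$-adic valuation.
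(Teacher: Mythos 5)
Your height formula via the $p$-quotient,
\[
h_p(\chi^\lambda)=\nu_p\binom{w}{|\lambda^{(0)}|,\ldots,|\lambda^{(p-1)}|}+\sum_{i=0}^{p-1}\nu_p(f^{\lambda^{(i)}}),
\]
is correct — it is Olsson's $p$-core-tower height formula unrolled by one layer — and so are $\dl(D)=1+\lfloor\log_p w\rfloor$ for a weight-$w$ block of $\fS_n$ and $\dl(U)=\lceil\log_2 n\rceil$ for the unitriangular group. One minor slip: for $\GL_n(q)$ with $q>2$ the principal block is \emph{not} the only block of positive defect; there are $q-1$ blocks of maximal defect, indexed by the linear characters of the centre. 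Your unipotent-hook computation still does the job because tensoring with a power of $\det$ is a degree-preserving bijection onto $B_0$, but that step needs to be said; the paper avoids the issue entirely by quoting Moret\'o's Lemma~3.1, which bounds $|\mathrm{ht}(B)|\geq n-1$ uniformly over all positive-defect blocks, together with the nilpotency-class bound $n-1$ for the Sylow subgroup.

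The essential gap is the one you flag: the lemma $|\{\nu_p(f^\mu):\mu\vdash w\}|\geq 1+\lfloor\log_p w\rfloor$. This is not a tractable side calculation to be handled by staircases and near-rectangles; it is, in effect, the theorem again one weight down, and the obvious explicit families fail in complementary regimes. For $p=2$ and $w$ with all base-$2$ digits equal to $1$, every $\binom{w}{j}$ is odd by Lucas, so your multinomial construction (i) gives only height $0$; while for $w=p^{k-1}$ every hook degree $f^{(w-j,1^j)}=\binom{w-1}{j}$ is prime to $p$, so a hook-based version of (ii) also stalls. The paper sidesteps any aggregate statement about $\nu_p(f^\mu)$ by using the \emph{entire} $p$-core tower rather than only its top layer: for each $j=0,\ldots,k-1$ it writes down an explicit partition $\lambda_j$ with $p$-core $\gamma$ by prescribing every row of its tower, arranged so that passing from $\lambda_{j-1}$ to $\lambda_j$ increases the size of one row by $p$ and decreases the next row by $1$; Olsson's formula $h_p(\chi^\lambda)=\bigl(\sum_j|T_j(\lambda)|-a_j\bigr)/(p-1)$ then increments the height by exactly one at each step, producing heights $0,1,\ldots,k-1$ with no case analysis, no Granville--Ono, and no induction on $w$. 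Spreading the weight through several layers of the tower, rather than concentrating it all in the single $p$-quotient, is precisely the freedom your approach lacks; if you iterate your one-level identity to close the gap, you will find yourself reconstructing the $p$-core-tower argument.
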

\noindent Here $\mathrm{ht}(B)$ denotes the set of heights of the irreducible characters in $B$. 

We care to remark that it is not possible in general to bound the derived length of the defect groups by the number of distinct heights in the corresponding block. In fact, in Section \ref{sec: 5} we construct a solvable group that does not satisfy the conclusion of Theorem \ref{thm: C}.
It is worth pointing out that this construction also provides a counterexample to the main conjecture of the recent article \cite{flz}.


\subsection*{Acknowledgments}
The second author wishes to thank G. Navarro, N. Rizo and L. Sanus for helpful conversations on the topic of this paper. The third author would like to thank M. Cabanes for clarification about the principal block in defining characteristic.  The authors also thank G. Malle for his comments on an earlier version of this manuscript.


\section{Reduction to simple groups}\label{sec: reduction}

\subsection{Auxiliary lemmas}

\begin{lem}\label{gallagher-rizo} 
Let $G$ be a finite group, $N\normal G$, and $\theta\in \Irr(B_0(N))$. Assume $\theta$ extends to $\chi\in\Irr(B_0(G))$. Then $\{\mu\chi\mid\mu\in\Irr(B_0(G/N))\}\sbs\Irr(G|\theta)\cap\Irr(B_0(G))$.
\end{lem}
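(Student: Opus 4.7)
The plan is to combine Gallagher's theorem with two standard facts about the principal block. First, since by hypothesis $\chi$ extends $\theta$, Gallagher's theorem states that the map $\mu\mapsto\mu\chi$ is a bijection $\Irr(G/N)\to\Irr(G|\theta)$. Viewing any $\mu\in\Irr(B_0(G/N))$ as a character of $G$ via inflation, this step immediately yields $\mu\chi\in\Irr(G|\theta)$, which is half of the desired inclusion.

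It then remains to verify $\mu\chi\in\Irr(B_0(G))$. For this I would invoke two standard block-theoretic facts. First, inflation from $G/N$ sends $\Irr(B_0(G/N))$ into $\Irr(B_0(G))$, so the inflated $\mu$ belongs to $\Irr(B_0(G))$. Second, if two irreducible characters both lie in $\Irr(B_0(G))$, then every irreducible constituent of their product lies in $\Irr(B_0(G))$ as well. Both assertions are routine consequences of the characterization of $B_0(G)$ in terms of central characters modulo a maximal ideal above $p$, together with the observation that inflating the trivial character of $G/N$ produces the trivial character of $G$ (whose central character is the one defining $B_0(G)$). Combining these with the irreducibility of $\mu\chi$ already furnished by Gallagher yields $\mu\chi\in\Irr(B_0(G))$, completing the proof.

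No serious obstacle is anticipated: the argument is essentially a reduction to well-known facts, with only bookkeeping required. The step requiring the most care is confirming (or citing) the closure of $\Irr(B_0(G))$ under products of irreducible characters; in a full write-up it would suffice to refer to a standard treatment such as Navarro's \emph{Characters and Blocks of Finite Groups}.
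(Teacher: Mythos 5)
Your first step (Gallagher's correspondence gives $\mu\chi\in\Irr(G|\theta)$) matches the paper. But the second half of your argument contains a genuine error. You assert that ``if two irreducible characters both lie in $\Irr(B_0(G))$, then every irreducible constituent of their product lies in $\Irr(B_0(G))$ as well,'' and you describe this as a ``routine consequence'' of the central-character description of $B_0(G)$. This is false. Products of principal-block characters are \emph{not} supported on the principal block in general. A concrete counterexample: take $G=\fA_5$ and $p=5$. The principal $5$-block of $\fA_5$ consists of the characters of degrees $1,3,3,4$, while the Steinberg character $\chi_5$ of degree $5$ lies alone in a block of defect zero. Writing $\chi_4$ for the degree-$4$ character, a direct computation gives $[\chi_4^2,\chi_5]=1$, so $\chi_4\in\Irr(B_0(G))$ but $\chi_4^2$ has the non-principal constituent $\chi_5$. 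Since the central character of a product does not factor through the central characters of the factors, there is no such closure principle to invoke, and no ``standard treatment'' will furnish one.

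The paper's proof therefore does not proceed via any general product-closure statement. After the Gallagher step, it cites a specific result of Rizo (\cite[Lemma 2.4]{Riz18}) that is tailored to this exact configuration: $\chi$ is an extension of a character $\theta\in\Irr(B_0(N))$, and $\mu$ is inflated from $\Irr(B_0(G/N))$. Observing that $\chi=1_{G/N}\chi$, Rizo's lemma then upgrades $\mu\in\Irr(B_0(G/N))$ to $\mu\chi\in\Irr(B_0(G))$. The extra structure (one factor inflated from the quotient, the other restricting to a principal-block character of $N$) is essential; without it the conclusion fails, as the $\fA_5$ example shows. To repair your argument you would need to locate and apply a result of this specific type rather than the general (and false) claim you used.
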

\begin{proof}
By Gallagher's correspondence, $\Irr(G|\theta)=\{\mu\chi\mid\mu\in\Irr(G/N)\}$. We have $\chi=1_{G/N}\chi$ and then by \cite[Lemma 2.4]{Riz18}, if $\mu\in\Irr(B_0(G/N))$ then $\mu\chi\in\Irr(B_0(G))$.\end{proof}

\begin{lem}\label{induction}
Let $G$ be a finite group and let $S_1\times\dots\times S_t\normal G$, where the $S_i$'s are subgroups permuted by $G$. Let $S=S_1$. Assume there exists $\alpha\in\Irr(\norm G S/\cent G S)$ in the principal block and such that $S$ is not contained in $\ker\alpha$. Then $\alpha^G\in \Irr(B_0(G))$.
\end{lem}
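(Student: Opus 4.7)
My plan is to set $T:=\norm G S$ and $N:=S_1\times\cdots\times S_t$, inflate $\alpha$ to a character of $T$ (still denoted $\alpha$), and apply the Clifford correspondence twice---first to $S\normal T$ and then to $N\normal G$---keeping track of blocks via Fong--Reynolds at each step. Since inflation sends $B_0(T/\cent G S)$ to $B_0(T)$, we have $\alpha\in B_0(T)$; also $N\le T$, because $S_i\le\cent G S\le T$ for every $i\ge 2$. I would choose an irreducible constituent $\beta$ of $\alpha|_S$, put $H:=\Stab_T(\beta)$, and let $\eta\in\Irr(H|\beta)$ be the (inner) Clifford correspondent, so that $\eta^T=\alpha$. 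Since $B_0(T)$ covers $B_0(S)$ we have $\beta\in B_0(S)$, and hence $\eta\in B_0(H)$.

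The key technical step will be to prove $\cent G S\le\ker\eta$. Using $\cent G S\normal T$ and $\cent G S\le H$, the induced-character formula gives, for every $c\in\cent G S$,
\[
\alpha(c)\;=\;\eta^T(c)\;=\;\tfrac{1}{|H|}\sum_{t\in T}\eta(t^{-1}ct).
\]
Since $\alpha$ is trivial on $\cent G S$, the left-hand side equals $\alpha(1)=|T:H|\,\eta(1)$, so the $T$-average of $\eta(t^{-1}ct)$ equals $\eta(1)$. Combined with $|\eta(x)|\le\eta(1)$ and the equality case of the triangle inequality, this forces $\eta(t^{-1}ct)=\eta(1)$ for every $t\in T$; in particular $c\in\ker\eta$.

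Now I would set $\theta:=\beta\boxtimes 1_{S_2}\boxtimes\cdots\boxtimes 1_{S_t}\in\Irr(N)$. The hypothesis $S\not\le\ker\alpha$ ensures $\beta\ne 1_S$, so any $g\in G$ fixing $\theta$ must fix the factor $S_1$ in the $G$-action on $\{S_1,\ldots,S_t\}$ and additionally fix $\beta$; hence $\Stab_G(\theta)=\Stab_T(\beta)=H$. The kernel inclusion above yields $S_2\cdots S_t\le\cent G S\le\ker\eta$, whence $\eta|_N=e\,\theta$ with $e=\langle\eta|_S,\beta\rangle$, so $\eta\in\Irr(H|\theta)$. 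The (outer) Clifford correspondence applied to $N\normal G$ and $\theta$ then produces the irreducible $\eta^G\in\Irr(G|\theta)$, and by transitivity of induction $\eta^G=(\eta^T)^G=\alpha^G$. Finally, $\theta\in B_0(N)$ because the principal block of a direct product is the external tensor product of the principal blocks of the factors, so a second application of Fong--Reynolds yields $\alpha^G\in B_0(G)$. I expect the kernel computation to be the main obstacle: it is what identifies $\eta|_N$ as a pure tensor with $\beta$ in the first position and trivial characters elsewhere, and thereby makes the outer Clifford step applicable with the desired stabilizer $H$.
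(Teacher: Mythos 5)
Your route to the irreducibility of $\alpha^G$ is sound but longer than necessary. The paper does not pass to the Clifford correspondent $\eta$ of $\alpha$ inside $T_\beta$ at all: since $\alpha$ is inflated from $T/C$ (with $T=\norm GS$, $C=\cent GS$) we already have $C\le\ker\alpha$ for free, so $\alpha_N=\alpha_S\boxtimes 1_{S_2}\boxtimes\cdots\boxtimes 1_{S_t}$ and $\psi:=\beta\boxtimes 1\boxtimes\cdots\boxtimes 1$ is a constituent of $\alpha_N$ directly. Combined with $G_\psi=T_\beta\le T$, Clifford correspondence (applied inside $T$ and then in $G$) immediately gives $\alpha^G\in\Irr(G)$. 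Your triangle-inequality computation that $C\le\ker\eta$ is correct and rather nice, but it is only needed because you chose to descend to $\eta$; it is a detour.

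The genuine gap is in the block bookkeeping. You write ``hence $\eta\in B_0(H)$'' (for $H=T_\beta$) by appealing to Fong--Reynolds for $S\normal T$, but the stabilizer $T_{B_0(S)}$ equals $T$ (a group always stabilizes the principal block of a normal subgroup), so the Fong--Reynolds correspondence for $B_0(S)$ is a bijection between blocks of $T$ and blocks of $T$ and says nothing about blocks of the proper subgroup $T_\beta$. The same problem afflicts the ``second application of Fong--Reynolds'': $G$ stabilizes $B_0(N)$, so that correspondence is again the identity and cannot carry you from $\theta\in B_0(N)$ plus $\eta^G\in\Irr(G)$ to $\eta^G\in B_0(G)$ (indeed $G$ may have several blocks covering $B_0(N)$). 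In short, $\eta\in B_0(T_\beta)$ is never established, and the final step does not follow as written. Happily the whole detour is unnecessary: once you know $\alpha^G\in\Irr(G)$, apply \cite[Corollary 6.2]{N98} to $\alpha\in\Irr(B_0(T))$ directly to conclude that $B_0(T)^G$ is defined and equals $\mathrm{bl}(\alpha^G)$, and then Brauer's Third Main Theorem gives $B_0(T)^G=B_0(G)$. This is exactly how the paper closes the argument, and it never needs to know which block of $T_\beta$ contains $\eta$.
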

\begin{proof}
Write $H=\norm G S$ and $C=\cent G S$. Let $\eta\in\Irr(S)$ be under $\alpha$. Now let $\psi=\eta\times 1_{S_2}\times\dots\times 1_{S_t}\in \Irr(S_1\times\dots\times S_t)$. Notice that $G_\psi=H_\eta$. Since $S_2\cdots S_t\sbs C\sbs \ker\alpha$ we have that $\alpha\in\Irr(H|\psi)$. Hence $\alpha^G\in\Irr(G)$ by Clifford's theorem. Now by \cite[Corollary 6.2]{N98}, $B_0(H)^G$ is defined and contains $\alpha^G$, and by Brauer's Third Main Theorem \cite[Theorem 6.7]{N98}, $B_0(H)^G=B_0(G)$ so we are done.
\end{proof}

\begin{lem}\label{invariant}
Let $G$ be a finite group and let $S_1\times\dots\times S_t\normal G$, where the $S_i$'s are subgroups permuted transitively by $G$. Let $S=S_1$, $S_i=S^{x_i}$ and $\alpha\in\Irr(S)$. If $\alpha$ is $\norm G S$-invariant then $\eta=\alpha^{x_1}\times\dots\times \alpha^{x_t}$ is $G$-invariant.
\end{lem}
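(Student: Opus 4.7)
The plan is a direct computation. Fix $g\in G$. Since $G$ permutes the factors $S_1,\ldots,S_t$, there is a unique permutation $\sigma=\sigma_g\in \sym_t$ with $gS_ig^{-1}=S_{\sigma(i)}$ for every $i$. Unpacking this identity through $S_i=S^{x_i}=x_i^{-1}Sx_i$ yields the key algebraic relation
\[
n_i:=x_{\sigma(i)}\,g\,x_i^{-1}\in \norm G S \quad (1\le i\le t),
\]
since conjugating $S$ by $n_i$ gives $x_{\sigma(i)}\,g\,S_i\,g^{-1}\,x_{\sigma(i)}^{-1}=x_{\sigma(i)}S_{\sigma(i)}x_{\sigma(i)}^{-1}=S$. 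This step is what transports the problem from the $S_i$'s back to the single group $S$ and its normalizer $N:=\norm G S$.

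Next, I would check $G$-invariance of $\eta$ by verifying $\eta(gyg^{-1})=\eta(y)$ for an arbitrary $y=(s_1,\ldots,s_t)\in S_1\times\cdots\times S_t$. Since $gs_ig^{-1}\in S_{\sigma(i)}$, the $j$-th coordinate of $gyg^{-1}$ in the factor $S_j$ is $g\,s_{\sigma^{-1}(j)}\,g^{-1}$, so reindexing by $i=\sigma^{-1}(j)$ gives
\[
\eta(gyg^{-1})=\prod_{i=1}^{t}\alpha^{x_{\sigma(i)}}(gs_ig^{-1})=\prod_{i=1}^{t}\alpha\!\left(x_{\sigma(i)}\,g\,s_i\,g^{-1}\,x_{\sigma(i)}^{-1}\right)=\prod_{i=1}^{t}\alpha\!\left(n_i\,(x_is_ix_i^{-1})\,n_i^{-1}\right).
\]
Because $x_is_ix_i^{-1}\in S$ and $\alpha$ is $N$-invariant, each factor collapses to $\alpha(x_is_ix_i^{-1})=\alpha^{x_i}(s_i)$, and taking the product recovers $\eta(y)$, completing the argument.

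The main (essentially only) obstacle is index bookkeeping: keeping $\sigma$ and $\sigma^{-1}$ correctly matched when tracking coordinates of $gyg^{-1}$, and verifying from $S_i=x_i^{-1}Sx_i$ that $x_{\sigma(i)}gx_i^{-1}$ indeed lies in $\norm G S$. Once this is done cleanly, the $N$-invariance hypothesis on $\alpha$ immediately finishes the proof.
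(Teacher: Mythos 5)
Your argument is correct and takes essentially the same approach as the paper: both reduce $G$-invariance of $\eta$ to the $\norm G S$-invariance of $\alpha$ by transporting each coordinate of a general element back to $S$ via the transversal $\{x_i\}$. Your single-pass computation using the permutation $\sigma_g$ and the normalizing elements $n_i = x_{\sigma(i)}gx_i^{-1}$ is a tidier repackaging of what the paper does in three stages (first for $s\in S$, then for $s\in S_i$, then for a general product $s_1\cdots s_t$) via the coset decomposition $G=\bigcup \norm G S\, x_j$ and the identity $\eta(s_1\cdots s_t)=\prod_i\eta(s_i)/\alpha(1)^{t^2-t}$.
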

\begin{proof}
Let $s\in S$, $x\in G$ and $H=\norm G S$. Notice that $G=\bigcup_{j=1}^tHx_j$ is a disjoint union, so $x^{-1}=hx_j$ for some $j\in \{1,\dots, t\}$ and some $h\in H$. Then $$ \eta^x(s)=\eta(s^{x^{-1}})=\eta(s^{hx_j})=\alpha^{x_j}(s^{hx_j})\prod_{i\neq j}\alpha^{x_i}(1).$$ Now $s^h\in S$ and then $\alpha^{x_j}(s^{hx_j})=\alpha(s^h)$. Since $\alpha$ is $H$-invariant we have $$\eta^x(s)=\alpha(s^h)\prod_{i\neq j}\alpha^{x_i}(1)=\alpha(s)\prod_{i\neq 1}\alpha^{x_i}(1)=\eta(s),$$
so $\eta^x(s)=\eta(s)$. 

Now $\eta^x(s^{x_j})=\eta^{xx_{j}^{-1}}(s)=\eta(s)=\eta^{x_{j}^{-1}}(s)=\eta(s^{x_j})$ applying the equality from the previous paragraph twice. In particular, for any $y\in S_i$, $\eta^x(y)=\eta(y)$.

Let $s_i\in S_i$ and notice that

$$\eta(s_1\cdots s_t)=\frac{\prod_{i=1}^t \eta(s_i)}{\prod_{i=1}^t\left(\prod_{j\neq i}\alpha_i(1)\right)}=\frac{\prod_{i=1}^t \eta(s_i)}{\alpha(1)^{t^2-t}}$$
then if $x\in G$ we have 
$$\eta^x(s_1\cdots s_t)=\eta(s_1^{x^{-1}}\cdots s_t^{x^{-1}})=\frac{\prod_{i=1}^t \eta(s_i^{x^{-1}})}{\alpha(1)^{t^2-t}}=\frac{\prod_{i=1}^t \eta(s_i)}{\alpha(1)^{t^2-t}}$$ which equals $\eta(s_1\cdots s_t)$, so we are done.
\end{proof}

\begin{lem}\label{tensor}
Let $G$ be a finite group and let $N=S_1\times\dots\times S_t\normal G$, where the $S_i$'s are subgroups transitively permuted by $G$. Let $S=S_1$. Assume that there exists $\alpha\in\Irr(B_0(S))$ with $\alpha(1)$ not divisible by $p$ and such that it extends to $\hat\alpha\in\Irr(B_0(\norm G S))$. Then the tensor induced character $\alpha^{\otimes G}$ is in the principal block of $G$.
\end{lem}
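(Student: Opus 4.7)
Set $H := \norm G S$ and fix coset representatives $x_1 = 1, x_2, \dots, x_t$ of $H$ in $G$ with $S^{x_i} = S_i$. Write $\chi := \hat\alpha^{\otimes G}$ for the tensor induction of $\hat\alpha$ from $H$ to $G$. My plan has two parts: first, to verify that $\chi$ is an irreducible character of $G$ extending the external product $\eta := \alpha^{x_1} \times \cdots \times \alpha^{x_t}$, which itself lies in $\Irr(B_0(N))$; and second, to establish $\chi \in \Irr(B_0(G))$.

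For the first part, since $\hat\alpha|_S = \alpha$ is irreducible and $S \normal H$, Clifford's theorem forces $\alpha$ to be $H$-invariant, and Lemma \ref{invariant} then yields that $\eta$ is $G$-invariant. The defining formula for tensor induction gives $\chi|_N = \eta$ and $\chi(1) = \alpha(1)^t = \eta(1)$, so $\chi$ is an irreducible extension of $\eta$. Moreover, since the principal block is conjugation-invariant, each $\alpha^{x_i} \in \Irr(B_0(S_i))$; as $\Irr(B_0(N))$ factorizes as the external product of the $\Irr(B_0(S_i))$, we obtain $\eta \in \Irr(B_0(N))$.

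The heart of the argument, and where I expect the bulk of the difficulty, is to establish $\chi \in \Irr(B_0(G))$. The strategy is a direct central-character calculation based on the explicit tensor-induction formula: for $g \in G$, one has
$$\chi(g) = \prod_{k=1}^{r} \hat\alpha(c_k),$$
where the $c_k \in H$ are the cycle products associated to the permutation of $G/H$ induced by $g$, each $c_k$ being $H$-conjugate to an appropriate power of $g$. For a $p$-regular element $g \in G$, every $c_k$ is $p$-regular in $H$. To verify the congruence $|K|\chi(g)/\chi(1) \equiv |K| \pmod{\mathfrak{p}}$ for each $G$-conjugacy class $K$ containing $g$, one combines the $B_0(H)$-congruences $|\widetilde K_k|\hat\alpha(c_k)/\hat\alpha(1) \equiv |\widetilde K_k| \pmod{\mathfrak{p}}$ (where $\widetilde K_k$ is the $H$-class of $c_k$) multiplicatively through the tensor-induction formula, using that $\chi(1) = \alpha(1)^t$ is coprime to $p$ to clear denominators. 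The principal technical obstacle is to relate the $G$-class size $|K|$ to the $H$-class sizes $|\widetilde K_k|$ in a way that allows all the individual congruences to combine cleanly; this requires a careful centralizer analysis between $H$ and $G$, and it is precisely where the hypothesis that $\alpha(1)$ be coprime to $p$ plays its essential role.
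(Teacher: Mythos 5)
Your restriction and invariance observations in the first paragraph are correct and match the paper's setup, but your proposed route for the crucial step is genuinely different from the paper's, and it leaves the hard part unresolved. The paper does \emph{not} attempt a direct central-character computation from the cycle-product formula. Instead, it introduces the intermediate normal subgroup $M := \bigcap_{i=1}^t H^{x_i} \normal G$ (which contains $N$) and proceeds in three moves: (i) since $B_0(H^{x_i})$ covers $B_0(M)$, each restriction $(\hat\alpha^{x_i})_M$ lies in $\Irr(B_0(M))$ and has $p'$-degree $\alpha(1)$, and by \cite[Lemma 10.4]{N18} the restriction $\chi_M$ is the product $\prod_i (\hat\alpha^{x_i})_M$, which by \cite[Lemma 3.5]{NT12} lies in $B_0(M)$ --- this is where $p \nmid \alpha(1)$ is actually used; (ii) for $Q \in \Syl_p(M)$ one has $Q \cap S_i \in \Syl_p(S_i)$, hence $\cent G Q \sbs \cent G {Q\cap S_i} \sbs H^{x_i}$ for every $i$, giving $\cent G Q \sbs M$, so that by \cite[Lemma 3.1]{NT12} the block $B_0(M)^G$ is defined and is the \emph{unique} block of $G$ covering $B_0(M)$; (iii) Brauer's Third Main Theorem identifies $B_0(M)^G = B_0(G)$, forcing $\chi \in \Irr(B_0(G))$.

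The gap in your plan is the paragraph you yourself flag as the principal obstacle, and it is not a small technicality. The cycle products $c_k$ in the tensor-induction formula are only determined up to $H$-conjugacy and reordering, they vary in number and length as $g$ ranges over a $G$-class, and there is no clean multiplicative relation between the $G$-class size $|K|$ and the $H$-class sizes of the $c_k$ that would let the individual $B_0(H)$-congruences combine into a $B_0(G)$-congruence. Trying to run a direct $\omega_\chi(\hat K) \equiv |K| \pmod{\mathfrak p}$ calculation here amounts to re-proving a version of Brauer correspondence by hand; it is exactly the kind of situation where the standard machinery (block induction, covering, Brauer's third main theorem) exists to avoid this. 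You should replace the central-character plan with the $M$-argument: the two nontrivial facts you need are that $\chi_M$ lies in $B_0(M)$ (this is where the $p'$-degree hypothesis enters, not in clearing denominators) and that $\cent G Q \sbs M$, which makes $B_0(M)^G = B_0(G)$ the only block above $B_0(M)$.
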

\begin{proof}
Write $H=\norm G S$. Let $\{x_1, \dots, x_t\}$ be a transversal for $H$ in $G$  such that $S^{x_i}=S_i$ and notice that $\norm G {S_i}=H^{x_i}$. Also, let $\alpha_i=\alpha^{x_i}\in\Irr(S_i)$. By Lemma \ref{invariant}, $\alpha^{x_1}\times\dots\times\alpha^{x_t}$ is $G$-invariant. Let $M=\bigcap_{i=1}^t H^{x_i}\normal G$. Since $B_0(H)$ covers $B_0(M)$ then $\hat\alpha_M\in\Irr(B_0(M))$. By \cite[Corollary 10.5]{N18}, the tensor induced character $\chi=\hat\alpha^{\otimes G}$ is an irreducible character of $G$. Also, by \cite[Lemma 10.4]{N18} we have that for $g\in M$,
 $$\chi(g)=\prod_{i=1}^t\hat\alpha^{x_i}(g).$$ Then $\chi_N=\alpha_1\times\dots\times\alpha_t\in\Irr(B_0(N))$ and $\chi_M=\prod_{i=1}^t\hat{\alpha}^{x_i}\in \Irr(M)$. Moreover, $\hat\alpha^{x_i}(1)=\alpha(1)$ is not divisible by $p$, so $\chi_M$ is in the principal block of $M$ by \cite[Lemma 3.5]{NT12}.
 
 Now let $Q\in\Syl_p(M)$. We have that $Q\cap S_i\in\Syl_p(S_i)$. Then $\cent G Q\sbs\cent G {Q\cap S_i}\sbs H^{x_i}$, and then $\cent G Q\sbs \bigcap_{i=1}^t H^{x_i}=M$.
By \cite[Lemma 3.1]{NT12} $B_0(M)^G$ is defined and is the unique block of $G$ covering $B_0(M)$. Now by Brauer's Third Main Theorem \cite[Theorem 6.7]{N98}, $B_0(M)^G=B_0(G)$ and we conclude that $\chi\in\Irr(B_0(G))$, as desired.
\end{proof}

\begin{lem}\label{uniqueblock}
Let $G$ be a finite group and assume $\cent G {\oh p G}\sbs \oh p G$. Then $G$ has a unique $p$-block.
\end{lem}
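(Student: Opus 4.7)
The plan is to work in the group algebra $kG$ over an algebraically closed field $k$ of characteristic $p$ and use the Brauer homomorphism to compare block idempotents. Write $P:=\oh p G$; the hypothesis says precisely that $\cent G P$ is itself a $p$-group (in fact $\cent G P = \zent P$), and this is the only structural input I will use.

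The first step is to recall the Brauer homomorphism
\[
\mathrm{Br}_P\colon (kG)^P \longrightarrow k\cent G P,
\]
a ring homomorphism, and to observe that every block idempotent $e_B$ of $G$ lies in $Z(kG)\subseteq (kG)^P$. The key input here is the classical fact that the normal $p$-subgroup $\oh p G$ is contained in a defect group of every $p$-block $B$ of $G$ (see \cite[Ch.~4]{N98}); by the standard Brauer-map characterization of defect groups, this yields $\mathrm{Br}_P(e_B)\neq 0$ for every block $B$.

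Next, because $\cent G P$ is a $p$-group, the algebra $k\cent G P$ is local, and in particular the only idempotents of its center are $0$ and $1$. Combined with the previous step, this forces $\mathrm{Br}_P(e_B)=1_{\cent G P}$ for every block $B$ of $G$.

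Finally, if $B_1\neq B_2$ were two distinct blocks, then $e_{B_1}e_{B_2}=0$ in $Z(kG)$, so applying the ring homomorphism $\mathrm{Br}_P$ would yield $1 = 1\cdot 1 = \mathrm{Br}_P(e_{B_1})\mathrm{Br}_P(e_{B_2}) = \mathrm{Br}_P(0) = 0$, a contradiction. Hence $G$ has a unique $p$-block, which is necessarily $B_0(G)$. The main (minor) obstacle is simply justifying the nonvanishing $\mathrm{Br}_P(e_B)\neq 0$, which relies on Brauer's characterization of defect groups together with the fact that $\oh p G$ lies in every defect group; both are classical and can be cited directly from the opening chapters of \cite{N98}.
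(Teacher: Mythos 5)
Your proof is correct but takes a genuinely different route from the paper's. The paper argues via block induction: letting $b$ be the unique block of $P=\oh p G$, it notes that $\cent G P\sbs P$ gives $P\cent G P\leq P\leq \norm G P$, so $b^G$ is defined by \cite[Theorem 4.14]{N98}; then combining \cite[Theorem 4.8]{N98} ($\oh p G$ lies in every defect group) with the uniqueness statement in the second part of Theorem 4.14 forces $b^G$ to be the only block. Your argument bypasses block induction entirely and works directly with block idempotents: you use the same input from \cite[Theorem 4.8]{N98}, but instead of passing through induced blocks you observe that $\mathrm{Br}_P(e_B)\neq 0$ for every block $B$ (since $P$ sits inside a defect group), that $\mathrm{Br}_P(e_B)$ is an idempotent of $k\cent G P$, and that $\cent G P=\zent P$ is a $p$-group so $k\cent G P$ is local with no nontrivial idempotents; multiplicativity of $\mathrm{Br}_P$ then rules out two distinct blocks. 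Your route is somewhat more self-contained, trading the extended first main theorem for the elementary facts that the Brauer map is a ring homomorphism and that $p$-group algebras over a field of characteristic $p$ are local; both proofs are correct and of comparable length.
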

\begin{proof}
Let $P=\oh p G$. Let $b$ be the unique block of $P$. Since $\cent G P P\sbs P$, by \cite[Theorem 4.14]{N98} $b^G$ is defined. By \cite[Theorem 4.8]{N98}, $P$ is contained in every defect group of every block of $G$. Applying the second part of \cite[Theorem 4.14]{N98}, we conclude that $b^G$ is the unique block of $G$.
\end{proof}

\subsection{Necessary results on simple groups}

We shall need the following results on simple groups.

\begin{lem}\label{psl}
Let $S=\PSL_2(3^k)$ for $k\geq 3$ and let $p=3$. Then for any $S\leq T\leq\Aut(S)$ there is a character $\gamma\in\Irr(B_0(T))$ such that $\gamma_S=a\alpha$ for some $\alpha\in\Irr(B_0(S))$, $a\in\{1,2\}$ and $\alpha(1)\geq 13$ is not divisible by $3$.\end{lem}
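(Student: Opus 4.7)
The plan is to exploit the explicit character theory of $S = \PSL_2(q)$ with $q = 3^k$, together with the action of $\Aut(S) = \PGL_2(q) \rtimes \langle \phi \rangle$ (where $\phi$ is the standard Frobenius). First I would record that the Sylow $3$-subgroup of $S$ is elementary abelian of order $q$, that the Steinberg character of degree $q$ forms a block of defect zero, and consequently $\Irr(B_0(S))$ consists of every character of $S$ other than $\St$: the trivial character, the principal series $\rho_\theta$ of degree $q+1$ (parameterized by $\{\theta, \theta^{-1}\}$ for non-trivial $\theta$ on the split torus of order $(q-1)/2$), the discrete series $\sigma_\chi$ of degree $q-1$ (analogously from the non-split torus of order $(q+1)/2$), and two ``small'' characters of degree $(q\pm 1)/2$ arising from the order-$2$ character of whichever torus has even order. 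Each non-trivial such character has degree $\geq 13$ and coprime to $3$.

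Next, I would describe the action of $\Aut(S)$ on these characters. The diagonal automorphism $\delta$ (the generator of $\PGL_2(q)/S$) fixes every $\rho_\theta$ and $\sigma_\chi$ but interchanges the two small characters, while the Frobenius $\phi$ acts on the torus character groups by $\theta \mapsto \theta^3$, thereby permuting the $\rho_\theta$ and $\sigma_\chi$ in orbits whose sizes depend on the multiplicative order of $3$ modulo the order of $\theta$, and fixing each small character individually (by a stabilizer-index argument inside $\Out(S) \leq C_2 \times C_k$).

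Given $T$ with $S \leq T \leq \Aut(S)$, I would then construct a $T$-invariant $\alpha \in \Irr(B_0(S))$ of degree $\geq 13$ coprime to $3$, distinguishing the cases according to $\bar T := T/S$. When $\bar T \leq \langle \phi \rangle$ (no diagonal component), a small character is $T$-invariant and serves as $\alpha$; when $\bar T$ contains $\delta$, I select $\alpha = \rho_\theta$ (or $\sigma_\theta$) with $\theta$ of appropriate order dividing $\gcd((q-1)/2, 3^j \pm 1)$ or $\gcd((q+1)/2, 3^j \pm 1)$, where $3^j = [T \cap (S \rtimes \langle \phi \rangle) : S]$, ensuring $\delta$-invariance (automatic) and invariance under the field automorphism $\phi^j$. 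With such $\alpha$ in hand, Clifford theory produces $\gamma \in \Irr(T)$ above $\alpha$ with $\gamma_S = a\alpha$; since $T/S$ embeds into $C_2 \times C_k$, its Schur multiplier has exponent dividing $2$, forcing $a \in \{1, 2\}$. Finally, $\gamma$ can be chosen in $B_0(T)$ via Lemma \ref{gallagher-rizo} combined with Brauer's Third Main Theorem, using that $B_0(T)$ covers $B_0(S)$.

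The main obstacle will be the arithmetic existence issue in the second case: for each relevant $j$, one must exhibit a divisor $m \geq 3$ of $(q-1)/2$ or $(q+1)/2$ dividing $3^j - 1$ or $3^j + 1$, producing a torus character yielding the required principal or discrete series. The hypothesis $k \geq 3$ (so $q \geq 27$) provides enough flexibility for this verification to succeed, but a short case analysis based on the parity of $k$ and the value of $j$ is needed, in particular when the field-automorphism part of $\bar T$ is the full $\langle \phi \rangle$, for which the existence of a non-trivial character of $T_s$ or $T_{ns}$ of order dividing $\gcd((q \pm 1)/2, 4)$ must be verified by splitting into the subcases $k$ odd and $k$ even.
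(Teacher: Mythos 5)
The paper's ``proof'' of this lemma is simply a citation to \cite[Proposition 3.8]{mar21}, so there is no written argument here to compare against; I will assess the proposal against the statement and its role in Step 5 of the reduction theorem.

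There is a genuine gap when $k$ is odd. Your reading $\gamma_S=a\alpha$ forces $\alpha$ to be $T$-invariant, and when $\bar T$ contains a diagonal automorphism you look for a $T$-invariant $\rho_\theta$ or $\sigma_\chi$ with $\theta$ of order $>2$ dividing $\gcd\big((q\mp1)/2,\,3^d\pm1\big)$, where $\phi^d$ generates the field part of $\bar T$. You flag that when $d=1$ one must exhibit $\theta$ of order dividing $\gcd\big((q\pm1)/2,4\big)$ and assert $k\ge 3$ gives ``enough flexibility.'' It does not: for $k$ odd, $(q-1)/2$ is odd (so the $\gcd$ is $1$) and $(q+1)/2\equiv 2\pmod 4$ (so the $\gcd$ is $2$), and a torus character of order $2$ does not produce an irreducible $\sigma_\chi$ --- it produces exactly the two small characters. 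Those small characters, though $\phi$-fixed, are interchanged by the diagonal automorphism, so they are not $T$-invariant either. Concretely, for $S=\PSL_2(27)$ and $T=\Aut(S)$ (so $\Out(S)\cong C_2\times C_3$) the $T$-orbits on $\Irr(B_0(S))\setminus\{1_S\}$ have sizes $3,3,3,3,2$: there is \emph{no} $T$-invariant character of degree $\ge 13$ at all, so your construction cannot succeed for such $T$. The way out is to drop the insistence on $T$-invariance and argue directly with a small character $\eta$ of degree $(q\pm1)/2\ge 13$: its $T$-orbit has size $1$ or $2$, its inertia group $T_\eta$ satisfies $T_\eta/S\le\langle\phi\rangle$ cyclic so $\eta$ extends to $T_\eta$, and Clifford correspondence plus \cite[Theorem 9.4]{N98} produces $\gamma\in\Irr(B_0(T))$ of degree $a\eta(1)$ with $a\in\{1,2\}$. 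This is what the lemma is actually used for in Step 5, where only the degree identity $\gamma(1)=a\alpha(1)$ with $\alpha\in\Irr(B_0(S))$, $3\nmid\alpha(1)\ge13$, is extracted.
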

\begin{proof}
This can be found in the proof of \cite[Proposition 3.8]{mar21}.
\end{proof}

The proof of the next three theorems is done in sections 3 and 4.

\begin{thm}\label{thm:23simple}
Let $p \in\{2,3\}$ and let $S$ be a finite nonabelian simple group of order divisible by $p$. Let $G$ be an almost simple group with socle $S$ and assume $S\not\in \{\PSL_2(3^k)\mid k\geq 2\}$ if $p=3$. 
Then there is some $\alpha\in\irr{B_0(S)}$ with $\alpha(1)>2$ that extends to $\irr{B_0(G)}$ and some $\beta\in\irr{B_0(G)}$ with $\beta(1)\neq \alpha(1)$ and $S\not\subseteq \ker{\beta}$.
\end{thm}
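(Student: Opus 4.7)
The plan is to prove Theorem \ref{thm:23simple} by case analysis via the Classification of Finite Simple Groups. The strategy is to exhibit, for each simple group $S$ covered by the theorem, a character $\alpha\in\Irr(B_0(S))$ with $\alpha(1)>2$ that is $\Aut(S)$-invariant and extends to $\Aut(S)$ (and hence to any intermediate almost simple $G$), together with a further $\beta\in\Irr(B_0(G))$ of different degree whose kernel does not contain $S$.

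For $S=\fA_n$ with $n\geq 5$, the irreducible character of $\fS_n$ indexed by the partition $(n-1,1)$ has degree $n-1$, restricts irreducibly to $\fA_n$ (the partition is not self-conjugate), and lies in the principal $p$-block for $p\in\{2,3\}$ by the Nakayama ``same $p$-core'' rule once $n$ is large enough. It extends to any $\fA_n\leq G\leq\fS_n$, and a second character of different degree in $B_0(G)$ (for example, the one labelled $(n-2,2)$ when $n\geq 6$) provides $\beta$. The cases $n\in\{5,6\}$ are handled by direct inspection. For the sporadic simple groups and the Tits group ${}^2F_4(2)'$, the conclusion is verified by inspecting the character tables in the ATLAS and in the GAP character table library.

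For $S=S(q)$ of Lie type in defining characteristic ($p\mid q$), the Steinberg character is the unique irreducible of $p$-defect zero, so \emph{every} other irreducible character of $S$ lies in $B_0(S)$. We may then pick $\alpha$ as a suitable low-degree character known to be $\Aut(S)$-invariant and to extend to $\Aut(S)$ by the work of Cabanes--Sp\"ath (for instance, the natural-module character for classical groups, or a small constituent of a Deligne--Lusztig induction for exceptional groups), and $\beta$ as a different-degree character from the abundant pool in $B_0(G)$. The case $S=\PSL_2(3^k)$ with $k\geq 2$ is excluded from the statement and covered separately by Lemma \ref{psl}.

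For $S$ of Lie type in non-defining characteristic ($p\nmid q$), letting $d$ be the multiplicative order of $q$ modulo $p$, the results of Brou\'e--Malle--Michel and Cabanes--Enguehard imply that $B_0(S)$ contains a nontrivial $d$-Harish-Chandra series of unipotent characters; these extend to $\Aut(S)$ by work of Malle and Cabanes--Sp\"ath, and two such unipotent characters of distinct degrees $>2$ give $\alpha$ and $\beta$. The principal obstacle will be the small-rank or small-$q$ situation, in particular $\PSL_2(q)$, $\PSL_3(q)$, $\PSU_3(q)$, and the very twisted groups ${}^2B_2(q)$ and ${}^2G_2(q)$, where the unipotent portion of $B_0(S)$ is small; in these cases one must analyze the generic character tables directly, together with explicit information on field- and graph-automorphism actions, to check both the block membership and the required extendibility.
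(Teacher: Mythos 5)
Your high-level plan (CFSG case analysis, GAP for sporadics and the Tits group, unipotent characters for Lie type) does match the paper's overall strategy, but the concrete choices you make for $\fA_n$ are wrong, and this is a genuine gap rather than a missing detail. The Nakayama conjecture says block membership is governed by the $p$-core, and whether $(n-1,1)$ has the same $p$-core as $(n)$ depends on $n\bmod p$, not on $n$ being ``large enough.'' For instance, with $p=2$ and $n$ odd, the $2$-core of $(n-1,1)$ is $(2,1)$ while that of $(n)$ is $(1)$, so $\chi^{(n-1,1)}\notin\Irr(B_0(\fS_n))$ and its restriction to $\fA_n$ is not in $B_0(\fA_n)$ either; similarly, with $p=3$ and $n\equiv 0\pmod 3$ the $3$-core of $(n-2,2)$ is $(4,2)$ rather than $\varnothing$, so your proposed $\beta$ also fails to lie in the principal block. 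The paper avoids exactly this pitfall by building the partitions $\lambda_j$ through the $p$-core tower with $T_0(\lambda_j)$ \emph{forced} to equal $c_p((n))$, which guarantees membership in $B_0(\fS_n)$; any fix of your argument needs comparable control of the $p$-core.

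Beyond the alternating-group gap, the Lie type portion of your proposal is a plan rather than an argument. In defining characteristic, ``the natural-module character'' is not a well-defined complex irreducible character, and the real work is precisely in the small-rank cases $\PSL_2(q)$, $\PSL_3^\epsilon(q)$, $\PSp_4(q)$, $\tw{2}\type{B}_2(q)$, $\tw{2}\type{G}_2(q)$ that you acknowledge but do not resolve. In non-defining characteristic, it is not enough that a unipotent character extends to $\Aut(S)$; one must produce an extension landing in $B_0(\Aut(S))$, which requires its own argument (the paper's Lemma \ref{lem:unipext}, with a separate treatment of $\type{D}_4(q)$ where the outer group is nonabelian), and then supply a second character $\beta$ of a different degree, which the paper obtains from the Steinberg character together with external results rather than from a second unipotent character. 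As written, your proposal defers essentially all of the content of the theorem to unverified claims.
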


\begin{thm}\label{thm:2simple}
Let $p=2$ and let $G$ be almost simple with socle $S$. 
If $|\mathrm{cd}(B_0(G))|=3$, then $S$ is $\PSL_2(q)$ for some $q$.  In particular, in such a case, if $K$ is a perfect central extension of $S$,
then the Sylow $2$-subgroups of $K$ are metabelian.
\end{thm}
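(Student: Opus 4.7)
The plan is to proceed by contradiction: assume $|\cd(B_0(G))|=3$ and $S\not\cong\PSL_2(q)$ for every prime power $q$, and aim to exhibit a fourth principal-block character degree of $G$. Applying Theorem \ref{thm:23simple} with $p=2$ (where no exceptional hypothesis arises), we obtain $\alpha\in\Irr(B_0(S))$ with $\alpha(1)>2$ extending to $\tilde\alpha\in\Irr(B_0(G))$, together with $\beta\in\Irr(B_0(G))$ satisfying $\beta(1)\neq\alpha(1)$ and $S\not\subseteq\ker{\beta}$. Since $S$ is perfect, $S\leq[G,G]$, and so $\beta$ cannot be linear; thus $\{1,\alpha(1),\beta(1)\}\subseteq\cd(B_0(G))$ already contains three distinct degrees, which by hypothesis exhaust $\cd(B_0(G))$.

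The main step is to force a fourth degree. A first source is Lemma \ref{gallagher-rizo} applied to $\tilde\alpha$: every $\mu\in\Irr(B_0(G/S))$ with $\mu(1)>1$ yields $\mu\tilde\alpha\in\Irr(B_0(G))$ of degree $\mu(1)\alpha(1)>\alpha(1)$, which under the standing hypothesis must equal $\beta(1)$; this severely restricts $\cd(B_0(G/S))$. The crucial remaining task is, for each $S\not\cong\PSL_2(q)$, to exhibit an additional $\gamma\in\Irr(B_0(G))$ (arising either from $\Irr(B_0(S))$ directly or from a suitable extension to the almost-simple overgroup) whose degree falls outside $\{1,\alpha(1),\beta(1)\}$. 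I would carry this out case-by-case along the classification of finite simple groups: for sporadic and alternating groups the needed degrees are read off from known principal-block character tables; for groups of Lie type in characteristic $2$, the Steinberg character together with selected unipotent characters supply enough distinct degrees. I expect the main obstacle to be Lie-type groups in odd characteristic, where one must combine Jordan decomposition of characters with Brauer's Third Main Theorem and results of Cabanes--Enguehard on $2$-block distribution, together with explicit small-rank computations, to locate enough principal-block characters. The family $\PSL_2(q)$ emerges as the sole exception, reflecting its unusually sparse set of principal $2$-block character degrees.

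Finally, once $S\cong\PSL_2(q)$ has been forced, the metabelian conclusion follows from classical facts. For $q$ even, $\SL_2(q)=\PSL_2(q)$ has elementary abelian Sylow $2$-subgroups, and the small Schur multiplier exception at $q=4$ is handled directly by observing that the Schur cover $\SL_2(5)$ has Sylow $2$-subgroup $Q_8$. For $q$ odd, the Schur cover is $\SL_2(q)$ whose Sylow $2$-subgroups are generalized quaternion, and the exceptional multiplier at $q=9$ gives $6{\cdot}A_6$ with Sylow $2$-subgroup $Q_{16}$. Elementary abelian groups are abelian, and generalized quaternion groups have a cyclic derived subgroup of index $2$, so both families are metabelian. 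Since any perfect central extension $K$ of $S$ is a central quotient of the Schur cover, its Sylow $2$-subgroup is a central quotient of a metabelian group, and hence itself metabelian.
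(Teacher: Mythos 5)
Your strategic framing matches the paper's: exhibit three distinct degrees $\{1,\alpha(1),\beta(1)\}$ in $\cd(B_0(G))$ and then produce a fourth whenever $S\not\cong\PSL_2(q)$. The closing metabelian argument is also correct: Sylow $2$-subgroups of $\SL_2(q)$ are elementary abelian ($q$ even) or generalized quaternion ($q$ odd), the exceptional covers at $q=4,9$ yield $Q_8$ and $Q_{16}$, generalized quaternion groups are metabelian (their derived subgroup is cyclic of index $4$, not $2$ as you wrote, though the conclusion stands), and metabelianness passes to the Sylow $2$-subgroup of any central quotient of the Schur cover.

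The genuine gap is that you defer the substance of the theorem. Producing a fourth principal-block degree for every simple $S\not\cong\PSL_2(q)$ and every almost simple $G$ with that socle \emph{is} the content of the statement, and ``I would carry this out case-by-case along the classification'' together with a vague appeal to Jordan decomposition and Cabanes--Enguehard is a plan, not a proof. The paper's argument for this step spans Propositions \ref{prop:sporadicetc}, \ref{prop:2lietypenondef}, \ref{prop:Lietypedefchar}, Corollary \ref{cor:boundunipdegsB0}, and the $\fA_n$ corollaries of Section~4: it invokes the principal-block Brauer Height Zero theorem \cite{MN21} to get an even degree, locates odd-degree unipotent characters in $B_0(S)$ following \cite{RSV20}, gives ad hoc treatments of $\PSL_3^\epsilon(q)$, $\type{D}_4(q)$, $\type{G}_2(3^f)$, and $\tw{2}\type{G}_2(3^f)$ in odd characteristic, carries out explicit unipotent and semisimple degree computations for $\PSL_2(q)$, $\PSL_3^\epsilon(q)$, $\PSp_4(q)$, $\tw{2}\type{B}_2(q)$ in characteristic $2$, checks sporadics and exceptional-multiplier groups in GAP, and uses Olsson's $p$-core tower combinatorics for $\fA_n$. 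None of this is automatic, and your Gallagher--Rizo observation does not help locate the fourth degree, since $G/S\leq\Out(S)$ is usually abelian and so $\Irr(B_0(G/S))$ typically contributes only linear characters.
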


\begin{thm}\label{thm:3simple}
Let $p = 3$ and let $S$ be a finite nonabelian simple group of order divisible by $3$.  Let $G$ be an almost simple group with socle $S$.  If $G$ has nonabelian Sylow $3$-subgroups, then $|\mathrm{cd}(B_0(G))|>3$.  
\end{thm}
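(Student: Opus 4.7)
The plan is to combine Theorem \ref{thm:23simple} with the principal-block case of the Brauer Height Zero Conjecture (proved by Malle and Navarro in \cite{MN21}), together with a separate treatment of the family $S=\PSL_2(3^k)$.

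\textbf{Main case: $S\ne\PSL_2(3^k)$.} I would apply Theorem \ref{thm:23simple} to produce $\hat\alpha\in\Irr(B_0(G))$ extending $\alpha\in\Irr(B_0(S))$ with $\alpha(1)>2$, and $\beta\in\Irr(B_0(G))$ with $\beta(1)\ne\alpha(1)$ and $S\not\subseteq\ker\beta$. Since $S$ is perfect, the condition $S\not\subseteq\ker\beta$ forces $\beta$ to be non-linear (otherwise $G/\ker\beta$ would be cyclic and would contain the image of $S$), so $\beta(1)>1$, and consequently $\{1,\alpha(1),\beta(1)\}\subseteq\cd(B_0(G))$ already has three distinct elements. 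To obtain a fourth I would invoke \cite{MN21}: because $\mathrm{Syl}_3(G)$ is nonabelian, there exists $\chi\in\Irr(B_0(G))$ with $3\mid\chi(1)$. Provided $\alpha(1)$ and $\beta(1)$ can be chosen coprime to $3$, the value $\chi(1)$ is automatically distinct from the three previous degrees and we are done.

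\textbf{The family $S=\PSL_2(3^k)$.} Since $\mathrm{Syl}_3(\PSL_2(3^k))$ is elementary abelian, the hypothesis on $G$ forces $G/S$ to contain a field automorphism of order $3$, hence $3\mid k$ and in particular $k\ge3$. Lemma \ref{psl} then yields $\gamma\in\Irr(B_0(G))$ with $\gamma_S=a\alpha$ for some $\alpha\in\Irr(B_0(S))$ of degree $\ge13$ coprime to $3$ and $a\in\{1,2\}$. Combining $\gamma(1)$ with the trivial character, a second $3'$-degree in $\cd(B_0(G))$ coming from the explicit character table of $\PSL_2(3^k)$ (all non-Steinberg irreducibles, of degrees $3^k\pm1$ and $(3^k\pm1)/2$, lie in $B_0(S)$ and either extend or induce irreducibly to $G$), and a degree divisible by $3$ provided once again by \cite{MN21}, yields at least four distinct values in $\cd(B_0(G))$.

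The main obstacle I expect is the coprimality claim used in the first part: that the characters $\alpha$ and $\beta$ produced by Theorem \ref{thm:23simple} can be arranged to have degrees coprime to $3$. This should be read off from the proof of that theorem, since the natural extendable character of $B_0(S)$ used there is typically of height zero, and $\beta$ is built from a $p'$-degree character via the induction or tensor-induction constructions of Lemmas \ref{induction} and \ref{tensor}. Verifying this uniformly across alternating and sporadic groups and across groups of Lie type (in and outside defining characteristic $3$) is where most of the labour lies; the defining-characteristic-$3$ case looks most delicate, as the supply of height-zero characters in the principal block is then essentially limited to the semisimple characters and must be matched against extendability to $G$.
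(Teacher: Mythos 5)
Your proposal takes a genuinely different route from the paper, but as written it has a real gap at exactly the step you flag as ``where most of the labour lies,'' and that gap is not repairable in the way you hope.

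The paper does not derive Theorem~\ref{thm:3simple} from Theorem~\ref{thm:23simple}. Instead, it proves Theorem~\ref{thm:3simple} directly and independently, case by case, by exhibiting (at least) four irreducible characters of $B_0(G)$ with pairwise distinct degrees: for sporadic groups and exceptional Schur multipliers this is a \texttt{GAP} check (Proposition~\ref{prop:sporadicetc}); for alternating groups the degrees come from explicit partitions ($(n)$, $(n-3,1^3)$, $(n-3,3)$ plus the trivial one); for groups of Lie type in non-defining characteristic it is Proposition~\ref{prop:3lietypenondef}, which usually produces unipotent characters of distinct degree via Corollary~\ref{cor:boundunipdegsB0} and supplements with carefully chosen Lusztig-series characters in the low-rank cases; and for defining characteristic $3$ it is Proposition~\ref{prop:Lietypedefchar}.

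The gap in your approach is the coprimality claim. Theorem~\ref{thm:23simple} only guarantees $\alpha(1)>2$ and $\beta(1)\ne\alpha(1)$ with $S\not\subseteq\ker\beta$; it says nothing about $\alpha(1)$ or $\beta(1)$ being prime to $p$, and indeed the reduction theorem of Section~\ref{sec: reduction} explicitly treats the two subcases ``$p\nmid\alpha(1)$'' and ``$p\mid\alpha(1)$'' separately, precisely because no coprimality is built in. In the proofs the paper supplies, $\alpha$ is the Steinberg character in non-defining characteristic (so $\alpha(1)$ is indeed prime to $3$ there), but $\beta$ is only required to be a non-trivial, non-Steinberg member of $B_0(S)$ and may well have degree divisible by $3$. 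When $\beta(1)$ is divisible by $3$, the character $\chi$ of $3$-divisible degree furnished by \cite{MN21} is not automatically distinct in degree from $\beta$ --- it could literally be $\beta$ --- so you cannot conclude $|\cd(B_0(G))|>3$. The situation is worse in defining characteristic $3$: the Steinberg is excluded from the principal block, and several of the extendable characters used in the proof of Theorem~\ref{thm:23simple} in that setting (e.g. the unipotent character of degree $q(q+\epsilon)$ for $\PSL_3^\epsilon(q)$) have degree divisible by $3$. So the uniform coprimality you need is false in general, not merely unverified.

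Your $\PSL_2(3^k)$ analysis is closer to the paper's (Proposition~\ref{prop:Lietypedefchar} also produces characters of degree $1$, of degree dividing $q+\eta$ or $\tfrac12(q+\eta)$, of degree a multiple of $q-\eta$ still prime to $3$, and one of degree divisible by $3$ via \cite{MN21}), and your observation that the hypothesis forces $3\mid k$ and hence $k\ge3$ is correct. But be careful with Lemma~\ref{psl}: it produces $\gamma\in\Irr(B_0(T))$ for $S\le T\le\Aut(S)$, so one still has to pass from $T$ to $G$ to conclude. More importantly, you should not rely on \cite{MN21} as a black box here and then also invoke ``extends or induces irreducibly'' for arbitrary characters of $\PSL_2(3^k)$: the paper instead tracks field-automorphism invariance explicitly to produce a character of $3'$-degree in $B_0(G)$ above one of the degree-$(q-\eta)$ characters, which is what guarantees the fourth distinct degree. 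Overall, to make your two-theorem strategy rigorous you would have to strengthen Theorem~\ref{thm:23simple} to assert coprimality of both $\alpha(1)$ and $\beta(1)$ to $p$, and that strengthened statement is simply not true; the direct case analysis is unavoidable.
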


\subsection{The reduction theorem}

Next we prove our main result assuming Theorems \ref{thm:23simple}, \ref{thm:2simple}, and \ref{thm:3simple}, whose proofs will be delayed to sections 3 and 4. For this reduction theorem, we make use of the following fact: the unique integral solution for $xy=x^y$ with $x, y>1$ is $x=y=2$. Furthermore if $a, x, y$ are positive integers and $a, x>1$ then $xy<ax^y$.



\begin{thm}
Let $G$ be a finite group, $p$ a prime and assume $\cd(B_0(G))=\{1, m, n\}$. Then the Sylow $p$-subgroups of $G$ have derived length at most 3.
\end{thm}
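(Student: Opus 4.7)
The plan is to argue by contradiction with $G$ a minimal counterexample: $\cd(B_0(G))=\{1,m,n\}$ but $\dl(P)\geq 4$ for $P\in\Syl_p(G)$. After the standard reduction to $\oh{p'}{G}=1$ -- using that $\cd(B_0(\cdot))$ and the Sylow $p$-subgroup descend to $G/\oh{p'}{G}$ -- I analyze the layer $\E{G}$.

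If $\E{G}=1$, then under the reduction one has $F^*(G)=\oh{p}{G}$, so $\cent{G}{\oh{p}{G}}\subseteq\oh{p}{G}$. Lemma \ref{uniqueblock} then forces $G$ to have a unique $p$-block, whence $\cd(G)=\cd(B_0(G))=\{1,m,n\}$. By the classical theorem of Isaacs--Passman--Manz this forces $G$ to be solvable, and by the known Isaacs--Seitz-type bound for solvable groups with three character degrees we obtain $\dl(G)\leq 3$, contradicting $\dl(P)\geq 4$.

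The main case is $\E{G}\neq 1$. Choose a non-abelian minimal normal subgroup $N=S_1\times\cdots\times S_t$ of $G$ with $S_i\cong S$ simple (necessarily $p\mid|S|$ since $\oh{p'}{G}=1$) and set $H=\norm{G}{S_1}/\cent{G}{S_1}$, an almost simple group with socle $S$. Theorem \ref{thm:23simple} supplies $\alpha\in\Irr(B_0(S))$ with $\alpha(1)>2$ extending to $\hat\alpha\in\Irr(B_0(H))$, together with $\beta\in\Irr(B_0(H))$ satisfying $\beta(1)\neq\alpha(1)$ and $S\not\subseteq\ker\beta$. Lemma \ref{tensor} then produces a tensor-induced character of $B_0(G)$ of degree $\alpha(1)^t$, and Lemma \ref{induction} produces an induced character of $B_0(G)$ of degree $t\beta(1)$. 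Together with the trivial character and the Gallagher--Rizo pullbacks from $B_0(G/N)$ supplied by Lemma \ref{gallagher-rizo}, this yields four distinct elements of $\cd(B_0(G))$ unless one of a short list of numerical coincidences (chiefly $\alpha(1)^t=t\beta(1)$) occurs. The numerical remark just before the theorem is tailored for exactly these coincidences: $xy=x^y$ with $x,y>1$ integers forces $x=y=2$, and $xy<ax^y$ for $a,x>1$ closes off the rest. The specialized almost simple statements then finish the small cases: Theorem \ref{thm:2simple} pins down $S=\PSL_2(q)$ with metabelian perfect covers when $p=2$, so $\dl(P)\leq 3$; Theorem \ref{thm:3simple} forces abelian Sylow $3$-subgroups in the almost simple quotient when $p=3$; and the exception $S=\PSL_2(3^k)$ excluded from Theorem \ref{thm:23simple} is treated with Lemma \ref{psl} in place of Theorem \ref{thm:23simple}.

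The main obstacle I expect is the bookkeeping in Case B: after constructing the tensor-induced degree $\alpha(1)^t$, the induced degree $t\beta(1)$, and their Gallagher--Rizo pullback multiples, I must verify pairwise distinctness against $\{1,m,n\}$ and isolate the boundary cases $t=1$, $t=\alpha(1)=2$, and $\beta(1)=\alpha(1)$ by hand -- this is precisely where the pre-theorem numerical identities are the key technical lever. Translating the simple-group conclusions (metabelian perfect covers, abelian Sylow $3$) back to the full group $G$ through the extensions $1\to N\to G\to G/N\to 1$ and the action on $S_1,\ldots,S_t$ is then a routine application of the standard inequality $\dl(P)\leq\dl(P\cap N)+\dl(P/P\cap N)$.
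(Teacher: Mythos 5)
Your high-level strategy coincides with the paper's: reduce to $\oh{p'}G=1$, split on $\E G$, handle $\E G=1$ via Lemma \ref{uniqueblock} and a Taketa-type bound, and otherwise use the simple-group input (Theorems \ref{thm:23simple}, \ref{thm:2simple}, \ref{thm:3simple}) together with Lemmas \ref{gallagher-rizo}--\ref{tensor} and a numerical analysis of the degrees produced. But the plan as written has two genuine gaps.

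First, you invoke Theorem \ref{thm:23simple} (and later Theorems \ref{thm:2simple}, \ref{thm:3simple}) without having reduced to $p\in\{2,3\}$. Those statements are only formulated for $p=2,3$, and getting there is not just bookkeeping. The paper's Step 1 first disposes of the case where $p$ divides both $m$ and $n$ (via \cite[Corollary 3]{isaacs-smith}, which makes $G/\oh{p'}G$ a $p$-group) and the case where $p$ divides neither $m$ nor $n$ (via the principal block Height Zero theorem of \cite{MN21}), so that exactly one of $m,n$ -- say $m$ -- is divisible by $p$; and then it invokes the main result of \cite{GRSS} to force $p$-solvability when $p>3$. Without this chain of reductions you have no access to the simple-group theorems you rely on, and the $p$-parity of $m$ versus $n$ -- which is what drives the later numerical contradictions -- is not pinned down.

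Second, after $\oh{p'}G=1$ and $\E G\neq 1$, you ``choose a non-abelian minimal normal subgroup $N=S_1\times\cdots\times S_t$.'' Such a subgroup need not exist: a component $K$ of $G$ may have $\zent{K}\neq 1$, in which case $\zent{\E G}$ is a nontrivial normal $p$-subgroup of $G$ and every minimal normal subgroup of $G$ inside $\zent{\E G}$ is abelian; there is then no guarantee of a nonabelian minimal normal subgroup (think of $\SL_2(5)$ at $p=2$). The paper sidesteps this by working with the normal closure $N$ of a component $K$ and passing to $\bar G=G/\zent N$, where $\bar N$ \emph{is} a direct product of simple groups; Step 3 verifies that $\cd(B_0(\bar G))$ is still $\{1,m,n\}$, and the whole point of Steps 4--6 is to pull the conclusion $\bar K\normal\bar G$ back to $K\normal G$ before Step 7 forms the almost simple quotient $G/\cent GK$. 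Your plan needs this extra layer; otherwise you cannot even start the $t>1$ analysis in the cases where the layer is not centerless.

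A third, smaller point: Lemma \ref{tensor} requires $\alpha(1)$ coprime to $p$, but the $\alpha$ supplied by Theorem \ref{thm:23simple} may have $p\mid\alpha(1)$. The paper handles this in Step 6 by splitting into the two cases and, when $p\mid\alpha(1)$, using \cite[Corollary 10.5]{N18} together with Gallagher and \cite[Theorem 9.4]{N98} to produce the needed character, rather than Lemma \ref{tensor} directly. Your phrase ``Lemma \ref{tensor} then produces a tensor-induced character of $B_0(G)$ of degree $\alpha(1)^t$'' glosses over this. You flag the numerical bookkeeping as the main obstacle, which is fair, but the case split on $p\mid\alpha(1)$ is a necessary precursor, not part of the bookkeeping itself.
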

 \begin{proof}
 We proceed by induction on $|G|$.
 
 \medskip
 {\it{Step 1: We may assume $p=2,3$, $p$ divides $m$ but not $n$ and that $\oh{p'}{G}=1$.}}
\medskip

If $p$ divides $n$ and $m$ then by \cite[Corollary 3]{isaacs-smith} we have $G/\oh{p'}{G}$ is a $p$-group. By \cite[Theorem 10.20]{N98}, $\Irr(B_0(G))=\Irr(G/\oh{p'}{G})$, so $G/\oh{p'}G$ has 3 character degrees, and then the result holds by \cite[Theorem 12.15]{Is}. If $p$ does not divide $n$ and $m$ then the result follows by the main theorem of \cite{MN21}. Hence we assume $p$ divides $m$ but not $n$. If $p>3$ then by the main result of \cite{GRSS} we have $G$ is $p$-solvable. Again, by \cite[Theorem 10.20]{N98},
$\Irr(G/\oh{p'}G)=\Irr(B_0(G))$ and then $G/\oh{p'}G$ has at most 3 degrees and we are done by \cite[Theorem 12.15]{Is}. Hence we assume $p=2,3$. Also, using $\Irr(B_0(G/\oh{p'}G))=\Irr(B_0(G))$ (by \cite[Theorem 9.9(c)]{N98}) and arguing by induction we may assume $ \oh{p'}G=1$.

\medskip
{\it{Step 2: $G$ has a nontrivial component $K$ with $|K/\zent K|$ divisible by $p$.}}
\medskip

Let $\E G$ be the layer of $G$. If $\E G=1$ then ${\bf{F}}^*(G)=\F{{G}}=\oh p G$ and then $\cent G {\oh p G}\sbs\oh p G$ by \cite[(31.13)]{aschbacher}. Then $G$ has a unique block by Lemma \ref{uniqueblock}, and we are done by \cite[Theorem 12.15]{Is}. We may assume that $\E G>1$. Let $K$ be a component of $G$ and let $N$ be the normal closure of $K$ in $G$. By \cite[(6.5.3)]{kurzweil}, $\prod_{g\in G}K^g\leq G$, so $N=\prod_{g\in G}K^g$. If $|K|$ is not divisible by $p$ then $|N|$ is not divisible by $p$, contradicting $\oh{p'}G=1$ from Step 1, so $K$ has order divisible by $p$. By \cite[(33.12)]{aschbacher} we have that $K/\zent{K}$ is also divisible by $p$. 

\medskip
{\it{Step 3: If $N$ is the normal closure of $K$ in $G$ then $N/\zent N$ is a direct product of isomorphic copies of $K/\zent K$ and $\cd(B_0(G/\zent N))=\{1, m, n\}$.}}
\medskip

By \cite[(6.5.3)]{kurzweil}, $N$ is a central product of $G$-conjugates of $K$. Write $\bar{G}=G/\zent N$. Then by \cite[(1.6.7)]{kurzweil}, $N/\zent{N}=\bar{N}=\prod \bar{K^g}=\bar{K}_1\times\dots\times\bar{K}_t$ is a direct product of non-abelian simple groups $\bar{K_i}\cong K/\zent K$, and the $\bar{K}_i$'s are transitively permuted by $\bar{G}$. We write $\bar{K_1}=\bar{K}$. If $|\cd(B_0(\bar{G}))|<3$ then by the main result of \cite{mar21} we have $\bar{G}$ is $p$-solvable, which is impossible, so $\cd(B_0(\bar{G}))=\{1, n, m\}$.  

\medskip
{\it{Step 4: If $p=3$ then $\bar{K}\ncong \PSL_2(9)$}}
\medskip

Let $\bar{H}/\bar{C}=\norm{\bar G}{\bar K}/\cent{\bar G}{\bar K}$. There is a natural isomorphism $\bar{K}\cong\bar{K}\bar{C}/\bar{C}\sbs\bar{H}/\bar{C}$ and we view $\bar{H}/\bar{K}\sbs\Aut(\bar{K})$. If $\bar{K}\cong\PSL_2(9)$ and $p=3$ then it is easy to check in \cite{GAP} that for any $\bar K\leq\bar T\leq \Aut(\bar K)$ there are two characters $\alpha,\beta\in\Irr(B_0(\bar{T}))$ that do not contain $\bar{K}$ in their kernel with $\alpha(1)\neq\beta(1)$ and both $\alpha(1)$ and $\beta(1)$ are not divisible by $p$. If $\bar{T}=\bar{H}/\bar{C}$, then by Lemma \ref{induction} we have that $\alpha^{\bar{G}},\beta^{\bar{G}}\in\Irr(B_0(\bar G))$ and $\alpha^{\bar{G}}(1)=t\alpha(1)$, $\beta^{\bar{G}}(1)=t\beta(1)$, (both $p'$ or both divisible by $p$, depending on $t$). This is a contradiction, so $\bar K$ can not be isomorphic to $\PSL_2(9)$.

\medskip

{\it{Step 5: If $p=3$ and $\bar K\cong\PSL_2(3^k)$ for $k\geq 3$ then we may assume $\bar{K}\normal\bar{G}$.}}
\medskip

Let $\alpha,\gamma$ and $a\in\{1,2\}$ be as in Lemma \ref{psl}. Since $\bar{K}$ has abelian Sylow 3-subgroups, we know that there exists $\beta\in\Irr(B_0(\bar{H}))$ with $\beta(1)\neq\gamma(1)$ and $\bar K\not\subseteq\ker\beta$ by \cite[Theorem 2.2]{mar21}. Then we can argue as before with Lemma \ref{induction}, and we get that $\gamma^{\bar G},\beta^{\bar G}\in\Irr(B_0(\bar{G}))$, so $t\gamma(1), t\beta(1)\in\cd(B_0(\bar{G}))$. Then necessarily $n=\gamma^{\bar G}(1)=t\gamma(1)=ta\alpha(1)$, and $m=t\beta(1)$. Also by \cite[Lemma 4.4]{mar21} there exists some $1\leq e\leq a^t\leq 2^t$ not divisible by 3 such that $n=e\alpha(1)^t$. Thus $ta\alpha(1)=e\alpha(1)^t$. It is easy to see that the above equality has no integral solutions verifying $\alpha(1)\geq 13$ and $t>1$. 

\medskip
{\it{Step 6: In the remaining cases, we may also assume $\bar{K}\normal \bar{G}$.}}
\medskip

Assume by way of contradiction that $t>1$. Let $\alpha\in\Irr(B_0(\bar{K}))$ with $\alpha(1)>2$, and $\beta\in\Irr(B_0(\bar{H}/\bar{C}))$ be as in Theorem \ref{thm:23simple}. Let $\hat\alpha\in\Irr(B_0(\bar{H}/\bar{C}))$ be an extension of $\alpha$. By Lemma \ref{induction}, $\hat\alpha^{\bar{G}},\hat\beta^{\bar{G}}\in\Irr(B_0(\bar{G}))$. Hence $t\alpha(1), t\beta(1)\in\cd(B_0(G))$. Since $\alpha(1)\neq\beta(1)$ we have that $t$ is not divisible by $p$ (because $n$ is not divisible by $p$). 

If $\alpha(1)$ is not divisible by $p$, then by Lemma \ref{tensor}, $\hat\alpha^{\otimes \bar{G}}\in\Irr(B_0(\bar{G}))$, so $\hat\alpha^{\otimes\bar{G}}(1)=\alpha(1)^t\in\cd(B_0(G))$. Since $\beta(1)$ is necessarily divisible by $p$, this forces $\alpha(1)^t=t\alpha(1)=n$ and if $t>1$ the only possibilty is $\alpha(1)=2=t$, a contradiction with $\alpha(1)>2$.

We are left with the case that $p$ divides $\alpha(1)$, so $\beta(1)$ is $p'$. By \cite[Corollary 10.5]{N18} $\chi=\hat\alpha^{\otimes\bar{G}}\in\Irr(\bar{G})$ extends $\eta=\alpha^{x_1}\times\dots\times\alpha^{x_t}\in\Irr(B_0(\bar{N}))$, where $\{x_1,\dots, x_t\}$ is a transversal of $\bar{H}$ in $\bar{G}$ (recall that $\eta$ is $G$-invariant by Lemma \ref{invariant}). By \cite[Theorem 9.4]{N98} there exists some $\psi\in\Irr(B_0(\bar{G}))$ over $\eta$. By Gallagher's theorem, $\psi=\mu\chi$ for some $\mu\in\Irr(\bar{G}/\bar{N})$. Since $p$ divides $\alpha(1)$ then $p$ divides $\psi(1)$ and $\psi(1)=m$. Then $\psi(1)=\mu(1)\chi(1)=\mu(1)\alpha(1)^t=m=t\alpha(1)$, but this forces $\mu(1)=1$ and $\alpha(1)=2=t$, which contradicts the fact that $\alpha(1)>2$. This proves the claim.

\medskip
{\it{Step 7: $G/\cent G K$ is almost simple with socle $K\cent G K/\cent G K\cong K/\zent K$.}}
\medskip

By Steps 5 and 6 any component $K$ of $G$ is normal in $G$. Write $C=\cent G K$ and $Z=\zent{K}$. Since $K/Z \cong KC/C\normal G/C$, it suffices to prove that $\cent {G/Z}{K/Z}=C/Z$. Let $xZ\in\cent{G/Z}{K/Z}$. We have that for all $k\in K$ there exists a unique $z_k\in Z$ such that $k^x=kz_k$. Notice that the map $k\mapsto z_k$ is a homomorphism $K\rightarrow Z$. Since $K$ is perfect and $Z$ is abelian, $z_k=1$ for all $k\in K$, so $x\in C$. The reverse inclusion is immediate. 

\medskip
{\it{Final Step.}}
\medskip

 If $|\cd(B_0(G/C))|>3$ then we are done because $\Irr(B_0(G/C))\sbs\Irr(B_0(G))$. Hence we assume $\cd(B_0(G/C))=\{1, m, n\}$ (if $|\cd(B_0(G/C))|<3$ then $G/C$ is $p$-solvable by the main result of \cite{mar21}). Since $m$ is divisible by $p$, by the main result of \cite{kessarmalle} the Sylow $p$-subgroups of $G/C$ are nonabelian. Then by Theorems \ref{thm:2simple} and \ref{thm:3simple} we have $p=2$ and that the Sylow 2-subgroups of $K$ are metabelian. By Theorem \ref{thm:23simple} there is some nontrivial $\eta\in\Irr(B_0(K))$ that extends to $\hat\eta\in\Irr(B_0(G))$. By Lemma \ref{gallagher-rizo}, $\cd(B_0(G/K))\sbs\{1, m/n\}$ and since $m/n\not\in\cd(B_0(G))$ we have $\cd(B_0(G/K))=\{1\}$. In particular, if $P\in\Syl_2(G)$ then $PK/K$ is abelian by the main result of \cite{MN21}, and $P\cap K$ is metabelian, so $\dl(P)\leq 3$.
 \end{proof}
 
 \section{Groups of Lie type}\label{sec: Mandi}
 
 The primary goal of this section is to prove that Theorems \ref{thm:23simple}, \ref{thm:2simple}, and \ref{thm:3simple} hold for simple groups of Lie type.  However, we begin by recording these statements for the sporadic groups and certain ``small" groups of Lie type.

\begin{pro}\label{prop:sporadicetc}
Theorems \ref{thm:23simple}, \ref{thm:2simple}, and \ref{thm:3simple} hold if $S$ is a sporadic simple group, $\tw{2}\type{F}_4(2)'$, 
 or a group of Lie type with exceptional Schur multiplier.  
\end{pro}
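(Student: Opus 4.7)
The plan is to verify each of Theorems \ref{thm:23simple}, \ref{thm:2simple}, and \ref{thm:3simple} by a direct case-by-case inspection of the character tables and block data of the groups listed. The three classes — the $26$ sporadic simple groups, the Tits group $\tw{2}\type{F}_4(2)'$, and the finite list of simple groups of Lie type with exceptional Schur multiplier — together form an explicit finite list, and their ordinary character tables, together with those of the corresponding almost simple extensions and perfect central extensions, are available in the ATLAS and in the \textsf{GAP} character table library. The distribution of characters into $p$-blocks (via Brauer's third main theorem, or directly from the stored block information) and the Sylow structure of the central extensions can be read off in every case, so the statement reduces to a finite set of arithmetic and combinatorial checks.

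Concretely, I would fix $p\in\{2,3\}$ and loop over each $S$ in the list with $p\mid|S|$ and each almost simple $G$ with socle $S$. For Theorem \ref{thm:23simple}, I would scan $\Irr(B_0(S))$ for a character $\alpha$ of degree at least $3$ that is $G$-invariant and extends to some element of $\Irr(B_0(G))$; extendibility can be detected either by inspecting the irreducible constituents of $\alpha^G$ lying in $B_0(G)$, or by the standard extendibility tests available in the character table library. I would then exhibit a second $\beta\in\Irr(B_0(G))$ of degree different from $\alpha(1)$ whose kernel does not contain $S$. For Theorem \ref{thm:2simple} I would tabulate $\cd(B_0(G))$ for the $p=2$ cases and verify that $|\cd(B_0(G))|=3$ occurs only when $S\cong \PSL_2(q)$; for each such $\PSL_2(q)$ with exceptional Schur multiplier, I would check from the ATLAS that every perfect central extension has metabelian Sylow $2$-subgroups. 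For Theorem \ref{thm:3simple}, I would identify the $p=3$ cases for which $G$ has nonabelian Sylow $3$-subgroup and verify directly that $|\cd(B_0(G))|\geq 4$.

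The principal difficulty is organisational rather than conceptual. For the groups with the largest outer automorphism groups or Schur multipliers — such as $\mathrm{Fi}_{22}$ and $\mathrm{Fi}_{24}'$ among the sporadics, and $\Omega_8^+(2)$, $E_6(2)$, and $\tw{2}\type{E}_6(2)$ among the Lie type groups — one must track the action of $\Out(S)$ carefully on blocks and on characters, and in Theorem \ref{thm:23simple} it is essential to ensure that the chosen $\alpha$ is not merely $G$-invariant but really extends to $G$. Since in each of these cases $\Out(S)$ is solvable of small rank, the standard extendibility criteria together with the stored character data suffice to settle this point. Once this bookkeeping is in place, each individual verification is routine, and the three statements then follow simultaneously for the finite list of groups named in the proposition.
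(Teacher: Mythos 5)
Your proposal takes essentially the same approach as the paper, which simply notes that the claim can be verified using GAP and the GAP Character Table Library, citing \cite[Table 6.1.3]{GLS} for the list of groups of Lie type with exceptional Schur multiplier. You have merely spelled out in more detail what that finite computational verification entails.
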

\begin{proof}

This can be seen using GAP and the GAP Character Table Library \cite{GAP}.  For the list of groups of Lie type with exceptional Schur multipliers, see \cite[Table 6.1.3]{GLS}.  
\end{proof}

Let $q$ be a power of some prime $r$. By a simple group of Lie type defined over $\FF_q$, we mean a finite simple group that can be written $S=H/\zent{H}$, where $H=\bH^F$ is the set of fixed points of a simple, simply connected algebraic group $\bH$ under a Frobenius morphism $F$ endowing $\bH$ with an $\FF_q$-rational structure. 

Let $\wt{S}$ be the group of inner-diagonal automorphisms of $S$.  Then $\Aut(S)=\wt{S}\rtimes D$, where $D$ is an appropriate group of graph and field automorphisms.  (See \cite[Section 2.5]{GLS}.) Further, we may let $\bH\hookrightarrow\wt{\bH}$ be a regular embedding as in \cite[(5.1)]{CE04} and $\wt{H}=\wt{\bH}^F$, such that $\zent{\wt{\bH}}$ is connected, $H\lhd \wt{H}$, and  $\wt{S}=\wt{H}/\zent{\wt{H}}$. 

It is useful to note that the characters of $\wt{H}$ are partitioned into so-called Lusztig series indexed by semisimple characters $s$ of an appropriate dual group $\wt{H}^\ast$. When $s=1$, the characters in this series are called \emph{unipotent} characters, and we will often work with these. 

\subsection{Some Notes on Unipotent Characters and Classical Groups}
Keeping the notation from before, unipotent characters of $\wt{H}$ are irreducible on restriction to $H$ and trivial on $\zent{\wt{H}}$.  So, we may consider unipotent characters as characters of $S, \wt{S}, H$, or $\wt{H}$.  Often, unipotent characters also extend to $\Aut(S)$.  (See \cite[Theorems 2.4 and 2.5]{Malle08}.)  One such unipotent character, the Steinberg character, has degree $|\wt{H}|_r$. For $X\in\{S, \wt{S}, H, \wt{H}\}$, we will denote the Steinberg character of $X$ by $\mathrm{St}_X$. 

The explicit list of unipotent characters and their degrees can be found in \cite[Section 13.9]{carter} for groups of exceptional type. Hence in this section, we primarily focus on the case of classical groups. 
The following will be useful for finding a rough lower bound on the number of character degrees in the principal block in this case. Throughout, we will use the notation $\PSL_n^\epsilon(q)$,  $\epsilon\in\{\pm1\}$, to denote $\PSL_n(q)$ for $\epsilon=1$ and $\PSU_n(q)$ for $\epsilon=-1$, and these are the simple groups of type $\type{A}_{n-1}(q)$ and $\tw{2}\type{A}_{n-1}(q)$, respectively.  Similarly, $\POmega_{2n}^\epsilon(q)$ will denote the simple group of type $\type{D}_n(q)$ for $\epsilon=1$ and type $\tw{2}\type{D}_n(q)$ for $\epsilon=-1$.  For completeness, we recall that $\POmega_{2n+1}(q)$ denotes the simple group of type $\type{B}_n(q)$.

\begin{pro}\label{prop:boundunipdegs}
Let $q$ be a power of some prime $r$. Let $S=\PSL^\epsilon_n(q)$ with $n\geq 2$, $\PSp_{2n}(q)$ with $n\geq 2$, $\POmega_{2n+1}(q)$ with $n\geq 3$, or $\POmega^\epsilon_{2n}(q)$ with $n\geq 4$.  Then there are more than $\frac{n+1}{2}$ unipotent characters of $\wt{S}$ (and of $S$) with distinct degrees that extend to $\Aut(S)$.
\end{pro}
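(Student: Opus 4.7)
The plan is to proceed by cases on the Lie type of $S$, exhibiting in each case more than $\frac{n+1}{2}$ unipotent characters of $\wt{S}$ with pairwise distinct generic-degree polynomials in $q$ that are invariant under the outer-automorphism action on the unipotent set. Malle's extendibility theorem \cite[Theorems 2.4 and 2.5]{Malle08} will then guarantee that each such character extends to $\Aut(S)$, and the irreducibility of the restrictions from $\wt{S}$ to $S$ (recalled just before the statement) gives the same degrees as characters of $S$.

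For $S = \PSL_n^\epsilon(q)$, I would parametrize unipotent characters of $\wt{S}$ by partitions $\lambda \vdash n$. The $q$-hook formula shows that distinct partitions give distinct generic-degree polynomials: either their multisets of hook lengths differ, or the partitions are transpose of one another and the factors $q^{n(\lambda)}$ in the generic degree disagree (since $n(\lambda) + n(\lambda^t) = \binom{n}{2}$, equality forces self-conjugacy). Field automorphisms fix every unipotent character, and since unipotent characters of $\GL_n$ are rational (hence self-dual), they are also fixed by the transpose-inverse graph automorphism, which realises contragredience on representations. All $p(n)$ unipotent characters are therefore $\Aut(S)$-invariant, and the bound $p(n) > \frac{n+1}{2}$ for $n \geq 2$ handles this case.

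For $S = \PSp_{2n}(q)$ with $n\geq 2$ and $S = \POmega_{2n+1}(q)$ with $n\geq 3$, the Dynkin diagrams of types $C_n$ and $B_n$ admit no non-trivial graph symmetry, so field automorphisms are the only outer symmetries acting on unipotent characters, and they fix them all. Unipotent characters are parametrized by symbols of rank $n$ and appropriate defect; within the principal (unipotent) series alone there are as many as the number of bipartitions of $n$ distinct characters, whose generic degrees are separated using the explicit formulas in \cite[Section 13.8]{carter}. This count vastly exceeds $\frac{n+1}{2}$.

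For $S = \POmega_{2n}^\epsilon(q)$ with $n \geq 4$, the $\epsilon = -$ case is handled exactly as for $B_n$ and $C_n$. In the $\epsilon = +$ case the graph automorphism of $D_n$ swaps the pair of unipotent characters attached to each ``degenerate'' symbol and fixes every other unipotent character. Degenerate symbols are few in number (and occur only for certain $n$), so removing them leaves far more than $\frac{n+1}{2}$ $\Aut(S)$-invariant unipotent characters of distinct degree. The main technical obstacle throughout is the purely combinatorial verification that distinct symbols yield distinct generic-degree polynomials in the $B_n$, $C_n$, and $D_n^\pm$ cases, together with the orbit analysis of the graph automorphism in type $D_n^+$; these are bookkeeping steps based on Lusztig's explicit formulas.
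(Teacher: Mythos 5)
Your overall strategy is genuinely different from the paper's, and in my view it is weaker. The paper does not try to show that many unipotent characters have distinct degrees; instead it exhibits a \emph{specific} family (the two-row partitions $(n-j,j)$ in type $\type{A}$, and hook-like symbols $\binom{j,n-j+1}{0}$, $\binom{j}{n-j}$, $\binom{j,n-j}{\emptyset}$ in types $\type{B}/\type{C}/\type{D}$, together with $1_S$ and $\mathrm{St}_S$) whose degrees are distinguished automatically because their $r$\emph{-parts} are distinct, namely $q^j$ (or $q^j/2$ for $\type{B}/\type{C}$ with $r=2$), read off from Olsson's explicit formulas. This sidesteps the combinatorics of comparing generic-degree polynomials entirely and also makes it easy to check the extension hypotheses of Malle's theorem by inspection of a short, concrete list.

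Your approach instead tries to use most or all unipotent characters and has several genuine gaps. First, the hook-length argument you sketch for $\type{A}_{n-1}$ is logically incomplete: if two partitions share a hook-length multiset they need not be transposes of one another, so your dichotomy does not cover all cases. (The conclusion that distinct partitions give distinct unipotent degree polynomials of $\GL_n(q)$ is true, but requires a different argument.) Second, and more seriously, in types $\type{B}_n$, $\type{C}_n$, $\type{D}_n^\pm$ you explicitly defer the separation of degrees of distinct symbols as ``bookkeeping''; this is in fact the whole content of the statement, and it is not obvious—the generic degree formula for symbols is intricate, and without a mechanism (like Olsson's $r$-part formula) you have no uniform way to produce even $\frac{n+1}{2}$ distinct values. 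Third, your claim that ``the Dynkin diagrams of types $\type{C}_n$ and $\type{B}_n$ admit no non-trivial graph symmetry'' overlooks the exceptional graph automorphism of $\PSp_4(q)$ for $q$ even, which does swap two unipotent characters (of degree $\tfrac{1}{2}q(q^2+1)$); the paper's proof carries a remark noting precisely that its chosen list avoids this exception. Fourth, in type $\type{D}_4^+$ the relevant outer symmetry is triality, not just an order-two graph automorphism, and it permutes more than just degenerate-symbol pairs; your description of the $\type{D}_n^+$ orbit structure is only valid for $n\geq 5$. Finally, Malle's Theorems 2.4 and 2.5 do not assert unconditionally that every $\Aut(S)$-invariant unipotent character extends—there are exceptions (exactly at $\PSp_4$ in characteristic 2 and around triality for $\type{D}_4$) that the paper's explicit list is engineered to dodge, so invoking Malle at the level of generality you do requires more care.
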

\begin{proof}

The unipotent characters of $S$ or of $\wt{S}$  are in bijection with partitions of $n$ in the case of $\PSL_n^\epsilon(q)$ and in bijection with certain ``symbols" in the other cases.  (See, e.g. \cite[Section 13.8]{carter} for details.)  

If $S=\PSL_n^\epsilon(q)$, consider the characters corresponding to the partitions $(n-j, j)$ for $1\leq j\leq \lfloor \frac{n-1}{2}\rfloor$. If $S=\PSp_{2n}(q)$ or $\POmega_{2n+1}(q)$, consider the symbols ${j, n-j+1}\choose{0}$ for $1\leq j\leq \lfloor \frac{n-1}{2}\rfloor$.  In the case of $\POmega_{2n}(q)$ and $\POmega_{2n}^-(q)$, consider the symbols ${j}\choose {n-j}$ and ${j, n-j}\choose{\emptyset}$, respectively, for $1\leq j\leq \lfloor \frac{n-1}{2}\rfloor$. Using \cite[(21), (22)]{Olsson86}, the $r$-part of the degree of the corresponding unipotent characters is $q^{j}$, except in the case $r=2$ and $S=\PSp_{2n}(q)$ or $\POmega_{2n+1}(q)$, in which case it is $q^{j}/2$.  Taking into account the Steinberg and trivial characters, this shows that there are strictly more than $\frac{n-1}{2}+1$ distinct unipotent character degrees.  Further, using \cite[Theorems 2.4 and 2.5]{Malle08}, we see that each of these characters extends to $\Aut(S)$.  (Note that the exception of $\PSp_4(q)$ when $r=2$ and $j=1$ does not actually occur in our list here, as $\lfloor\frac{n-1}{2}\rfloor=0<1$ in this case.)
\end{proof}

\begin{lem}\label{lem:unipext}
Let $S$ be a simple group of Lie type and let $\wt{\chi}$ be a unipotent character of $\wt{S}$ that lies in $B_0(\wt{S})$ and extends to $\Aut(S)$.    Then $B_0(\Aut(S))$ contains an extension of $\wt{\chi}$.
\end{lem}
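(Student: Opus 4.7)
The plan is to combine Gallagher's theorem with the standard fact that principal blocks cover principal blocks. Set $G := \Aut(S)$ and $N := \wt{S}$, so that $N \normal G$. Because $1_G \in \Irr(B_0(G))$ restricts to $1_N \in \Irr(B_0(N))$, the principal block $B_0(G)$ covers $B_0(N)$ (see, for example, \cite[Theorem~9.4]{N98}). Consequently, there is at least one $\phi \in \Irr(B_0(G))$ lying over $\wt{\chi}$.

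Now fix an extension $\chi^{\ast} \in \Irr(G)$ of $\wt{\chi}$, which exists by hypothesis. As $\wt{\chi}$ is thereby $G$-invariant, Gallagher's theorem supplies the bijection
\[
\Irr(G \mid \wt{\chi}) = \{\,\mu \chi^{\ast} : \mu \in \Irr(G/N)\,\},
\]
so $\phi = \mu_{0} \chi^{\ast}$ for some $\mu_{0} \in \Irr(G/N)$. The character $\phi$ is itself an extension of $\wt{\chi}$ if and only if $\mu_{0}(1) = 1$, which holds automatically whenever $G/N$ is abelian. Consulting the structure of $\Out(S)$ (see \cite[Theorem~2.5.12]{GLS}), the quotient $G/N \cong \Out(S)/\mathrm{Outdiag}(S)$ is abelian for every simple group of Lie type $S$, with the sole exception of $S = \POmega_{8}^{+}(q)$, where triality contributes a non-abelian factor isomorphic to $\fS_{3}$.

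The remaining case $S = \POmega_{8}^{+}(q)$ is the main technical obstacle. Here my plan is to argue along the two-step tower $N \normal N_{1} \normal G$, where $N_{1} := N \cdot \Phi$ and $\Phi$ is the cyclic group of field automorphisms. The first step has abelian quotient, and the argument above applied to $N \normal N_{1}$ produces an extension $\chi_{1} \in \Irr(B_{0}(N_{1}))$ of $\wt{\chi}$. For the second step, whose quotient $G/N_{1} \leq \fS_{3}$ is non-abelian, one notes that since $\wt{\chi}$ extends all the way to $G$ by hypothesis, the intermediate character $\chi_{1}$ must be $\fS_{3}$-stable and in fact extend to $G$. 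It then remains to ensure that the Gallagher--covering argument applied to $N_{1} \normal G$ produces a character in $B_{0}(G)$ of degree $\wt{\chi}(1)$, rather than of degree $2\wt{\chi}(1)$ arising from the $2$-dimensional irreducible of $\fS_{3}$. This last point is the delicate one and would be handled by a careful analysis of the triality action on the unipotent characters of $\POmega_{8}^{+}(q)$, perhaps in combination with Malle's constructions in \cite{Malle08} of canonical extensions of unipotent characters; alternatively, a direct verification in GAP may suffice for the $q$-values where the ambiguity actually arises.
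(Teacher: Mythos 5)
Your proposal handles the abelian case exactly as the paper does: Gallagher plus the fact that $B_0(\Aut(S))$ covers $B_0(\wt{S})$ (via \cite[Theorem~9.4]{N98}) forces the character of $B_0(\Aut(S))$ over $\wt\chi$ to be an extension.  The problem is entirely in the $\POmega_8^+(q)$ case, and here your proposal has a genuine gap, which you yourself flag but do not close.

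Beyond being incomplete, the decomposition you choose is likely to make the problem harder rather than easier.  You take $N_1 = \wt{S}\cdot\Phi$ with $\Phi$ the field automorphisms, so that $G/N_1\cong\fS_3$; this puts the nonabelian quotient on top.  Consequently Gallagher applied to $N_1\normal G$ genuinely allows a degree-$2\chi_1(1)$ constituent over $\chi_1$, and you have no mechanism to rule out that this is the only character of $B_0(G)$ over $\chi_1$.  The paper avoids this by choosing the intermediate subgroup the other way around: it takes $Y$ with $Y/\wt{S}$ cyclic of order $3$ (the $A_3$ inside the triality $\fS_3$, times nothing), so that $\Aut(S)/Y\cong C_2\times\langle\text{field}\rangle$ is \emph{abelian}.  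Then the last step is automatic by Gallagher, and the only remaining issue is whether the extension of $\wt\chi$ living in $B_0(Y)$ is the one that extends further.  The paper resolves that bottom step by a short Galois argument: the three Gallagher extensions of $\wt\chi$ to $Y$ differ by the order-$3$ linear characters $\beta,\beta^{-1}$ of $Y/\wt{S}$; a block over $B_0(\wt{S})$ is $B_0(Y)\otimes\gamma$ and hence contains either one or all three of them; unipotent characters are rational-valued (\cite[Corollary~1.12]{lusztig02}) and $\beta$ is not, so in the ``exactly one'' case the extension in $B_0(Y)$ is forced to be the unique rational one, which is then fixed by all of $X$ and hence is precisely the one extending to $\Aut(S)$.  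Nothing in your proposal supplies this ingredient; ``careful analysis of the triality action,'' ``Malle's constructions,'' or ``GAP for the relevant $q$'' are placeholders rather than a proof, and a GAP check cannot cover all $q$.  To repair the argument, flip your tower so the nonabelian part is at the bottom, and then use the rationality of unipotent characters together with Galois-stability of $B_0$ to pin down which extension lies in $B_0(Y)$.
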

\begin{proof}
Recall that $\Aut(S)=\wt{S}\rtimes D$.  First, suppose that $S\neq \type{D}_4(q)$, so that the group $D$ generated by graph and field automorphisms is abelian. (See, e.g. \cite[Theorem 2.5.12]{GLS}.) Since $\wt\chi$ extends to $\Aut(S)$, we have by Gallagher's theorem that every member of $\Irr(\Aut(S))$ above $\wt{\chi}$ is an extension since $D$ is abelian.  In particular, using \cite[Theorem 9.4]{N98}, there exists an extension of $\wt{\chi}$ in $B_0(\Aut(S))$, as stated.

Now suppose for the remainder of the proof that $S=\type{D}_4(q)=\POmega_8^+(q)$. In this case, $D\cong\sym_3\times D_1$, where $D_1$ is a cyclic group of field automorphisms and $\sym_3$ is the symmetric group on 3 letters, generated by graph automorphisms.  Write $X$ for the group such that $X/\wt{S}\cong \sym_3$ and let $Y\lhd X$ such that $Y/\wt{S}$ is cyclic of size 3.   Since $\wt\chi$ extends to $\Aut(S)$, we have $\wt\chi$ extends to three characters $\hat\chi, \beta\hat\chi, \beta^{-1}\hat\chi$ of $Y$, where $\beta$ and $\beta^{-1}$ are the characters of $\Irr(Y/\wt{S})$ of order 3, using Gallagher's theorem.  Now, since $Y/\wt S$ is abelian, any block of $Y$ lying above $B_0(\wt{S})$ is of the form $B_0(Y)\otimes \gamma$ for some $\gamma\in\Irr(Y/\wt S)$.  In particular,  $B_0(Y)$ either contains all three of $\hat\chi, \beta\hat\chi, \beta^{-1}\hat\chi$ or exactly one of these characters. Say $\hat\chi\in B_0(Y)$.
 
Note that for every $\sigma\in\mathrm{Gal}(\Q(e^{2\pi i/|Y|})/\Q)$, we have $\hat\chi^\sigma$ also lies in $B_0(Y)$.  Further, $\wt\chi$ is rational-valued (see \cite[Corollary 1.12]{lusztig02}).    Then since $\chi^\sigma=\chi$, we have $\hat\chi^\sigma\in\{\hat\chi, \beta\hat\chi, \beta^{-1}\hat\chi\}$. So, if $B_0(Y)$ contains just one of the three characters, we know $\hat\chi^\sigma=\hat\chi$.  Since $\beta$ is not rational-valued,  $\hat\chi$ is then the unique character of $Y$ above $\wt\chi$  that is rational-valued.  Let $\alpha\in X$. Then since $\wt\chi$ is $\alpha$-invariant, we see $\hat\chi^\alpha\in\{\hat\chi, \beta\hat\chi, \beta^{-1}\hat\chi\}$.  But note that $\hat\chi^\alpha$ is also rational-valued, and hence $\hat\chi^\alpha=\hat\chi$.  Then $\hat\chi$ must be the extension of $\chi$ to $Y$ that extends all the way to $\Aut(S)$. On the other hand, if $B_0(Y)$ contains all three of $\{\hat\chi, \beta\hat\chi, \beta^{-1}\hat\chi\}$, we may  assume without loss that $\hat\chi$ is the one that extends to $\Aut(S)$.  

In either case, since $D/Y$ is abelian, we know every character of $\Aut(S)$ above $\hat\chi$ is an extension.  In particular,  there is an extension of $\hat\chi$ (and hence of $\wt\chi$) to $\Aut(S)$ that lies in $B_0(\Aut(S))$, again using \cite[Theorem 9.4]{N98}. 
\end{proof}

For $p$ an odd prime and $q$ an integer not divisible by $p$, we write $d_p(q)$ for the order of $q$ modulo $p$.  In what follows, if $p$ is an odd prime not dividing $q$, we will write $e$ to denote $d_p(\epsilon q)$ in the case $S=\PSL_n^\epsilon(q)$ and $d_p(q^2)$ in the case $S=\PSp_{2n}(q)$, $\POmega_{2n+1}(q)$, or $\POmega^\epsilon_{2n}(q)$.

\begin{cor}\label{cor:boundunipdegsB0}
Let $q$ be a power of some prime. Let $S$ be a simple group such that $S=\PSL^\epsilon_n(q)$ with $n\geq 2$, $\PSp_{2n}(q)$ with $n\geq 2$, $\POmega_{2n+1}(q)$ with $n\geq 3$, or $\POmega^\epsilon_{2n}(q)$ with $n\geq 4$.  Let $G$ be almost simple with socle $S$ and let $B_0(G)$ denote the principal $p$-block of $G$ for some prime $p$. 

\begin{enumerate}
\item If $p\mid q$, let $\mathcal{N}:=\frac{n-1}{2}$.  
\item If $p=2$ with $q$ odd or $p$ is odd and $e=1$, then let $\mathcal{N}:= \frac{n+1}{2}$. 
\item If $p$ is odd, $S=\PSL_n^\epsilon(q)$, $n\geq 4$, and $e=2$, then let $\mathcal{N}:=\frac{n+7}{4}$.  
\end{enumerate}
Then $B_0(G)$ contains more than $\mathcal{N}$ characters with distinct degrees obtained as extensions of unipotent characters of $S$.
\end{cor}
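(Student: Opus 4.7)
The plan is to combine Proposition \ref{prop:boundunipdegs} with Lemma \ref{lem:unipext} and the Fong--Srinivasan parameterization of unipotent $p$-blocks in classical groups, then count which of the characters from Proposition \ref{prop:boundunipdegs} actually lie in $B_0(\wt{S})$. Proposition \ref{prop:boundunipdegs} supplies a family $\mathcal{X}$ of more than $\tfrac{n+1}{2}$ unipotent characters of $\wt{S}$ with distinct degrees, each extending to $\Aut(S)$. For each $\wt\chi\in\mathcal{X}$ lying in $B_0(\wt{S})$, Lemma \ref{lem:unipext} delivers an extension in $B_0(\Aut(S))$. To descend to $B_0(G)$ for $S\le G\le\Aut(S)$, one notes that $\eta:=\wt\chi|_S$ extends to $G$ (by restricting the extension to $\Aut(S)$), and then one mimics the argument of Lemma \ref{lem:unipext} with $G$ in place of $\Aut(S)$---invoking Gallagher's theorem together with \cite[Theorem 9.4]{N98}---to produce an extension of $\eta$ in $B_0(G)$. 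Since distinct unipotent characters of $\wt{S}$ yield distinct degrees (and remain distinct after restriction to $S$), the task reduces to counting members of $\mathcal{X}$ that lie in $B_0(\wt{S})$ in each case.

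For Case (i), $p\mid q$: in defining characteristic every ordinary unipotent character of $\wt{S}$ lies in the principal $p$-block---the fact for which the authors credit M.\ Cabanes in the acknowledgments---so all of $\mathcal{X}$ belongs to $B_0(\wt{S})$, yielding more than $\tfrac{n+1}{2}>\tfrac{n-1}{2}=\mathcal{N}$ distinct character degrees in $B_0(G)$. For Case (ii), either $p=2$ with $q$ odd (which forces $d_2(q)=1$ automatically, since $2\mid q-1$) or $p$ is odd with $e=1$ by hypothesis: by the Fong--Srinivasan parameterization of unipotent $p$-blocks via $e$-cores (for $p\nmid q$), when $e=1$ every partition of $n$ has empty $1$-core, so every unipotent character of $\wt{S}$ lies in $B_0(\wt{S})$. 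Again the whole of $\mathcal{X}$ sits in $B_0(\wt{S})$, giving the bound $>\tfrac{n+1}{2}=\mathcal{N}$.

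For Case (iii), $S=\PSL_n^\epsilon(q)$ with $p$ odd and $e=2$: unipotent blocks of $\wt{S}$ are parameterized by $2$-cores of partitions of $n$, and $B_0(\wt{S})$ corresponds to the $2$-core of $(n)$---empty when $n$ is even and $(1)$ when $n$ is odd. A direct domino-removal computation shows that for $n$ even every two-row partition $(n-j,j)$ has empty $2$-core, so all of $\mathcal{X}$ lies in $B_0(\wt{S})$ and the count comfortably exceeds $\tfrac{n+7}{4}$. For $n$ odd, among the partitions $(n-j,j)$ only those with $j$ even meet the $2$-core condition, which by itself produces only $\lfloor(n-1)/4\rfloor+1$ characters; to reach $\mathcal{N}=\tfrac{n+7}{4}$ we augment with the hook unipotent characters indexed by $(n-k,1^k)$ for even $k$, which by an analogous domino-removal computation also have $2$-core $(1)$ and extend to $\Aut(S)$ by \cite[Theorems 2.4, 2.5]{Malle08}. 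The main obstacle will be precisely this $n$-odd subcase: one must verify the $2$-core identities for the hook partitions, confirm the pairwise distinctness of degrees between two-row and hook unipotent characters (comparison of the $q$-parts $q^{n(\lambda)}$ together with a check at small $n$ suffices), and carry out the descent from $B_0(\Aut(S))$ to $B_0(G)$ uniformly in the structure of $G/S$.
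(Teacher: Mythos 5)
Your overall plan — take the family of unipotent characters from Proposition~\ref{prop:boundunipdegs}, decide which of them land in $B_0(\wt{S})$ via the Fong--Srinivasan parametrization, and push to $B_0(G)$ via Lemma~\ref{lem:unipext} and restriction — matches the paper's approach. However there are substantive errors in two of your three cases, and the third is acknowledged as incomplete.

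In Case~(i), your assertion that ``in defining characteristic every ordinary unipotent character of $\wt{S}$ lies in the principal $p$-block'' is false: the Steinberg character has defect zero and is therefore excluded. The correct statement (Dagger--Humphreys; see \cite[Theorems 1.18 and 3.3]{Cab18}) is that $B_0(\wt{S})$ contains all unipotent characters \emph{except} Steinberg, which is precisely why the bound drops to $\mathcal{N}=\tfrac{n-1}{2}$ in this case rather than remaining $\tfrac{n+1}{2}$. Your intermediate claim of ``more than $\tfrac{n+1}{2}$'' degrees in $B_0(G)$ is wrong; you only get more than $\tfrac{n-1}{2}$, which happens to still suffice, but the reasoning needs fixing.

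In Case~(ii), for odd $p$ with $e=1$ your claim that every unipotent character of $\wt{S}$ lies in $B_0(\wt{S})$ is not correct for types $\type{B}_n$ and $\type{C}_n$ when $p\mid(q+1)$. There the degenerate ($e=1$ with respect to $q^2$, but $d_p(q)=2$) case means the specific symbols ${j,\,n-j+1\choose 0}$ from Proposition~\ref{prop:boundunipdegs} need not all lie in $B_0(\wt{S})$; the paper has to swap in the symbol ${0,\,j\choose n-j+1}$, which has the same $r$-part of the degree and still extends to $\Aut(S)$. Your argument silently assumes this does not happen. Also, for $p=2$ the statement that all unipotent characters lie in $B_0(\wt{S})$ does not follow from ``$d_2(q)=1$'' and the odd-prime $e$-core theory; one needs a separate result for $2$-blocks, namely \cite[Theorem 21.14]{CE04}.

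In Case~(iii), your $2$-core computations for the two-row and hook partitions are correct, and your augmentation by hooks $(n-k,1^k)$ with $k$ even would work in principle; however, you explicitly leave both the degree-distinctness comparison (via $r$-parts) and the uniform descent to $B_0(G)$ as unverified ``obstacles,'' so this case is incomplete. Note the paper adds only the single extra partition $(n-2,1,1)$, whose $r$-part of the degree is $q^3$ and therefore automatically distinct from the two-row characters (whose $r$-parts are $q^j$ with $j\le\lfloor\tfrac{n-1}{2}\rfloor$) and from the trivial and Steinberg characters — a much quicker closure than your full hook family. Finally, your descent step (re-running Lemma~\ref{lem:unipext} inside $G$) is unnecessarily elaborate; once you have an extension of $\wt{\chi}$ in $B_0(\Aut(S))$, its restriction to $G$ is already irreducible and lies in $B_0(G)$, since $B_0(\Aut(S))$ covers $B_0(G)$.
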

\begin{proof}
If $p\mid q$, then $B_0(\wt{S})$ contains all unipotent characters of $\wt{S}$ aside from the Steinberg character.  (This holds for $B_0(S)$ by a well-known result of Dagger and Humphreys, see \cite[Theorems 1.18 and 3.3]{Cab18}.  For  $B_0(\wt{S})$, \cite[Remark 6.19]{CE04} and the proof of \cite[Theorem 6.18]{CE04} again yield that every block of $\wt{G}$ is of maximal defect or defect zero.  Using Brauer's first main theorem and the fact that $\cent{\wt{G}}{U}=\zent{\wt{G}} \zent{U}$ for $U$ a Sylow-$p$-subgroup of $G$, see \cite[Remark 6.19]{CE04}, it follows that the blocks of maximal defect of $\wt{G}$ are indexed by the elements of $\Irr(\zent{\wt{G}})$.  Then using Brauer's third main theorem, $\Irr(B_0(\wt{G}))$ consists of those $\chi\in\Irr(\wt{G}|1_{\zent{\wt{G}}})$ not of defect zero.) 

If $p=2\nmid q$, then $B_0(\wt{S})$ contains all unipotent characters, using \cite[Theorem 21.14]{CE04}.   
 If $p\nmid q$ is odd, we may use the theory of $e$-cores and $e$-cocores in \cite{FS82, FS89} to see that if $e=1$, then $B_0(\wt{S})$ 
 contains the Steinberg character along with all of the unipotent characters constructed in Proposition \ref{prop:boundunipdegs}, except possibly in the case of type $\type{B}_n$ or $ \type{C}_n$ when $p\mid (q+1)$. In the latter case, either the character corresponding to the symbol ${j, n-j+1}\choose{0}$ in Proposition \ref{prop:boundunipdegs} or the character corresponding to the symbol ${0, j}\choose {n-j+1}$ lies in $B_0(\wt{S})$, and the latter still satisfies that $\chi(1)_r=q^j$ and $\chi$ extends to $\Aut(S)$.
In the case of (3),  using \cite{FS82}, we see $B_0(\wt{S})$ contains at least the unipotent characters constructed in  Proposition \ref{prop:boundunipdegs} such that $j$ is even, along with the trivial and Steinberg character.  But, $B_0(\wt{S})$ also contains the unipotent character corresponding to the partition $(n-2,1,1)$, which has $q^{3}$ as the $r$-part of the degree, which is different than that of the previously stated characters for $n\geq 4$. This gives more than $\frac{n-1}{4}+2=\frac{n+7}{4}$ unipotent characters in $B_0(\wt{S})$ that extend to $\Aut(S)$ if $n\geq 4$.

Hence in each case, $B_0(\wt{S})$ contains more than $\mathcal{N}$ characters with distinct degree that extend to $\Aut(S)$, and hence to $B_0(\Aut(S))$ by Lemma \ref{lem:unipext}. Then this yields $\mathcal{N}$ characters of distinct degree in $B_0(G)$ by restriction.
\end{proof}

We remark that when $p\in\{2,3\}$, we have either (i) or (ii) of Corollary \ref{cor:boundunipdegsB0} applies in the symplectic and orthogonal cases.

\subsection{Non-Defining Characteristic}

\begin{lem}\label{lem:23Lietypenondef}
Let $p\in\{2,3\}$ and let $S$ be a simple group of Lie type defined over $\FF_q$, where $p\nmid q$, such that $S$ is not isomorphic to a group in Proposition \ref{prop:sporadicetc}.  Then Theorem \ref{thm:23simple} holds for $S$.
\end{lem}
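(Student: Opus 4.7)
The plan is to locate two unipotent characters of $\wt{S}$ of distinct degrees lying in the principal $p$-block and extending to $\Aut(S)$, and then restrict them to $G$ to obtain the required $\alpha$ and $\beta$. The main tools are Corollary~\ref{cor:boundunipdegsB0} and Lemma~\ref{lem:unipext}, which together guarantee more than $\mathcal{N}$ such extensions in $B_0(G)$, one of which is the extension of the trivial character. The Steinberg character $\St_S$ has degree $|H|_r$, coprime to $p$ in non-defining characteristic and at least $q\geq 3$ (so $\St_S(1)>2$), always lies in $B_0(S)$, and extends to $\Aut(S)$; it will serve as my default candidate for $\alpha$.

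For a classical group $S$ whose rank parameter puts us in case (ii) of Corollary~\ref{cor:boundunipdegsB0} with $n\geq 3$, or case (iii) with $n\geq 4$, we have $\mathcal{N}\geq 2$, so at least two nontrivial unipotent extensions of distinct degree remain in $B_0(G)$ after removing the trivial character, furnishing both $\alpha$ and $\beta$. For the exceptional groups of Lie type not excluded by Proposition~\ref{prop:sporadicetc}, we argue similarly by combining the explicit unipotent degree list in \cite[Section~13.9]{carter} with the unipotent $p$-block distribution from the theory of $e$-Harish-Chandra series (cf.\ \cite{FS82,FS89,CE04}). Since $p\in\{2,3\}$ allows only very few possibilities for $e = d_p(q)$, a case-by-case verification shows that $B_0(\wt S)$ contains the Steinberg together with at least one additional unipotent character of distinct degree, each extending to $\Aut(S)$ by \cite[Theorems 2.4 and 2.5]{Malle08}.

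The main obstacle is the remaining small-rank classical groups, principally $S = \PSL_2(q)$ (which has only two unipotent characters, $1_S$ and $\St_S$), and a few cases such as $\PSL_3^\epsilon(q)$ or $\PSp_4(q)$ with $p=3$ where Corollary~\ref{cor:boundunipdegsB0} does not yield $\mathcal{N}\geq 2$. For $\PSL_2(q)$ we may assume $q\geq 5$ and $q\neq 9$, since $\PSL_2(4)$ and $\PSL_2(9)$ have exceptional Schur multiplier and are covered by Proposition~\ref{prop:sporadicetc}; thus $\alpha = \St_S$ has degree $q > 2$, and we are forced to find $\beta$ outside the unipotent series. The idea is to locate a principal-block character of $\PSL_2(q)$ of degree $q\pm 1$ that is $\Aut(S)$-invariant (for instance, one arising by restriction from $\PGL_2(q)$), exploiting the well-documented block structure of $\PSL_2(q)$ with dihedral or cyclic defect groups, and then to verify its extension into $B_0(G)$ via a direct character-table computation and Brauer's third main theorem. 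The remaining small-rank exceptions are handled by analogous explicit analyses of the principal block, and this case-by-case work is where the bulk of the technical effort lies.
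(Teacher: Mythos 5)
Your plan and the paper's proof diverge in an important way. You insist on producing \emph{both} $\alpha$ and $\beta$ as extensions of unipotent characters with distinct degrees, routing everything through Corollary~\ref{cor:boundunipdegsB0} and Lemma~\ref{lem:unipext}. That does work in high rank, but it leaves you with an irreducible residue of small-rank cases ($\PSL_2(q)$, $\PSL_3^\epsilon(q)$, $\PSp_4(q)$ in some congruences) where the unipotent theory gives too few characters, and for those you only sketch an approach (``locate a principal-block character of degree $q\pm1$ that is $\Aut(S)$-invariant \dots verify by direct character-table computation''). That is where your argument has a genuine gap: nothing has been proved for $\PSL_2(q)$, and the $\Aut(S)$-invariance you ask for is neither established nor actually needed.

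The paper's proof is considerably more economical because it observes that the statement of Theorem~\ref{thm:23simple} is asymmetric: $\alpha$ must extend from $B_0(S)$, but $\beta$ merely has to lie in $\Irr(B_0(G))$ with $\beta(1)\ne\alpha(1)$ and $S\nsubseteq\ker\beta$. Taking $\alpha=\St_S$ (which lies in $B_0(S)$ because when $p\in\{2,3\}$ and $p\nmid q$ one always has $d_p(q)\in\{1,2\}$, so by the argument of \cite[Lemma 3.6]{RSV21} the principal block is the unique block containing unipotent characters of $p'$-degree, and $\St_S$ extends into $B_0(G)$ by Lemma~\ref{lem:unipext}), one uses that $\St_S$ is the \emph{only} irreducible character of $S$ whose degree is a power of the defining prime $r$. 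Consequently, for any $\theta\in\Irr(B_0(S))\setminus\{1_S,\St_S\}$ and any $\beta\in\Irr(B_0(G))$ over $\theta$ one has $\theta(1)\mid\beta(1)$ while $\theta(1)$ is not an $r$-power, forcing $\beta(1)\ne\alpha(1)$; and $S\nsubseteq\ker\beta$ is automatic. The only remaining input is the existence of such a $\theta$, which is supplied uniformly by \cite[Theorem C]{RSV20}. This handles every type at once, including $\PSL_2(q)$, with no case division and no appeal to Corollary~\ref{cor:boundunipdegsB0} at all. In short: you have constrained $\beta$ much more than the theorem requires, and those extra constraints are precisely what creates the small-rank cases you cannot finish.
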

\begin{proof}
Let $S=H/\zent{H}$  and $\wt{S}=\wt{H}/\zent{\wt{H}}$ as above. Recall that the Steinberg character $\mathrm{St}_{\wt S}$ of $\wt S$ extends to $\Aut(S)$. Further, $\mathrm{St}_{\wt S}$ lies in the principal block, since the orders of $q$ modulo $3$ and modulo $4$ are in $\{1,2\}$, and hence the same argument as in \cite[Lemma 3.6]{RSV21} yield that $B_0(\wt{S})$ is the only block of $\wt{S}$ containing unipotent characters of degree relatively prime to $p$. So, let $\alpha=\mathrm{St}_S$ be the Steinberg character of $S$, i.e. the restriction of $\mathrm{St}_{\wt{S}}$ to $S$. Then $\alpha$ extends to $B_0(G)$ by Lemma \ref{lem:unipext}.

Now, $\alpha$ is the only irreducible character of $S$ whose degree is a power of $q$.  Hence, it suffices to know that there is some member of $\irr{B_0(S)}$ that is not trivial or the Steinberg character. This follows, for example, by \cite[Theorem C]{RSV20}.
\end{proof}

\begin{pro}\label{prop:2lietypenondef}
Theorem \ref{thm:2simple} holds if $S$ is a group of Lie type defined over $\FF_q$ with $2\nmid q$.
\end{pro}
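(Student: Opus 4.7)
The plan is to combine Corollary~\ref{cor:boundunipdegsB0} with a finite case check. Since $p=2$ and $q$ is odd, case~(ii) of that corollary applies to each classical group $S$ listed there, producing strictly more than $\tfrac{n+1}{2}$ distinct degrees in $\cd(B_0(G))$. Requiring $|\cd(B_0(G))|=3$ therefore forces $n\le 4$, which, combined with the dimension hypotheses in the corollary, leaves only $\PSL_n^\epsilon(q)$ with $n\in\{2,3,4\}$, $\PSp_{2n}(q)$ with $n\in\{2,3,4\}$, $\POmega_{2n+1}(q)$ with $n\in\{3,4\}$, and $\POmega_8^\epsilon(q)$ among the classical simple groups.

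For each of these small-rank classical groups other than $\PSL_2(q)$, and for each exceptional simple group of Lie type with $q$ odd (namely $\type{G}_2(q)$, $\tw{3}\type{D}_4(q)$, $\type{F}_4(q)$, $\type{E}_6(q)$, $\tw{2}\type{E}_6(q)$, $\type{E}_7(q)$, $\type{E}_8(q)$, and the Ree groups $\tw{2}\type{G}_2(q)$), I would exhibit at least four unipotent characters of $\wt S$ with pairwise distinct degrees, each lying in $B_0(\wt S)$ and each extending to $\Aut(S)$. By Lemma~\ref{lem:unipext} these produce four distinct degrees in $\cd(B_0(G))$, contradicting $|\cd(B_0(G))|=3$. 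The crucial block-theoretic input, already used in Corollary~\ref{cor:boundunipdegsB0} and Lemma~\ref{lem:23Lietypenondef}, is that for $p=2\nmid q$ every unipotent character of $\wt S$ lies in $B_0(\wt S)$ by \cite[Theorem~21.14]{CE04}; so one is left with a pure count of unipotent degrees, which can be read off from the partition/symbol description in \cite[Section~13.8]{carter} for the classical cases and from the tables of \cite[Section~13.9]{carter} for the exceptional ones.

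Once $S\cong\PSL_2(q)$ is established, the metabelian Sylow~$2$-subgroup statement is routine. Apart from $q=9$ (handled by Proposition~\ref{prop:sporadicetc}), the Schur multiplier of $\PSL_2(q)$ has order~$2$, so every perfect central extension $K$ of $S$ is a quotient of $\SL_2(q)$; a Sylow $2$-subgroup of $\SL_2(q)$ is generalized quaternion of order $(q^2-1)_2$, and every generalized quaternion group is metabelian. For $q=9$ the perfect covers $\fA_6,\ 2.\fA_6,\ 3.\fA_6,\ 6.\fA_6$ have Sylow $2$-subgroups $D_8$ or $Q_{16}$, so the same conclusion holds.

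The hardest part will be the case by case check for the smallest Lie-type groups, where there may be fewer than four distinct unipotent degrees in $B_0(\wt S)$. For examples such as $\PSL_3^\epsilon(q)$, $\PSp_4(q)$, $\type{G}_2(q)$ and $\tw{2}\type{G}_2(q)$, the unipotent contribution alone produces only about three distinct degrees, so one must supplement it with characters in a nontrivial Lusztig series that lie in $B_0(\wt S)$, located via Jordan decomposition and Brauer's third main theorem, as in the proof of Lemma~\ref{lem:23Lietypenondef}. This bookkeeping is the most technical portion of the argument.
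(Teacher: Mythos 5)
The proposal takes a genuinely different route from the paper, and some steps as outlined would fail without modification. The paper's key shortcut, which you bypass, is to combine the recently proven principal-block case of Brauer's height zero conjecture \cite{MN21} with Walter's classification of groups with abelian Sylow $2$-subgroups \cite{walter}: for $S$ of Lie type in odd characteristic, the only case with abelian Sylow $2$-subgroups is $\tw{2}\type{G}_2(3^f)$, so the paper gets an even degree in $B_0(G)$ essentially for free in all other cases. It then only needs to exhibit \emph{one} further degree beyond $\{1,\mathrm{St}_S(1)\}$, and the proof of \cite[Proposition 3.4]{RSV20} supplies an odd-degree unipotent character for this purpose in every case except $\PSL_3^\epsilon(q)$. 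Your plan of producing four extending unipotent characters is a heavier demand, and requires the Lusztig-series supplement that you flag as the hard part; the paper needs this machinery only for $\PSL_3^\epsilon(q)$.

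Two concrete gaps. First, for $S=\POmega_8^+(q)=\type{D}_4(q)$ the triality automorphisms (giving $\sym_3\leq\Out(S)$) fuse unipotent characters in orbits of size $3$, so it is not possible to exhibit four unipotent characters of distinct degree that all \emph{extend} to $\Aut(S)$, which is what Lemma~\ref{lem:unipext} requires. The paper circumvents this by observing that the two triples of fused unipotent characters (degrees $q^6+q^4+q^2$ and $q^{10}+q^8+q^6$) produce two characters of $B_0(G)$ whose degrees cannot coincide regardless of the fusion pattern. Your plan as stated does not address this and would need an analogous twist. Second, for $\tw{2}\type{G}_2(3^f)$ the Sylow $2$-subgroup is elementary abelian of order $8$, so the BHZ argument gives nothing, and this group also has rather few unipotent characters; the paper instead locates two additional odd degrees of multiplicity one, so the corresponding characters are rational-valued and lie in $B_0(S)$ by \cite[Lemma 3.1]{NST}, and then uses cyclicity of $\Out(S)$ to extend them. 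You fold this case into the generic ``use Lusztig series'' bucket, which is plausible but left unaddressed. On the other hand, your reduction of the classical groups to $n\leq 4$ via Corollary~\ref{cor:boundunipdegsB0} is correct, and your explicit argument for the metabelian Sylow $2$-subgroups of perfect central extensions of $\PSL_2(q)$ supplies a detail the paper leaves implicit.
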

\begin{proof}
Assume that $S$ is not isomorphic to $\PSL_2(q)$ nor one of the groups in Proposition \ref{prop:sporadicetc}, and let $S\lhd G\leq \Aut(S)$. Recall that $B_0(G)$ contains $1_G$ and an extension of $\mathrm{St}_S$, so it suffices to show that there are at least 2 additional character degrees found in $B_0(G)$.


First, assume that $S$ is the Ree group $\tw{2}\type{G}_2(3^f)$. Then there are two odd degrees other than those of $1_S, \mathrm{St}_S$ with multiplicity one, which forces the corresponding characters to be rational-valued and hence lie in $B_0(S)$, using \cite[Lemma 3.1]{NST}.  Further, since $\Out(S)$ is cyclic, we see then that these characters extend to $B_0(G)$, and we are done. 

Hence we may assume that $S$ is not $\tw{2}\type{G}_2(3^f)$, so the Sylow $2$-subgroups of $S$ are nonabelian.  (See \cite{walter}.)  Then by the recently-proven principal block case of Brauer's height zero conjecture \cite{MN21}, there is a member of $\Irr(B_0(G))$ with even degree.  On the other hand, the proof of \cite[Proposition 3.4]{RSV20} yields an odd-degree unipotent character of $B_0(S)\setminus\{1_S, \mathrm{St}_S\}$ unless $S=\PSL_3^\epsilon(q)$.   By \cite[Theorems 2.4 and 2.5]{Malle08} and arguing as in Lemma \ref{lem:unipext}, these unipotent characters extend to $B_0(G)$, except possibly if $S=\type{D}_n(q)$ or $\type{G}_2(3^f)$ and $G$ contains an automorphism of the form $\tau\varphi$ where $\tau$ is a nontrivial graph automorphism and $\varphi$ is another (possibly trivial) automomorphism. If $S=\type{D}_n(q)$ with $n\geq 5$, we are done by Corollary \ref{cor:boundunipdegsB0}.

If $S=\type{G}_2(q)$ with $q=3^f$, then the two odd-degree unipotent characters other than $1_S, \mathrm{St}_S$ have degree $\frac{1}{3}q(q^4+q^2+1)$ and fuse under the graph automorphism of order 2 but are invariant under the cyclic group of field automorphisms.  Hence in $B_0(G)$, the characters lying above these  have degree $\frac{2}{3}q(q^4+q^2+1)$ or $\frac{1}{3}q(q^4+q^2+1)$.  But using \cite{hiss-shamash}, we see  $B_0(S)$ also contains unipotent characters whose degree does not divide $\frac{2q}{3}(q^4+q^2+1)$, so $B_0(G)$ must contain at least one more distinct degree.  


In the case $\type{D}_4(q)$, there are three unipotent characters of degree $q^{10}+q^8+q^6$ and three of degree $q^6+q^4+q^2$.  The triples fuse under the graph automorphisms but are invariant under diagonal and field automorphisms, and hence the characters in $B_0(G)$ above the two triples cannot be the same degree.

Finally, let $S=\PSL_3^\epsilon(q)$. In this case, the two characters in the Lusztig series of $\wt{H}=\GL_3^\epsilon(q)$ corresponding to a semisimple element with eigenvalues $\{-1,-1,1\}$, which have degrees $q(q^2+\epsilon q+1)$ and $(q^2+\epsilon q+1)$, are invariant under $D$, trivial on $\zent{\wt{H}}$, and restrict irreducibly to $H$.  That is, we may view these as characters of $S$ that extend to $\wt{S}\rtimes D_1$, where $\wt{S}=\PGL_3^\epsilon(q)$ and $D_1$ is the cyclic subgroup of $D$ of index 2 consisting of field automorphisms.  These characters further lie in $B_0(S)$, using \cite[Theorem 21.14]{CE04}.  Then the characters of $B_0(G)$ above these two characters will have degrees $c_1\in\{q(q^2+q+1), 2q(q^2+q+1)\}$ and $c_2\in\{(q^2+q+1), 2(q^2+q+1)\}$, and hence are distinct.
 \end{proof}

\begin{pro}\label{prop:3lietypenondef}
Theorem \ref{thm:3simple} holds if $S$ is a group of Lie type defined over $\FF_q$ with $3\nmid q$.
\end{pro}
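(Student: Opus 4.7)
The plan is to combine the unipotent degree count from Corollary \ref{cor:boundunipdegsB0} with the principal-block case of the Brauer Height Zero Conjecture \cite{MN21}, which under the hypothesis that $G$ has nonabelian Sylow $3$-subgroups guarantees a character of $3$-divisible degree in $B_0(G)$. Together with the trivial character and an extension of $\mathrm{St}_S$ (whose degree is a power of $q$, hence coprime to $3$, and which lies in $B_0(G)$ by Lemma \ref{lem:23Lietypenondef}), this already produces three distinct degrees in $B_0(G)$; it remains to exhibit a fourth. By Proposition \ref{prop:sporadicetc} I may assume that $S$ has no exceptional Schur multiplier.

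Since $3\nmid q$ implies $q^2\equiv 1\pmod 3$, Corollary \ref{cor:boundunipdegsB0} applies with $e=1$ in the symplectic and orthogonal cases, and with $e\in\{1,2\}$ when $S=\PSL_n^\epsilon(q)$. In all of these cases the bound $\mathcal{N}\geq 3$, which forces strictly more than three distinct unipotent degrees in $B_0(G)$, is met as soon as the underlying rank satisfies $n\geq 5$ (since both $(n+1)/2\geq 3$ and $(n+7)/4\geq 3$ hold for such $n$). This settles the proposition for all classical $S$ of rank at least $5$.

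The residual classical cases are $\PSL_n^\epsilon(q)$ with $n\in\{2,3,4\}$, $\PSp_{2n}(q)$ with $n\in\{2,3,4\}$, $\POmega_{2n+1}(q)$ with $n\in\{3,4\}$, and $\POmega_8^{\pm}(q)$, together with all exceptional groups of Lie type. In each such case my approach is first to discard the subcases where the Sylow $3$-subgroup of $S$ (or of $G$) is already abelian, using the criteria of \cite[Thm.~4.10.2]{GLS}; for instance $\PSL_2(q)$ has cyclic Sylow $3$-subgroups, and the same is true for $\PSL_3^\epsilon(q)$ whenever $3\mid q+\epsilon$. For what remains, the Fong--Srinivasan block distribution \cite{FS82,FS89} together with the unipotent-degree formulas in \cite[Sections 13.8--13.9]{carter} furnishes an explicit fourth unipotent character of $B_0(S)$ of a new degree, which one then verifies extends into $B_0(G)$ via Lemma \ref{lem:unipext} and \cite[Theorems 2.4, 2.5]{Malle08}. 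If in some small case no suitable unipotent character is available, the fourth degree can instead be sought among characters in a Lusztig series attached to a $3$-regular semisimple element, whose degree contains a factor of the form $q^a\pm 1$ distinct from those already observed.

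The main obstacle is guaranteeing that the constructed fourth character genuinely lies in $B_0(G)$ rather than merely in $B_0(S)$. This is most delicate for $\POmega_8^+(q)$, where the nonabelian automorphism group $\fS_3\times D_1$ forces a repetition of the rational-value argument used in Lemma \ref{lem:unipext}, and for small-rank linear cases where the graph automorphism can fuse otherwise-distinct unipotent characters. Verifying the ``nonabelian Sylow $3$'' hypothesis in the small-rank list is a routine but not entirely automatic check that I would carry out using \cite[Chapter 4]{GLS} or direct computation in GAP.
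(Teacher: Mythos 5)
Your opening reduction is essentially the paper's: you use Corollary \ref{cor:boundunipdegsB0} (noting $3\nmid q$ forces $q^2\equiv 1\pmod 3$, so $e=1$ in the symplectic/orthogonal cases) to handle classical $S$ of large rank, and you correctly see that the bound $\mathcal{N}\ge 3$ kicks in once $n\ge 5$. But there is a genuine error in your treatment of the residual list that the paper does not make and which cannot be waved away. You propose to \emph{discard} the subcases where ``the Sylow $3$-subgroup of $S$ (or of $G$) is already abelian,'' giving $\PSL_2(q)$ and $\PSL_3^\epsilon(q)$ with $3\mid(q+\epsilon)$ as your examples. The hypothesis of Theorem \ref{thm:3simple}, however, is that \emph{$G$} has nonabelian Sylow $3$-subgroups, not $S$. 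Precisely when a Sylow $3$-subgroup of $S$ is cyclic, the hypothesis forces $3\mid|G:S|$, so $G$ contains a field automorphism of order $3$ and its Sylow $3$-subgroup is nonabelian even though $S$'s is not. You cannot discard these cases; they are the hardest part of the proof. The paper devotes its entire final paragraph to them, constructing semisimple characters $\chi_{s_1},\chi_{s_2}$ attached to elements whose eigenvalues have carefully chosen $3$-part orders, so that one is fixed by the relevant field automorphism (yielding a $3'$-degree in $B_0(G)$) and the other is not (yielding a $3$-divisible degree in $B_0(G)$). Your one-sentence gesture toward ``Lusztig series attached to a $3$-regular semisimple element, whose degree contains a factor of the form $q^a\pm 1$'' does not substitute for this; in particular it does not address why such a character lands in $B_0(G)$ with the needed $3$-part of degree under the action of the field automorphism.

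Beyond that central gap, the rest of the residual analysis is under-specified in ways that matter. For $\type{D}_4(q)$ the graph automorphism group is $\fS_3$, so triality can fuse the unipotent characters you are counting on, and the degree in $B_0(G)$ above a fused triple is \emph{not} the unipotent degree of $\wt{S}$; the paper leans on the special-case argument in Lemma \ref{lem:unipext} and on the explicit unipotent degree list to keep the count at four, whereas you simply assert extension via Lemma \ref{lem:unipext} and \cite{Malle08}. For $\type{B}_2(2^a)=\PSp_4(2^a)$, the exceptional graph automorphism fuses two of the unipotent characters, and the paper must appeal to the $\frac{q}{2}(q^2+1)$ and $\frac{q}{2}(q+\epsilon)^2$ degrees and their extension behaviour to finish. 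Also note that for $\PSL_n^\epsilon(q)$ with $e=2$, Corollary \ref{cor:boundunipdegsB0}(iii) only applies when $n\ge 4$, so $n\in\{2,3\}$ with $e=2$ is not covered by that corollary at all; it must be handled directly, which ties back to the $\PSL_2(q)$ and $\PSL_3^\epsilon(q)$, $3\mid(q+\epsilon)$, cases you tried to discard. The $\PSL_4^\epsilon(q)$, $\epsilon q\equiv 1\pmod 3$, and $\PSL_3^\epsilon(q)$, $\epsilon q\equiv 1\pmod 3$, cases also need a specific observation (the principal block is the unique unipotent block, resp.\ an explicit third $3'$-degree from \cite{RSV20}) rather than a generic appeal to Fong--Srinivasan. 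As written, your proposal outlines a plausible search strategy but is not a proof of the residual cases, and the claim that the abelian-Sylow-of-$S$ cases can be dropped is incorrect.
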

\begin{proof}

Assume that $S$ is not isomorphic to one of the groups in Proposition \ref{prop:sporadicetc} and let $S\lhd G\leq \Aut(S)$.  As before, it suffices to find two distinct degrees in $B_0(G)$ apart from those of the extensions of $1_S$ and $\mathrm{St}_S$.  If $S$ is an exceptional group of Lie type (including those of Suzuki, Ree, and $\tw{3}\type{D}_4(q)$ type), then \cite[Table 2]{RSV20} exhibits at least two additional unipotent characters in $B_0(\wt{S})$ with distinct degrees that extend to $\Aut(S)$, and we are done using Lemma \ref{lem:unipext}.  Then by applying Corollary \ref{cor:boundunipdegsB0}, we may assume that $S$ is one of the classical groups listed there with $n\leq 4$. But using \cite[Tables 4-6]{RSV20}, we are again done unless $S=\PSL_4^\epsilon(q)$ with $\epsilon\in\{\pm1\}$ such that $\epsilon q\equiv 1\pmod 3$; $S=\PSL_2(q)$; $S=\PSL_3^\epsilon(q)$, $S=\type{B}_2(2^a)$, or $S=\type{D}_4(q)$.



If $S=\type{B}_2(2^a)$, then similar to the proof of \cite[Proposition 3.9]{RSV20}, there exist unipotent characters of degree $\frac{q}{2}(q^2+1)$ and $\frac{q}{2}(q+\epsilon)^2$, where $\epsilon\in\{\pm1\}$ is such that $p\mid (q-\epsilon)$,  in $B_0(S)$ and $B_0(\wt{S})$, and the latter extends to $\Aut(S)$.  Then we are done using Lemma \ref{lem:unipext}. Similarly, the unipotent characters listed there for $\type{D}_4(q)$ work here, as they again extend to $\Aut(S)$.

Now, consider $S=\PSL_4^\epsilon(q)$ with $\epsilon\in\{\pm1\}$ and such that $\epsilon q\equiv 1\pmod 3$.  Then  there is a unique unipotent block of $S$ (and of $\wt{S}$), namely the principal block.  Since there are five unipotent characters of distinct degree, and they all extend to $\Aut(S)$, we are done in this case.  Similarly, if $S=\PSL_3^\epsilon(q)$ with $\epsilon q \equiv 1\pmod 3$, then there are 3 unipotent characters of distinct degrees, which all must lie in $B_0(S)$ and extend to $\Aut(S)$.    From here, it suffices to note that the degree $(q+\epsilon)(q^2+\epsilon q+1)$ of the third nontrivial character in $\Irr_{3'}(B_0(S))$ described in \cite[Proposition 3.11]{RSV20} does not divide the degree of any of the three unipotent characters.

Finally, assume that $S=\PSL_3^\epsilon(q)$ with $3\mid (q+\epsilon)$ or $S=\PSL_2(q)$.  Then a Sylow $3$-subgroup of $S$ is cyclic, so  our assumption that the Sylow $3$-subgroups of $G$ are nonabelian forces $3$ to divide the index $|G:S|$ and $G$ to contain some field automorphism of order $3$.  In particular, if $r$ is the prime dividing $q$, we may then write $q^2=r^{3^ba}$ for some integers $b\geq 1$ and $a\geq 1$.

Consider semisimple elements $s_1, s_2$ of $\wt{H}^\ast\cong\GL_n^\epsilon(q)$ (where $n\in\{2,3\}$ is the appropriate value) with eigenvalues $\{\delta_1, \delta_1^{-1}\}$ and $\{\delta_2, \delta_2^{-1}\}$, respectively (and with an additional eigenvalue of 1 in the case $n=3$), where $\delta_i\in\FF_{q^2}^\times$,  $\delta_1$ has order  $(r^a-1)_3$ and $\delta_2$ has order $(q^2-1)_3$.  Then for $i=1,2$, the corresponding so-called \emph{semisimple character}  $\chi_{s_i}$ of $\wt{H}=\GL_n^\epsilon(q)$ lies in $\Irr(B_0(\wt{H}))$ by \cite[Corollary 3.4]{Hiss90} and has degree relatively prime to $3$. Further, we see $\chi_{s_i}$ is trivial on the center and restricts irreducibly to $S$, using \cite[Proposition 2.6]{SFT} and \cite[Lemma 1.4]{RSV21}, since $s_i\in\SL_n^\epsilon(q)$, $\PGL_3^\epsilon(q)=\PSL_3^\epsilon(q)$ in this case, and $\zent{\GL_2(q)}$ is not divisible by $3$, so $s_iz$ cannot be conjugate to $s_i$ for any $1\neq z\in\zent{\GL_2(q)}$.  Note that for $\alpha\in D$, $\chi_{s_i}^\alpha=\chi_{s_i^\alpha}$, using \cite[Corollary 2.5]{NTT08}.  (Here since $\wt{H}^\ast\cong \wt{H}=\GL_n^\epsilon(q)$, we abuse notation and let $\alpha^\ast=\alpha$ in the notation of \cite{NTT08}.)

Now, note that the order of $\delta_2$ is strictly larger than $(r^{3^{b-1}a}-1)_3$, and hence $\delta_2^{r^{3^{b-1}a}\pm1}\neq 1$.  Recalling that semisimple classes of $\GL_n^\epsilon(q)$ are determined by their eigenvalues, we have $s_2$ cannot be conjugate to $s_2^\alpha$ for $\alpha$ a field automorphism of order $3$, and therefore $\chi_{s_2}$ is not fixed by $\alpha$.  In particular, a character in $B_0(G)$ above $\chi_{s_2}$ must have degree divisible by 3.  On the other hand, every field automorphism of order a power of $3$ sends $s_1$ to a conjugate of itself, and hence $\chi_{s_1}$ is fixed by all such field automorphisms.  Let $G_3\leq G$ where $G_3/S$ is a Sylow $3$-subgroup of $G/S$.  Note that $G_3/S$ must be generated by field automorphisms, and hence is cyclic, so we see $\chi_{s_1}$ extends to $B_0(G_3)$, and therefore there is a character in $B_0(G)$ above $\chi_{s_1}$ that is still of $3'$-degree.  Then this character cannot have the same degree as the one discussed above lying above $\chi_{s_2}$, and we are done.
\end{proof}

\subsection{Defining Characteristic}

\begin{pro}\label{prop:Lietypedefchar}
Let $p\in\{2,3\}$.  Theorems \ref{thm:23simple}, \ref{thm:2simple}, and \ref{thm:3simple} hold if $S$ is a group of Lie type defined over $\FF_q$ with $q$ a power of $p$. 
\end{pro}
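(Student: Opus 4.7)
The plan is to exploit the structure of the principal $p$-block in defining characteristic: by the classical results of Dagger and Humphreys cited in the proof of Corollary \ref{cor:boundunipdegsB0}, $B_0(\wt S)$ (resp.\ $B_0(S)$) contains every ordinary irreducible character of $\wt S$ (resp.\ $S$) apart from those of $p$-defect zero, and in particular contains all unipotent characters except the Steinberg character. The common strategy for each of the three theorems is to use Corollary \ref{cor:boundunipdegsB0}(i), which in defining characteristic yields more than $(n-1)/2$ distinct character degrees in $\Irr(B_0(G))$ obtained as extensions of unipotent characters for classical $S$ of rank $n$, to dispatch all cases of sufficiently large rank; then handle small-rank classical groups and exceptional types by direct inspection of unipotent character degrees from Carter's tables \cite[Section 13.9]{carter}. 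By Proposition \ref{prop:sporadicetc} one may exclude from the outset the groups with exceptional Schur multiplier.

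For Theorem \ref{thm:23simple}, I would choose a unipotent $\alpha\in\Irr(B_0(S))$ of degree greater than $2$ extending to $\Irr(B_0(G))$ via Lemma \ref{lem:unipext}, together with a second unipotent character $\beta\in\Irr(B_0(G))$ of a different degree that is nontrivial on $S$. Corollary \ref{cor:boundunipdegsB0} supplies such $\alpha,\beta$ immediately outside of a finite list of small cases; the only family where this unipotent strategy genuinely degenerates is $S=\PSL_2(q)$, whose only unipotent characters are the trivial and the Steinberg, and for $p=3$ this is precisely the excluded family $\PSL_2(3^k)$ in the hypothesis. For $p=2$ and $S=\PSL_2(2^a)$ with $a\geq 2$, I would instead take $\alpha$ and $\beta$ from the principal and discrete series of $S$ of degrees $q+1$ and $q-1$, and verify by examining the action of $\Aut(S)=\mathrm{P}\Gamma\mathrm{L}_2(q)$ on these series (which in characteristic $2$ reduces to the action of the cyclic group of field automorphisms on characters of tori $C_{q\pm1}$) that characters with the required extension properties appear in $\Irr(B_0(G))$.

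For Theorem \ref{thm:2simple}, assume $p=2$ and $S\neq \PSL_2(q)$; the goal is $|\cd(B_0(G))|\geq 4$. Corollary \ref{cor:boundunipdegsB0} combined with Lemma \ref{lem:unipext} handles classical $S$ with $n\geq 7$, and the remaining low-rank classical and exceptional types fall to a direct case analysis, listing the unipotent character degrees in $B_0(\wt S)$, verifying which extend to $\Aut(S)$, and producing at least four distinct degrees in $B_0(G)$. For the addendum on metabelian Sylow $2$-subgroups of perfect central extensions of $\PSL_2(2^a)$: the Schur multiplier is trivial unless $a=2$ (see \cite[Table 6.1.3]{GLS}), so a perfect central extension $K$ is either $S$ itself, with Sylow $2$-subgroup elementary abelian of order $q$, or $K=\SL_2(5)=2.A_5$, whose Sylow $2$-subgroup is $Q_8$; both are metabelian.

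For Theorem \ref{thm:3simple}, assume $p=3$ and $G$ has nonabelian Sylow $3$-subgroup. In defining characteristic $3$, a Sylow $3$-subgroup of $S$ is the group of $\FF_q$-points of a maximal unipotent subgroup, and is abelian only when $S$ has semisimple rank one, i.e.\ when $S=\PSL_2(3^a)$. For $S\neq\PSL_2(3^a)$, the Sylow $3$-subgroup of $S$ itself is already nonabelian and Corollary \ref{cor:boundunipdegsB0}, supplemented by direct computation for low-rank classical and exceptional types, yields $|\cd(B_0(G))|\geq 4$. In the remaining case $S=\PSL_2(3^a)$, the nonabelian Sylow $3$-subgroup hypothesis forces $G$ to contain a field automorphism of order $3$, so $3\mid a$ and $q\geq 27$; in this range $B_0(S)$ already contains characters of the four distinct degrees $1,\,q-1,\,q+1,\,(q\pm1)/2$, and a short orbit-count argument (using that a field automorphism of order $3$ acting on the two $S$-characters of degree $(q\pm1)/2$ must fix each individually, and that field-fixed principal and discrete series characters exist) produces at least four distinct degrees in $B_0(G)$. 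The main obstacle throughout is the low-rank bookkeeping: Corollary \ref{cor:boundunipdegsB0} is too coarse for small $n$, so a finite list of groups of Lie type must be examined explicitly and, for each, one has to track carefully which characters of $S$ extend versus fuse under outer automorphisms when passing to $B_0(G)$.
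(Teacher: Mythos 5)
Your overall strategy matches the paper's: invoke the Dagger--Humphreys description $\Irr(B_0(S))=\Irr(S)\setminus\{\mathrm{St}_S\}$, apply Corollary~\ref{cor:boundunipdegsB0}(i) together with Lemma~\ref{lem:unipext} to knock out high-rank classical groups, consult explicit unipotent degree tables for exceptional types and small-rank classical groups, and then treat $\PSL_2(q)$ separately; that is exactly the paper's route. Your treatment of $\PSL_2(2^a)$ via principal/discrete series characters invariant under field automorphisms is in the same spirit as the paper's choice of a semisimple character $\chi_s$ with eigenvalues of order $3$, and your handling of $\PSL_2(3^a)$ via orbit counting parallels the paper's use of the principal-block height-zero theorem plus field-invariant characters of degrees $(q+\eta)/2$ and $q-\eta$.

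However, there is one place where your plan as stated would not close: for Theorem~\ref{thm:2simple} you propose to deal with the remaining low-rank cases purely by ``listing the unipotent character degrees in $B_0(\wt S)$'' and checking which extend. In defining characteristic this fails not just for $\PSL_2(q)$ but also for $\PSL_3^\epsilon(q)$ (whose only unipotent degrees in $B_0$ are $1$ and $q(q+\epsilon)$) and for $\tw{2}\type{B}_2(q)$ (only $1$ and $2^m(q-1)$), so you cannot reach four degrees from unipotent characters alone. The paper resolves these by going outside the unipotent part: for $\PSL_3^\epsilon(q)$ it takes a pair of characters of degrees $q^2+\epsilon q+1$ and $q(q^2+\epsilon q+1)$ from a non-unipotent Lusztig series (invariant under $D$, trivial on the center, irreducible on restriction to $S$), and for $\tw{2}\type{B}_2(q)$ it uses characters of degrees $q^2+1$ and $(q-2^{m+1}+1)(q-1)$ together with an orbit argument. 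You should flag these two families alongside $\PSL_2(q)$ as places where non-unipotent characters are required; the sentence at the end of your proposal about ``tracking carefully which characters of $S$ extend versus fuse'' suggests you anticipate this, but the earlier description commits you to a unipotent-only inspection that would not finish the job.
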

\begin{proof}
We may assume that $S$ is not isomorphic to a group in Propositions \ref{prop:sporadicetc}. Recall that, from \cite[Theorem 3.3]{Cab18}, we have $\Irr(B_0(S))=\Irr(S)\setminus\{\mathrm{St}_S\}$. 

 If $S$ is one of the groups considered in Corollary  \ref{cor:boundunipdegsB0} with $n\geq 7$, we are done. 
  For the exceptional groups $\type{G}_2(q)$, $\tw{2}\type{G}_2(q)$, $\tw{3}\type{D}_4(q)$, $\type{F}_4(q)$, $\tw{2}\type{F}_4(q)$, $\type{E}_6(q)$, $\tw{2}\type{E}_6(q)$, $\type{E}_7(q)$, or $\type{E}_8(q)$ or the remaining classical groups other than $\{\PSL_2(q), \PSL_3^\epsilon(q), \PSp_4(q)\}$, we see from the explicit list of unipotent character degrees (using CHEVIE or \cite[Sections 13.8, 13.9]{carter}) and again using \cite[Theorems 2.4 and 2.5]{Malle08} that there are still at least 4 unipotent characters with distinct degrees aside from $\mathrm{St}_S$ that extend to $\Aut(S)$, and we are again done in these cases using Lemma \ref{lem:unipext}.
 We are left with the cases $S\in\{\PSL_2(q), \PSL_3^\epsilon(q), \PSp_4(q), \tw{2}\type{B}_2(q)\}$.

If $S=\PSL_3^\epsilon(q)$, there are two unipotent characters in $\Irr(B_0(S))$ (namely the trivial character and a character of degree $q(q+\epsilon)$), and as  before these extend to $B_0(G)$.  Hence we aim to show there are at least two additional distinct degrees in $\Irr(B_0(G))$. There are characters $\chi_1, \chi_2$ of degree $q+\epsilon q+1$ and $q(q+\epsilon q+1)$ that restrict irreducibly from characters $\wt{\chi}_1, \wt{\chi}_2$ of $\wt{S}:=\PGL_3^\epsilon(q)$ and have the same inertia group in $\Aut(S)$.  (These characters correspond to a Lusztig series in $\wt{H}=\GL_3^\epsilon(q)$ indexed by a semisimple element in $\wt{H}^\ast\cong\GL_3^\epsilon(q)$ with eigenvalues $\{\zeta, \zeta, \zeta^{-2}\}$ where $\zeta\in C_{q-\epsilon}\leq \FF_{q^2}^\times$ has order different from 3.)   Let $F_0$ be a generating field automorphism of $S$, so that $\Aut(S)=\wt{S}\langle \tau, F_0\rangle$  with $\tau$ a graph automorphism of order 2 in case $\epsilon=1$ and $\tau=1$ if $\epsilon=-1$.  If $1\neq\tau\in G$, then there are characters in $B_0(\wt{S}\langle F_0\rangle\cap G)$ above ${\chi}_1, {\chi}_2$ with degrees $a(q+\epsilon q+1)$ and $aq(q+\epsilon q +1)$ with $a$ some divisor of $|F_0|$, and hence we see the characters above these in $B_0(G)$ will have different degrees.  Similarly, if $G$ does not contain a non-trivial graph automorphism, then $G\wt{S}/\wt{S}$ is cyclic, and there are again characters of distinct degrees $a(q+\epsilon q+1)$ and $aq(q+\epsilon q +1)$ in $B_0(G)$ with $a$ some divisor of $|G\wt{S}/\wt{S}|$, and we are done.

If $S=\PSp_4(q)$, the unipotent characters with degree $1, \frac{1}{2}q(q-1)^2,$ and $\frac{1}{2}q(q+1)^2$  in $\Irr(B_0(S))$  extend to $B_0(G)$ as before.  The two unipotent characters of degree $\frac{1}{2}q(q^2+1)$ fuse under the exceptional graph automorphism if $p=2$ and extend to $\Aut(S)$ if $p=3$.  In either case, since the other character degrees discussed are not divisible by $q^2+1$, we see $B_0(G)$ must contain at least 4 distinct degrees.

If $S=\tw{2}\type{B}_2(q)$, note that $p=2$ and $q=2^{2m+1}$ for some $m$. Further, note that $G/S$ is cyclic, of odd order dividing $2m+1$. Then since there are two characters of $S$ of degree $2^m(2^{2m+1}-1)$, they must extend to $B_0(G)$. Let $\chi_1$ be one of these characters. There is also a character $\chi_2$ of degree divisible by $q^2+1$ in $B_0(G)$, above a character of degree $q^2+1$ of $S$. Then certainly $\chi_1(1)\neq \chi_2(1)$. Further, by considering the possible orbit sizes of $G$ acting on the characters of degree $q^2+1$ and of degree $(q-2^{m+1}+1)(q-1)$, we see that there must be a character $\chi_3$ of $B_0(G)$ of degree divisible by $(q-2^{m+1}+1)(q-1)$ and such that none of $\chi_3(1), \chi_2(1),$ or $\chi_1(1)$ are the same.  Taking into account the trivial character, this yields at least 4 character degrees in $B_0(G)$.

Finally, let $S=\PSL_2(q)$. It remains to prove Theorem \ref{thm:23simple} for  $p=2$   and Theorem \ref{thm:3simple} for $p=3$. Recall that $\Aut(S)= \wt{S}\rtimes\langle F_0\rangle$ where $\wt{S}=\PGL_2(q)$ and $F_0$ is a generating field automorphism. 

First let $S=\PSL_2(q)=\SL_2(q)$ with $q$ a power of $2$. Let $s\in\GL_2(q)$ be semisimple with eigenvalues $\{a, a^{-1}\}$, where $a\in\mathbb{F}_{q^2}^\times$ has order $3$.  Then the corresponding semisimple  character $\chi_s$ of $\GL_2(q)$ is invariant under the field automorphisms, using \cite[Corollary 2.5]{NTT08}, since the generating field automorphism $F_0$ acts by interchanging the eigenvalues of $s$ and hence $s^{F_0}$ is conjugate to $s$.  Further, since $s$ cannot be conjugate to $sz$ for any $1\neq z\in\zent{\GL_2(q)}$, we see $\chi_s$ restricts irreducibly to $\SL_2(q)=\PSL_2(q)$  using \cite[Proposition 2.6]{SFT} and \cite[Lemma 1.4]{RSV21}. So, we have $\chi_s$ extends to $\Aut(S)$ and $\alpha:=(\chi_s)_S$ has an extension in $B_0(G)$ since $G/(G\cap \wt{S})$ is cyclic.  Further, $\alpha(1)=q-\eta$ for some $\eta\in\{\pm1\}$. Then choosing $\beta$ to lie above any of the characters of $S$ of degree $q+\eta$, we are done in this case. 

Now suppose $p=3$, and we aim to prove Theorem \ref{thm:3simple} for $S=\PSL_2(q)$. If $3\nmid |G:S|$, then a Sylow $3$-subgroup of $G$ is abelian.  Hence we assume that $3\mid |G/S|$ and that the Sylow $3$-subgroups of $G$ are nonabelian. Then by the principal block version of Brauer's height-zero conjecture \cite{MN21}, we see that there must be characters of $B_0(G)$ of degree divisible by $3$.  Let $\chi_1$ be one such character. Let $q\equiv \eta\pmod 4$ with $\eta\in\{\pm1\}$.  Then the two characters of degree $\frac{1}{2}(q+\eta)$ are invariant under field automorphisms and fuse in $\wt{S}$.  Further, the character of $\wt{S}$ lying above these two characters is invariant under field automorphisms.  Hence $B_0(G)$ has a character $\chi_2$ of degree $(q+\eta)$ or $\frac{1}{2}(q+\eta)$.  The remaining nontrivial characters of $B_0(S)$ have degrees $q+\eta$ and $q-\eta$. Let $G_3$ be the subgroup of $G$ such that $G_3/S\in\mathrm{Syl}_3(G/S)$.   Since $3$ does not divide the number of characters of degree $q-\eta$ 
of $S$, we see that at least one character, say $\chi$, of degree $q-\eta$ 
must be invariant under $G_3$.  Further, note that $G_3/S$ is cyclic, so any character in $B_0(G_3)$ above $\chi$ must be an extension.  In particular, any character $\chi_3\in\Irr(B_0(G))$ above $\chi$ in $B_0(G)$ will have degree prime to $3$.  Since $\chi_3$ has degree prime to 3 and divisible by $q-\eta$, we see that $1_G, \chi_1, \chi_2,$ and $\chi_3$ have distinct degrees,  as desired. 
\end{proof}

 \section{Symmetric and alternating groups} 

The aim of this section is to give a positive answer to Question A in the cases of symmetric and alternating  groups (respectively denoted by $\fS_n$ and $\fA_n$). As already mentioned in the introduction, we can do sligthly more. Given a $p$-block $B$ of a finite group $G$ and $\chi\in\mathrm{Irr}(B)$ we let $h_p(\chi)$ denote the $p$-height of $\chi$ in $B$. Moreover, we let $\mathrm{ht}(B)=\{h_p(\chi)\ |\ \chi\in\mathrm{Irr}(B)\}.$

\begin{thm}\label{thm: Sn}
Let $p$ be a prime and let $B$ be a $p$-block of $\fS_n$ or $\fA_n$. Let $D$ be a defect group of $B$. Then $\mathrm{dl}(D)\leq |\mathrm{ht}(B)|.$
\end{thm}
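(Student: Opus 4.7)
The plan is to reduce the problem to a combinatorial statement about $p$-quotients, solve that statement by analyzing the base-$p$ expansion of the $p$-weight $w$, and then descend from $\fS_n$ to $\fA_n$ by Clifford theory. By the Nakayama conjecture, the $p$-block $B$ of $\fS_n$ is determined by a $p$-core $\gamma$ and a weight $w\geq 0$ with $n=|\gamma|+pw$, and its irreducible characters $\chi^\lambda$ are parametrized by $p$-quotients $(\lambda^{(0)},\dots,\lambda^{(p-1)})$ satisfying $\sum_i|\lambda^{(i)}|=w$. The defect group $D$ is isomorphic to a Sylow $p$-subgroup of $\fS_{pw}$; writing $w=\sum_{i=0}^{m}a_i p^i$ in base $p$ with $a_m\neq 0$, we have $D\cong\prod_{i=0}^{m}\bigl(\Syl_p(\fS_{p^{i+1}})\bigr)^{a_i}$, and since $\Syl_p(\fS_{p^k})=C_p\wr\cdots\wr C_p$ ($k$ factors) has derived length $k$, we obtain $\dl(D)=m+1$ for $w\geq 1$ (the $w=0$ case is trivial).

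Using the classical bijection between $p$-hooks of $\lambda$ and cells of its $p$-quotient, the hook-length formula then yields
\[
h_p(\chi^\lambda)\;=\;\nu_p\!\binom{w}{w_0,\dots,w_{p-1}}\;+\;\sum_{i=0}^{p-1}\nu_p\!\bigl(\chi^{\lambda^{(i)}}(1)\bigr),\qquad w_i:=|\lambda^{(i)}|,
\]
so $\mathrm{ht}(B)$ depends only on $w$, and my task becomes that of showing this expression attains at least $m+1$ distinct values as the $p$-quotient varies.

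I would build $m+1$ distinct heights by combining two explicit families. \emph{Kummer contributions:} for the composition $(w-p^j,p^j,0,\dots,0)$ with trivial row-shape partitions in the $p$-quotient, the height equals $\nu_p\binom{w}{p^j}$, i.e.\ the number of carries when adding $p^j$ and $w-p^j$ in base $p$; this is $0$ when $a_j\neq 0$, and equals $\ell(j)-j$ when $a_j=0$, where $\ell(j):=\min\{\ell>j:a_\ell\neq 0\}$. \emph{Internal contributions:} setting $\lambda^{(0)}=\mu\vdash w$ with the other components empty gives height $\nu_p(\chi^\mu(1))$; for hook partitions $\mu=(w-k,1^k)$ this equals $\nu_p\binom{w-1}{k}$, again controlled by Kummer. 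At the extremes either family suffices on its own: when $w=p^m$ the Kummer family already yields $\{0,1,\dots,m\}$ (take $j=0,\ldots,m$), and when every digit $a_i$ is nonzero the internal hook family supplies $m+1$ distinct values. For intermediate digit patterns I would combine the two, feeding runs of zero digits into the Kummer term and nonzero-digit positions into the internal term, exploiting the additive structure of the height formula to assemble $m+1$ pairwise distinct values.

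The $\fA_n$ case follows by descent: each block of $\fA_n$ sits under a unique block of $\fS_n$ of the same weight $w$. For odd $p$, Sylow $p$-subgroups of $\fA_n$ and $\fS_n$ coincide and degree-preserving Clifford theory gives $\mathrm{ht}(B_{\fA_n})=\mathrm{ht}(B_{\fS_n})$, whereas for $p=2$ the defect group may drop by a factor of $2$, lowering $\dl$ by at most $1$, which is absorbed by the slack in the symmetric-group construction (whose output has already exceeded $m+1$ in the relevant cases). The main obstacle will be the combinatorial verification in the third paragraph: ensuring that the mixed Kummer/internal construction produces at least $m+1$ pairwise distinct heights uniformly across all base-$p$ patterns of $w$, with no uncontrolled coincidences between the two families when they are combined.
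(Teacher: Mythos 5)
Your framework is correct: the block $B(\gamma,w)$ of $\fS_n$ has defect group a Sylow $p$-subgroup of $\fS_{pw}$, whose derived length equals the number of base-$p$ digits of $w$, and the height formula you write down, $h_p(\chi^\lambda)=\nu_p\binom{w}{w_0,\dots,w_{p-1}}+\sum_i\nu_p(\chi^{\lambda^{(i)}}(1))$, is a correct consequence of the hook bijection and the fact that the defect of $B(\gamma,w)$ is $\nu_p((pw)!)$. Your two extreme cases are also handled correctly (for instance, when all digits of $w$ are nonzero, $k=p^r-1$ produces exactly $r$ carries in $\binom{w-1}{k}$). But the core step is missing, and it does not follow from the two families you name. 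Take $w=p+p^3$, with digit pattern $(0,1,0,1)$, so $\dl(D)=4$. Your Kummer family yields $\nu_p\binom{w}{p^j}\in\{0,1\}$ for every $j$, and your internal hook family yields $\nu_p\binom{w-1}{k}\in\{0,1,2\}$: the digits of $w-1$ are $(p-1,0,0,1)$, so when adding $k$ and $w-1-k$ there can never be a carry out of position $0$ (that would force a digit sum of $2p-1$) nor a carry out of the top position, bounding the carry count by $2$. Thus the union of your two named families is $\{0,1,2\}$, one short. A composition such as $(p^3-1,\;p+1,\;0,\dots,0)$ does achieve height $3$ via three carries in the multinomial coefficient, but this is not of the form $(w-p^j,p^j,0,\dots)$, so it falls outside your description, and your phrase about ``feeding runs of zero digits into the Kummer term'' does not indicate how to find such compositions in general or guarantee $m+1$ pairwise distinct values across all digit patterns. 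This is exactly the step you flag as the ``main obstacle,'' and it is the entire content of the theorem; as written the proof is incomplete.

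The paper handles this by iterating the $p$-quotient via Olsson's $p$-core tower and the formula $h_p(\chi^\lambda)=\bigl(\sum_{j\geq 1}|T_j(\lambda)|-a_j\bigr)/(p-1)$, which separates the contribution of each level. It then constructs $\lambda_0,\dots,\lambda_{k-1}$ recursively, modifying two adjacent layers of the tower at each step so that $\sum_j |T_j|p^j$ is preserved while $\sum_j|T_j|$ increases by exactly $p-1$, forcing the heights $0,1,\dots,k-1$. Your single-level formula is equivalent, but hides the recursion inside $\nu_p(\chi^{\lambda^{(i)}}(1))$, which is why restricting the components to hooks loses reach. Finally, your descent to $\fA_n$ also needs repair: you must ensure the chosen partitions are non-self-conjugate and pairwise non-conjugate, so that the restrictions to $\fA_n$ are irreducible and distinct with heights shifted by a common constant; the paper secures this with \cite[Proposition 3.5]{OlssonBook} for $n\geq 5$. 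The remark that the $p=2$ drop in derived length is ``absorbed by the slack'' is not accurate, since the $\fS_n$ construction produces exactly $\dl(D)$ distinct heights, not more.
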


As consequences of Theorem \ref{thm: Sn} above, we will show that simple alternating groups satisfy Theorems \ref{thm:23simple}, \ref{thm:2simple}, and \ref{thm:3simple}

\subsection{Notation and preliminaries}
Irreducible complex characters of the symmetric group $\mathfrak{S}_n$ are canonically labelled by partitions of $n$. In particular, given $\lambda$ a partition of $n$ (sometimes written $|\lambda|=n$) we denote by $\chi^\lambda$ the corresponding element of $\mathrm{Irr}(\fS_n)$. 
We start by recalling very briefly some useful facts on $p$-blocks of symmetric groups. 
We refer the reader to \cite{JK} or \cite{OlssonBook} for a complete description of this theory.
Let $B$ be a $p$-block of $\fS_n$. 
Then $B$ is uniquely determined by a $p$-core partition $\gamma$. More precisely two irreducible characters $\chi^\lambda$ and $\chi^\mu$ lie in the same $p$-block if and only if $\lambda$ and $\mu$ have equal $p$-core. In particular, they belong to the block $B$ if and only if they have $p$-core equal to $\gamma$. 
The integer $w=(n-|\gamma|)/p$ is called the $p$-weight of $B$. It is customary in this case to denote $B$ by $B(\gamma, w)$.
If $w=a_1+a_2p+\cdots +a_kp^{k-1}$ is the $p$-adic expansion of $w$, then a defect group $D$ of $B$ is of the following form: $$D\cong (P_p)^{\times a_1}\times (P_{p^2})^{\times a_2}\times\cdots\times (P_{p^k})^{\times a_k}, $$
where $P_m$ denotes a Sylow $p$-subgroup of $\fS_m$. In particular, we observe that $D$ is (isomorphic to) a Sylow $p$-subgroup of $\fS_{wp}$. 

\begin{lem}\label{lem: derlen}
Let $D=(P_p)^{\times a_1}\times (P_{p^2})^{\times a_2}\times\cdots\times (P_{p^k})^{\times a_k}$. Then $\mathrm{dl}(D)=k$. 
\end{lem}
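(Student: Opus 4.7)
The derived length of a direct product of groups equals the maximum of the derived lengths of the factors, so
\[
\mathrm{dl}(D) \;=\; \max\bigl\{\mathrm{dl}(P_{p^i}) \,:\, 1 \leq i \leq k,\ a_i > 0\bigr\}.
\]
Because $w = a_1 + a_2 p + \cdots + a_k p^{k-1}$ is the (standard) $p$-adic expansion of $w$, the leading digit $a_k$ is nonzero, so $P_{p^k}$ genuinely appears among the factors of $D$. The lemma therefore reduces to establishing the identity
\[
\mathrm{dl}(P_{p^j}) \;=\; j \qquad \text{for every } j \geq 1.
\]

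To prove this, I would use the classical isomorphism $P_{p^j} \cong P_{p^{j-1}} \wr C_p$, which exhibits $P_{p^j}$ as the $j$-fold iterated wreath product of $C_p$ with itself, and argue by induction on $j$. The base case $j = 1$ is immediate since $P_p \cong C_p$. For the upper bound in the inductive step, the base subgroup $B := P_{p^{j-1}}^{\times p}$ is normal in $P_{p^j}$ with abelian quotient $C_p$, so $(P_{p^j})' \leq B$; applying the inductive hypothesis yields $\mathrm{dl}(B) = \mathrm{dl}(P_{p^{j-1}}) = j-1$, and therefore $\mathrm{dl}(P_{p^j}) \leq j$.

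The main obstacle is the matching lower bound $\mathrm{dl}(P_{p^j}) \geq j$, for which one has to exhibit a nontrivial element in the $(j{-}1)$-st derived subgroup of $P_{p^j}$. My plan is to track the diagonal embedding $P_{p^{j-1}} \hookrightarrow B$ through the derived series: commutators of the form $[b,\sigma]$ with $b \in B$ and $\sigma$ a generator of the top cyclic factor $C_p$ lie in $(P_{p^j})' \cap B$ and, as $b$ varies, project surjectively onto each coordinate of $B$ up to the relation that the product of coordinates is trivial modulo $(P_{p^{j-1}})'$. Combining this with commutators computed inside a single coordinate copy of $P_{p^{j-1}}$, one verifies inductively that $(P_{p^j})^{(i)}$ contains a copy of $(P_{p^{j-1}})^{(i-1)}$ for each $1 \leq i \leq j-1$. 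Since $(P_{p^{j-1}})^{(j-2)} \neq 1$ by the inductive hypothesis, this forces $(P_{p^j})^{(j-1)} \neq 1$, completing the lower bound. Alternatively, one may simply invoke the classical result (going back to Kaloujnine and Weir on the Sylow subgroups of symmetric groups) that the iterated wreath product of $j$ copies of $C_p$ has derived length exactly $j$.
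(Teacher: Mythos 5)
Your proposal is correct and follows essentially the same route as the paper's own (very terse) proof: both reduce to the identity $\mathrm{dl}(P_{p^j})=j$ via $(G\times H)'=G'\times H'$, together with the facts $P_p\cong C_p$ and $P_{p^j}\cong P_{p^{j-1}}\wr C_p$, arguing by induction on $j$. The only difference is that you spell out the lower bound $\mathrm{dl}(P_{p^j})\geq j$ (which the paper leaves implicit with ``the statement follows''); your plan to track a surjection from the base commutator subgroup onto a single coordinate copy of $P_{p^{j-1}}$ does indeed yield this bound.
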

\begin{proof}
Let $G'$ denote the derived subgroup of $G$. It is easy to show that $(G\times H)'=G'\times H'$. Since $P_p\cong C_p$ is a cyclic group of order $p$ and $P_{p^k}\cong (P_{p^{k-1}})\wr C_p$, the statement follows. 
\end{proof}

\subsection{Combinatorics and representations of $\fS_n$}
To prove Theorem \ref{thm: Sn} we will rely on Olsson's theory of $p$-core towers. We use this section to recall these beautiful combinatorial objects and we refer the interested reader to \cite{OlssonBook} for a more detailed account. Throughout this section, given two non-negative integers $x, y$ we denote by $[x,x+y]$ the set $\{n\in\mathbb{N}\ |\ x\leq n\leq x+y\}$.

Let $p$ be a prime number and let $\lambda$ be a partition of $n\in\mathbb{N}$. 
We denote by $c_p(\lambda)$ the $p$-core of $\lambda$ and by $q_p(\lambda)=(\lambda^{(0)},\lambda^{(1)},\ldots,\lambda^{(p-1)})$ the $p$-quotient of $\lambda$. 
With this notation in mind, we say that $\lambda^{(i)}$ is the $i$-th partition appearing in the $p$-quotient $q_p(\lambda)$ of $\lambda$.

For any sequence $(i_1,i_2,\ldots, i_k)\in [0,p-1]^{\times k}$ we inductively define
$\lambda^{(i_1,i_2,\ldots, i_k)}$ as the $i_k$-th partition appearing in the $p$-quotient of $\lambda^{(i_1,\ldots, i_{k-1})}$.
Moreover, for $i\in [0,p-1]$ we let $\lambda_{(i)}:=c_p(\lambda^{(i)})$ be the $p$-core of the $i$-th partition appearing in the $p$-quotient of $\lambda$. 
For any sequence $(i_1,i_2,\ldots, i_k)\in [0,p-1]^{\times k}$ we let
$\lambda_{(i_1,i_2,\ldots, i_k)}=c_p(\lambda^{(i_1,i_2,\ldots, i_k)}).$

We denote by $T(\lambda)$ the \textit{$p$-core tower} of $\lambda$ (see \cite[Section 6]{OlssonBook}). In particular we find convenient to think of the $p$-core tower as a sequence $T(\lambda)=(T_{j}(\lambda))_{j=0}^{\infty}$, 
where the $k$-th layer (or row) $T_k(\lambda)$ is the sequence of $p^k$ $p$-core partitions defined as follows: $T_0(\lambda)=(c_p(\lambda))$, and for $k\geq 1$, 
$$T_k(\lambda)=\big(\lambda_{(0,\ldots,0)},\ldots, \lambda_{(i_1,i_2,\ldots, i_k)}, \ldots, \lambda_{(p-1,\ldots, p-1)}\big),\ \ \text{for}\ \ 0\leq i_1,\ldots, i_k\leq p-1.$$

 We denote by $|T_k(\lambda)|$ the sum of the sizes of the $p$-cores in the $k$-th layer of $T(\lambda)$. 
We have that $|\lambda|=\sum_j|T_j(\lambda)|p^j$. Moreover, every partition $\lambda$ of $n$ is uniquely determined by its $p$-core tower. This follows by repeated applications of  \cite[Proposition 3.7]{OlssonBook}. 

In \cite{Olsson} the following fundamental result is proved. 

\begin{thm}\label{Mac}
Let $p$ be a prime number and let $\lambda$ be a partition of $n\in\mathbb{N}$. Let $B=B(\gamma, w)$ be the $p$-block of $\chi^\lambda$.
Suppose that $wp=\sum_{j=1}^ka_jp^j$ is the $p$-adic expansion of $wp$. 
Then $$h_p(\chi^\lambda)=\big(\sum_{j=1}^k |T_j(\lambda)|-a_j\big)/(p-1).$$
\end{thm}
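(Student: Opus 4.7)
The plan is to derive the formula by computing $\nu_p(\chi^\lambda(1))$ directly using the hook length formula together with an inductive analysis of the $p$-core tower, and then comparing with the known value of $\nu_p(|\fS_n|) - d(B)$.

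First I would start from the hook length formula $\chi^\lambda(1)=n!/\prod_{(i,j)\in\lambda}h_{i,j}(\lambda)$ and invoke the classical combinatorial fact (provable via the abacus display, cf.\ \cite{JK} or \cite{OlssonBook}) that the hook lengths of $\lambda$ that are divisible by $p$ are, counted with multiplicity, exactly $p$ times the hook lengths of the $p$-quotient partitions $\lambda^{(0)},\dots,\lambda^{(p-1)}$. Since the weight $w(\lambda)$ equals the number of such hooks, this yields
\[
\nu_p\Bigl(\prod_{(i,j)\in\lambda}h_{i,j}(\lambda)\Bigr)=w(\lambda)+\sum_{i=0}^{p-1}\nu_p\Bigl(\prod_{(i,j)\in\lambda^{(i)}}h_{i,j}(\lambda^{(i)})\Bigr).
\]

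Next I would iterate this identity through the $p$-core tower. Setting $S_k(\lambda):=|T_k(\lambda)|$ and summing the weight contributions at every level, the recursion collapses to
\[
\nu_p\Bigl(\prod h_{i,j}(\lambda)\Bigr)=\sum_{k\geq 1}\Sigma_k,\qquad \Sigma_k:=\sum_{(i_1,\dots,i_k)}|\lambda^{(i_1,\dots,i_k)}|,
\]
and I would rewrite each $|\lambda^{(i_1,\dots,i_k)}|$ via its own $p$-core tower decomposition $|\mu|=\sum_j S_j(\mu)p^j$. Exchanging the order of summation gives the clean identity
\[
\nu_p\Bigl(\prod h_{i,j}(\lambda)\Bigr)=\sum_{m\geq 1}S_m(\lambda)\cdot\frac{p^m-1}{p-1}.
\]

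Combining this with Legendre's formula $\nu_p(n!)=(n-s_p(n))/(p-1)$ and using $n=\sum_{m\geq 0}S_m(\lambda)p^m$ (so that $\sum_{m\geq 0}S_m(\lambda)p^m-\sum_{m\geq 1}S_m(\lambda)(p^m-1)=\sum_{m\geq 0}S_m(\lambda)$), one obtains
\[
\nu_p(\chi^\lambda(1))=\frac{\sum_{m\geq 0}S_m(\lambda)-s_p(n)}{p-1}.
\]
Finally I would compute the height as $h_p(\chi^\lambda)=\nu_p(\chi^\lambda(1))-\nu_p(n!)+d(B)$, where the defect of $B=B(\gamma,w)$ satisfies $d(B)=\nu_p((wp)!)=(wp-s_p(wp))/(p-1)$ by Lemma~\ref{lem: derlen} and Legendre. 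Since $wp=n-S_0(\lambda)=\sum_{m\geq 1}S_m(\lambda)p^m=\sum_{j\geq 1}a_jp^j$, the $s_p(n)$ terms cancel and the formula simplifies exactly to
\[
h_p(\chi^\lambda)=\frac{\sum_{j\geq 1}S_j(\lambda)-s_p(wp)}{p-1}=\frac{\sum_{j=1}^{k}\bigl(|T_j(\lambda)|-a_j\bigr)}{p-1}.
\]
The main obstacle is the first recursive step: proving that the multiset of $p$-divisible hooks of $\lambda$ equals $p$ times the multiset of hooks of the $p$-quotient. This is most cleanly handled on the abacus, where removing a $p$-hook corresponds to sliding a bead one position up on a runner, so I would quote this from \cite[Ch.~1]{OlssonBook} rather than reprove it; the rest of the argument is bookkeeping.
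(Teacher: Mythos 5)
Your argument is correct. Note that the paper does not prove this theorem at all: it is quoted verbatim from Olsson [Ols76] (the label ``Mac'' alludes to Macdonald, whose formula $\nu_p(\chi^\lambda(1))=\bigl(\sum_{m\geq 0}|T_m(\lambda)|-s_p(n)\bigr)/(p-1)$ is exactly the intermediate identity you derive). So there is nothing in the paper to compare against line by line; what you have written is essentially a reconstruction of the standard original proof. The three ingredients all check out: (1) the hook/quotient correspondence gives the recursion $\nu_p(\prod h(\lambda))=w(\lambda)+\sum_{i}\nu_p(\prod h(\lambda^{(i)}))$, and iterating it with $\Sigma_k=\sum_{m\geq k}p^{m-k}|T_m(\lambda)|$ (the layers of the tower of $\lambda^{(i_1,\dots,i_k)}$ aggregate to the layers of $T(\lambda)$ shifted by $k$ --- worth one explicit sentence) collapses to $\sum_{m\geq 1}|T_m(\lambda)|\frac{p^m-1}{p-1}$; (2) Legendre plus $n=\sum_m|T_m(\lambda)|p^m$ gives Macdonald's formula; (3) since the defect group is a Sylow $p$-subgroup of $\fS_{wp}$, $d(B)=\nu_p((wp)!)$, and the telescoping with $wp=n-|T_0(\lambda)|=\sum_{m\geq1}|T_m(\lambda)|p^m$ yields the stated height. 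Two small corrections: the defect computation should be attributed to the description of the defect group in the paper's preliminaries (or to [JK]/[Ols94]), not to Lemma \ref{lem: derlen}, which computes the derived length rather than the order of $D$; and in your first display the index $i$ is used simultaneously for a cell $(i,j)$ of $\lambda^{(i)}$ and for the quotient component itself, which should be disambiguated.
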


\subsection{Proof of Theorem \ref{thm: Sn}}

Let $n$ be a natural number, let $B=B(\gamma, w)$ be a $p$-block of $\fS_n$ and let $wp=\sum_{j=1}^ka_jp^j$ be the $p$-adic expansion of $wp$, where $a_k\neq 0$. 
This notation will be kept throughout the section. 

\begin{defi}\label{def: 0}
Given a prime $p$ and an integer $a\in [1,p-1]$, we denote by $\gamma_a$ the partition $(p, 1\6a)$.  It is easy to observe that $\gamma_a$ is a $p$-core partition of $p+a$. 
\end{defi}

We are now ready to introduce the main combinatorial objects of this section. 
In order to do this we mention that the empty partition will usually be denoted by $\emptyset$ or by $(0)$ depending on our convenience. 

\begin{defi}\label{def: 1}
Let $\lambda_0, \lambda_1,\ldots, \lambda_{k-1}$ be the partitions of $n$ defined as follows. 
We let $\lambda_0$ be the partition such that $T_0(\lambda_0)=(\gamma)$ and for all $i\in [1,k]$ we let $T_i(\lambda_0)=((a_i), \emptyset, \ldots, \emptyset)$. 

For every $j\in [1,k-1]$ we recursively define $\lambda_j$ by modifying layers $k-j$ and $k-j+1$ of the $p$-core tower corresponding to $\lambda_{j-1}$.
More precisely, we
let $\lambda_j$ be the partition corresponding to the following $p$-core tower: $T_0(\lambda_j)=(\gamma)$, $T_i(\lambda_j)=T_i(\lambda_{j-1})$ for all $i\notin\{k-j, k-j+1\}$, and 
\[ T_{k-j}(\lambda_j) = \begin{cases}
	(\gamma_a, \emptyset, \ldots, \emptyset) & \mathrm{if}\ T_{k-j}(\lambda_{j-1})=((a),\emptyset, \ldots, \emptyset),\ \exists\ a\in [1,p-1]\\
	((p-1), (1),\emptyset, \ldots, \emptyset) & \mathrm{if}\ T_{k-j}(\lambda_{j-1})=(\emptyset, \ldots, \emptyset),
	\end{cases}\]
and

\[ T_{k-j+1}(\lambda_j) = \begin{cases}
((a-1),\emptyset, \ldots, \emptyset)& \mathrm{if}\ T_{k-j+1}(\lambda_{j-1})=((a),\emptyset, \ldots, \emptyset),\ \exists\ a\in [1,p-1]\\
(\gamma_{a-1}, \emptyset, \ldots, \emptyset) & \mathrm{if}\ T_{k-j+1}(\lambda_{j-1})=(\gamma_a,\emptyset, \ldots, \emptyset),\ \exists\ a\in [2,p-1]\\
	((p-1), (1),\emptyset, \ldots, \emptyset) & \mathrm{if}\ T_{k-j+1}(\lambda_{j-1})=(\gamma_1,\emptyset, \ldots, \emptyset),\\
	((p-1),\emptyset, \ldots, \emptyset) & \mathrm{if}\ T_{k-j+1}(\lambda_{j-1})=((p-1),(1),\emptyset, \ldots, \emptyset).
	\end{cases}\] 
\end{defi}

We will now verify that for every $j\in [0,k-1]$ the partition $\lambda_j$ is well-defined and labels a character lying in $B=B(\gamma, w)$. Afterwards we will show that $h_p(\lambda_j)=h_p(\lambda_i)$ if and only if $i=j$. This will allow us to conclude that $\mathrm{dl}(D)\leq |\mathrm{ht}(B)|$.

\begin{lem}\label{lem: 1}
Given $j\in [0,k-1]$ we have that $\lambda_j$ is well-defined and labels a character lying in $B=B(\gamma, w)$.
\end{lem}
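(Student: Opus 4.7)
The plan is to proceed by induction on $j \in [0,k-1]$, simultaneously verifying three properties: (i) at each step, the partitions appearing in Definition \ref{def: 1} fall into exactly one of the listed cases, so that the recursion is well-defined; (ii) the total size $|\lambda_j|=n$ is preserved; and (iii) the zeroth layer of the tower remains $T_0(\lambda_j)=(\gamma)$, so that $\chi^{\lambda_j}\in\mathrm{Irr}(B(\gamma,w))$. Recall that a partition is uniquely determined by its $p$-core tower, so (i)--(iii) together with the prescription in Definition \ref{def: 1} really will pin down $\lambda_j$.

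The base case $j=0$ is immediate: applying $|\mu|=\sum_i|T_i(\mu)|p^i$ together with the $p$-adic expansion $wp=\sum_{i=1}^{k}a_i p^i$ yields $|\lambda_0|=|\gamma|+wp=n$, and $T_0(\lambda_0)=(\gamma)$ by construction.

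For the inductive step, I would examine the two layers modified at step $j$. Since the prior steps $1,\ldots,j-1$ only touched layers of index $\geq k-j+1$, the layer $T_{k-j}(\lambda_{j-1})$ coincides with $T_{k-j}(\lambda_0)=((a_{k-j}),\emptyset,\ldots,\emptyset)$, which falls into the first or second case of the definition of $T_{k-j}(\lambda_j)$ depending on whether $a_{k-j}\geq 1$ or $a_{k-j}=0$. The layer $T_{k-j+1}(\lambda_{j-1})$ requires more care. For $j=1$ it equals $((a_k),\emptyset,\ldots,\emptyset)$ with $a_k\in[1,p-1]$ since $a_k$ is the leading nonzero digit, matching the first case for $T_{k-j+1}(\lambda_j)$. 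For $j\geq 2$ this layer was the ``lower'' layer $T_{k-(j-1)}$ modified at step $j-1$, so by the inductive hypothesis it has shape $(\gamma_a,\emptyset,\ldots,\emptyset)$ for some $a\in[1,p-1]$ (output of the first case) or $((p-1),(1),\emptyset,\ldots,\emptyset)$ (output of the second case); these are covered by the second case (when $a\geq 2$), the third case (when $a=1$), and the fourth case, respectively. This exhausts all possibilities and confirms well-definedness.

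To finish the step I would verify size preservation by a short check: every modification at layer $k-j$ adds exactly $p$ boxes (both $((a))\to(\gamma_a)$ and $\emptyset\to((p-1),(1))$ contribute $+p$), while every modification at layer $k-j+1$ removes exactly one box (a routine case-by-case inspection). Weighting by the layer index, the net change in $|\lambda|$ is $p\cdot p^{k-j}-1\cdot p^{k-j+1}=0$, so $|\lambda_j|=|\lambda_{j-1}|=n$. Finally, since $j\leq k-1$ forces $k-j\geq 1$, the zeroth layer is never modified, so $T_0(\lambda_j)=(\gamma)$ and $\chi^{\lambda_j}\in\mathrm{Irr}(B(\gamma,w))$. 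The main obstacle is the bookkeeping in property (i), namely tracking which ``upper'' case applies to $T_{k-j+1}(\lambda_{j-1})$ in terms of which ``lower'' case was applied to that same layer at the previous step; once this pattern is in hand, the size calculation and the $p$-core invariance are essentially one-line verifications.
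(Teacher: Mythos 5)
Your proof is correct and follows the same inductive strategy as the paper's own argument: verify consistency of the recursion, track the size change at layers $k-j$ and $k-j+1$, and observe that layer $0$ is never modified. Your treatment of part (i) is in fact a bit more explicit than the paper's (which simply asserts the required membership conditions hold ``by direct verification''), and your observation that $T_{k-j}(\lambda_{j-1})=T_{k-j}(\lambda_0)$ because prior steps only touch layers of index $\geq k-j+1$ is a clean way to see it, but the overall route is the same.
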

\begin{proof}
We proceed by induction on $j\in [0,k-1]$. The base case is immediate. The partition $\lambda_0$ is well defined, $\gamma=T_0(\lambda_0)$ is the $p$-core of $\lambda_0$ and $\lambda_0$ is a partition of $n$ since
$$|\lambda_0|=\sum_{i=0}\6k|T_i(\lambda_0)|p\6i=\sum_{i=0}\6ka_ip\6i=n.$$ 
Let now $j\in [1,k-1]$. Let us start by showing that the definition of its $p$-core tower is consistent. In order to do this we need to show that each row $T_i(\lambda_j)$ is well-defined. If $i\notin \{k-j, k-j+1\}$ then no problems arise, as $T_i(\lambda_j)=T_i(\lambda_{j-1})$. For $i\in\{k-j, k-j+1\}$, we need to verify that 
$$T_{k-j}(\lambda_{j-1})\in\{((a),\emptyset, \ldots, \emptyset)\ |\ a\in [0,p-1]\},$$
and that 
$$T_{k-j+1}(\lambda_{j-1})\in\{((a),\emptyset, \ldots, \emptyset), (\gamma_a,\emptyset, \ldots, \emptyset), ((p-1),(1),\emptyset, \ldots, \emptyset)\ |\ a\in [1,p-1]\}.$$
Both statements are verified again by induction (on $j-1$). They are clearly satisfied by $\lambda_0$, and for $j-1\geq 1$ they remain true by direct verification (using the recursive definition of the partitions $\lambda_0, \ldots, \lambda_{k-1}$ given in Definition \ref{def: 1}). 
Now, we want to show that $\lambda_j$ is a partition of $n$ for all $j\in [0,k-1]$. The base case $j=0$ has been already verified above. Let $j\geq 1$ and let us suppose that $|\lambda_{j-1}|=n$. Then we observe that $T_i(\lambda_{j})=T_i(\lambda_{j-1})$, for all $i\in [0,k]\smallsetminus\{k-j, k-j+1\}$. 
Moreover, we have that $|T_{k-j}(\lambda_{j})|=|T_{k-j}(\lambda_{j-1})|+p$ and that $|T_{k-j+1}(\lambda_{j})|=|T_{k-j+1}(\lambda_{j-1})|-1$. It follows that 
$$|\lambda_j|=\sum_{i=0}\6k|T_i(\lambda_j)|p\6i=\sum_{i=0}\6k|T_i(\lambda_{j-1})|p\6i+p\cdot p\6{k-j}-1\cdot p\6{k-j+1}=|\lambda_{j-1}|=n.$$ 
Since $c_p(\lambda_j)=T_0(\lambda_j)=\gamma$, we deduce that $\chi\6{\lambda_j}\in \mathrm{Irr}(B)$, for all $j\in [0,k-1]$. 
\end{proof}

We conclude by showing that the irreducible characters labelled by the partitions $\lambda_0, \ldots, \lambda_{k-1}$ have distinct character degrees. In the following statement we keep the notation introduced in Definition \ref{def: 1}. 

\begin{pro}\label{prop: 1}
Let $j\in [0,k-1]$. Then $h_p(\chi\6{\lambda_j})=j$. 
\end{pro}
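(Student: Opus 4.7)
The plan is to apply Olsson--Macdonald's formula (Theorem \ref{Mac}) directly to the $p$-core tower of $\lambda_j$ and proceed by induction on $j\in [0,k-1]$. Since the tower of each $\lambda_j$ has been explicitly prescribed in Definition \ref{def: 1}, the whole argument reduces to tracking how the quantity $\sum_{i=1}^k |T_i(\lambda_j)|$ evolves as $j$ increases by one.

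For the base case $j=0$, the construction gives $T_i(\lambda_0)=((a_i),\emptyset,\ldots,\emptyset)$ for every $i\in [1,k]$, so $|T_i(\lambda_0)|=a_i$. Plugging this into Theorem \ref{Mac} yields
$$h_p(\chi^{\lambda_0}) = \frac{1}{p-1}\sum_{i=1}^k\big(|T_i(\lambda_0)|-a_i\big) = 0,$$
as required.

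For the inductive step, assume $h_p(\chi^{\lambda_{j-1}})=j-1$. By Definition \ref{def: 1}, $\lambda_j$ differs from $\lambda_{j-1}$ only on rows $k-j$ and $k-j+1$ of the $p$-core tower. A case-by-case inspection of the four cases defining $T_{k-j}(\lambda_j)$ and $T_{k-j+1}(\lambda_j)$ shows in each case that
$$|T_{k-j}(\lambda_j)| \;=\; |T_{k-j}(\lambda_{j-1})|+p, \qquad |T_{k-j+1}(\lambda_j)| \;=\; |T_{k-j+1}(\lambda_{j-1})|-1.$$
Indeed, passing from $(a)$ to $\gamma_a=(p,1^a)$ on row $k-j$ adds exactly $p$ boxes (and similarly for $\emptyset \mapsto ((p-1),(1))$), while the four transitions on row $k-j+1$ each remove exactly one box. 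Consequently
$$\sum_{i=1}^k |T_i(\lambda_j)| \;=\; \sum_{i=1}^k |T_i(\lambda_{j-1})| + (p-1),$$
and Theorem \ref{Mac} gives $h_p(\chi^{\lambda_j}) = h_p(\chi^{\lambda_{j-1}}) + 1 = j$.

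The only real obstacle is the bookkeeping: one must verify that the four cases defining $T_{k-j+1}(\lambda_j)$ genuinely cover all configurations produced by the recursion (so the inductive step never falls outside the stated alternatives), and that each transition truly changes the total size by $+p$ or $-1$ as claimed. Both facts were essentially already checked inside the proof of Lemma \ref{lem: 1}, so no further combinatorial input beyond a careful case analysis is needed.
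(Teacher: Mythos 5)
Your proof is correct and follows essentially the same route as the paper: induction on $j$ using Theorem \ref{Mac}, with the observation (already established in the proof of Lemma \ref{lem: 1}) that each step increases $|T_{k-j}(\cdot)|$ by $p$ and decreases $|T_{k-j+1}(\cdot)|$ by $1$, yielding a net change of $p-1$ and hence a height increase of exactly $1$. No substantive difference from the paper's argument.
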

\begin{proof}
To simplify the notation, given a partition $\mu$ we will denote by $h_p(\mu)$ the integer $h_p(\chi\6\mu)$. We also recall that $w$ denotes the $p$-weight of the block $B(\gamma, w)$ and that $wp$ has $p$-adic expansion given by $$wp=\sum_{i=1}\6ka_ip\6i.$$

We proceed by induction on $j\in [0,k-1]$. Using Theorem \ref{Mac} we see that 
$$h_p(\lambda_0)=\big(\sum_{i=1}\6k|T_i(\lambda_0)|-a_i\big)/(p-1)=0.$$
The second equality follows from the definition of $\lambda_0$ having $|T_i(\lambda_0)|=a_i$ for all $i\geq 1$. 
Let now $j\geq 1$ and let us assume that $h_p(\lambda_{j-1})=j-1$. As already explained in the proof of Lemma \ref{lem: 1} we have that $T_i(\lambda_{j})=T_i(\lambda_{j-1})$, for all $i\in [0,k]\smallsetminus\{k-j, k-j+1\}$. 
Moreover, we have that $|T_{k-j}(\lambda_{j})|=|T_{k-j}(\lambda_{j-1})|+p$ and that $|T_{k-j+1}(\lambda_{j})|=|T_{k-j+1}(\lambda_{j-1})|-1$.
Using Theorem \ref{Mac}, it follows that 
$$(p-1)h_p(\lambda_j)=\big(\sum_{i=1}\6k|T_i(\lambda_j)|-a_i\big)=\big(\sum_{i=1}\6k|T_i(\lambda_{j-1})|-a_i\big)+p-1=(p-1)h_p(\lambda_{j-1})+p-1.$$
Using the inductive hypothesis we get $h_p(\lambda_j)=h_p(\lambda_{j-1})+1=(j-1)+1=j$. 
\end{proof}

We are now ready to prove Theorem \ref{thm: Sn} and, as a consequence, to verify Theorems \ref{thm:23simple}, \ref{thm:2simple}, and \ref{thm:3simple} for alternating groups $\fA_n$. We recall that, given a partition $\lambda$ of $n$,  the restriction $(\chi\6{\lambda})_{\fA_n}$ is irreducible whenever $\lambda$ is not equal to its conjugate partition $\lambda'$. Moreover, if $\chi\6{\lambda}\in \mathrm{Irr}(B_0(\fS_n))$ then every irreducible constituent of $(\chi\6{\lambda})_{\fA_n}$ lies in $\mathrm{Irr}(B_0(\fA_n))$.

\begin{cor}
Let $G\in\{\fS_n, \fA_n\}$ and let $B$ be a $p$-block of $G$ with defect group $D$. Then $\mathrm{dl}(D)\leq |\mathrm{ht}(B)|$.
\end{cor}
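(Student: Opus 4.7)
The case $G = \fS_n$ is Theorem \ref{thm: Sn}; the plan is to reduce the case $G = \fA_n$ to the symmetric group via Clifford theory. First I would fix a $p$-block $b$ of $\fA_n$ with defect group $D_b$ and pick a $p$-block $B = B(\gamma, w)$ of $\fS_n$ covering $b$, with defect group $D_B$. Standard block theory gives $D_b = D_B$ whenever $p$ is odd or $w = 0$, while $[D_B : D_b] = 2$ when $p = 2$ and $w \geq 1$; in every case $\mathrm{dl}(D_b) \leq \mathrm{dl}(D_B) =: k$, and Proposition \ref{prop: 1} supplies partitions $\lambda_0, \ldots, \lambda_{k-1}$ with $\chi^{\lambda_j} \in B$ of height $j$.

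The aim is to transfer these $k$ heights into $b$. Since $B$ covers $b$, for each $j$ there is at least one irreducible constituent $\psi_j$ of $(\chi^{\lambda_j})_{\fA_n}$ lying in $b$; justifying this cleanly requires a small case split. If $B$ covers a unique $\fA_n$-block, then $b$ is $\fS_n$-stable and every irreducible constituent of every restriction of a character in $B$ lies in $b$. Otherwise $B$ covers two blocks $b$ and $b^\sigma$ swapped by some $\sigma \in \fS_n \setminus \fA_n$, and a short argument --- noting that an irreducible restriction $(\chi^\lambda)_{\fA_n}$ is automatically $\sigma$-stable, and hence lies in a $\sigma$-stable block of $\fA_n$ --- forces every $\chi^\lambda \in \mathrm{Irr}(B)$ to be self-conjugate; each restriction then splits as $\psi + \psi^\sigma$, and we take $\psi_j$ to be the constituent in $b$.

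For $p$ odd, halving a degree preserves its $p$-part and $D_b = D_B$, so the height of $\psi_j$ in $b$ equals $j$; thus the $\psi_j$ realise $k$ distinct heights and $|\mathrm{ht}(b)| \geq k \geq \mathrm{dl}(D_b)$, as required. For $p = 2$ with $w \geq 1$, the offsets $\nu_2(|\fS_n|) - \nu_2(|D_B|)$ and $\nu_2(|\fA_n|) - \nu_2(|D_b|)$ coincide (each $\nu_2$-difference decreases by exactly $1$), so heights are preserved for non-self-conjugate $\lambda_j$ and drop by exactly one for self-conjugate $\lambda_j$. The main obstacle I anticipate is to show that any collisions among the values $\{h_b(\psi_j)\}_{j=0}^{k-1}$ caused by self-conjugacy are exactly compensated by a drop in $\mathrm{dl}(D_b)$ relative to $\mathrm{dl}(D_B)$, so that $|\mathrm{ht}(b)| \geq \mathrm{dl}(D_b)$ is maintained; this will require a careful analysis of how conjugation acts on the $p$-core tower of the partitions in Definition \ref{def: 1}, in interaction with the iterated wreath-product structure of $D_b \leq D_B$ supplied by Lemma \ref{lem: derlen}.
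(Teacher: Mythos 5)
Your plan is close in outline: prove the $\fS_n$ case from Definition \ref{def: 1}, Lemma \ref{lem: derlen} and Proposition \ref{prop: 1}, then restrict to $\fA_n$ via Clifford theory. Your height computations are also correct: heights are preserved under restriction for $p$ odd regardless of self-conjugacy, and for $p=2$ (with $w\geq 1$) they are preserved when $\lambda_j$ is not self-conjugate and drop by exactly one when $\lambda_j$ is self-conjugate. But the proposal is genuinely incomplete at $p=2$: you flag as the ``main obstacle'' the possibility that some $\lambda_j$ is self-conjugate and produces a height collision, and you suggest --- without carrying it out --- a compensation argument tracking the interaction of conjugation on $p$-core towers with the iterated wreath-product structure of $D_b\leq D_B$. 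As written this is an open gap, and the argument you sketch would be considerably harder than what is needed.

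The paper sidesteps the obstacle entirely. Citing \cite[Proposition 3.5]{OlssonBook}, it records that for $n\geq 5$ the partitions $\lambda_0,\dots,\lambda_{k-1}$ of Definition \ref{def: 1} satisfy $\lambda_i'\neq\lambda_j$ for \emph{all} $i,j\in[0,k-1]$. In particular none of them is self-conjugate, so every restriction $(\chi^{\lambda_i})_{\fA_n}$ is irreducible and they realise $k$ pairwise distinct heights in $\mathrm{ht}(B)$; hence $\mathrm{dl}(Q)\leq\mathrm{dl}(D)=k\leq|\mathrm{ht}(B)|$ with no case split between $p=2$ and $p$ odd. The ingredient you are missing is exactly this non-self-conjugacy of the constructed partitions; once one has it, the self-conjugate scenario you were trying to handle never arises. (The paper also disposes of $n\leq 4$ by direct verification before invoking Olsson's result, a case your proposal omits.)
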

\begin{proof}
If $n\leq 4$ the statement holds by direct verification. Hence, let us assume that $n\geq 5$. 
We first deal with the case where $G=\fS_n$.
Let $B=B(\gamma, w)$ for some $p$-core $\gamma$ and some integer $w$ such that $n=|\gamma|+wp$. Let $wp=\sum_{i=1}\6ka_ip\6i$ be its $p$-adic expansion. Then by Lemma \ref{lem: derlen} we have that $\mathrm{dl}(D) = k$. From the discussion started in Definition \ref{def: 1} and ended with Proposition \ref{prop: 1} we know that the $p$-block $B$ admits $k$ irreducible characters whose $p$-heigths are pairwise distinct. It follows that $|\mathrm{ht}(B)|\geq k= \mathrm{dl}(D)$, as desired. 

Let now $G=\fA_n$. Let $B$ be a $p$-block of $\fA_n$, covered by the $p$-block $\tilde{B}$ of $\fS_n$. 
Let $\gamma$ and $w$ be such that $\tilde{B}=B(\gamma, w)$ and let $wp=\sum_{i=1}\6ka_ip\6i$ be its $p$-adic expansion. Let $\lambda_0,\lambda_1,\ldots, \lambda_{k-1}$ be the partitions of $n$ described in Definition \ref{def: 1}.   
 Let $Q$ be a defect group of $B$ and $D$ a defect group of $\tilde{B}$ chosen so that $Q\leq D$ (if $p\neq 2$ then $Q=D$).
Since $n\geq 5$ \cite[Proposition 3.5]{OlssonBook} shows that $(\lambda_i)'\neq \lambda_j$ for all $i,j\in [0,k-1]$. It follows that $\{\mathrm{h}_p((\chi\6{\lambda_i})_{\fA_n})\ |\ i\in\ [0,k-1]\}$ is a subset of size $k$ of $\mathrm{ht}(B)$. We conclude that $\mathrm{dl}(Q)\leq\mathrm{dl}(D)= k\leq |\mathrm{ht}(B)|$.
\end{proof}

\begin{cor}
Theorem \ref{thm:23simple} holds for alternating groups $\fA_n$, for $n\geq 5$. 
\end{cor}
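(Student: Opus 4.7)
The strategy is to exploit the standard restriction-extension correspondence between irreducible characters of $\fS_n$ and $\fA_n$: if $\lambda \vdash n$ satisfies $\lambda \neq \lambda'$, then $\chi^\lambda|_{\fA_n}$ is irreducible, and this restriction extends back to $\chi^\lambda$ on $\fS_n$. Combined with the observation made just before the corollary, namely that $\chi^\lambda \in \Irr(B_0(\fS_n))$ forces $(\chi^\lambda)|_{\fA_n} \in \Irr(B_0(\fA_n))$, this reduces Theorem~\ref{thm:23simple} for $S = \fA_n$ to producing suitable non-self-conjugate partitions lying in the principal block $B_0(\fS_n) = B(c_p((n)), w)$.

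For $n \neq 6$ we have $\Aut(\fA_n) = \fS_n$, so the almost simple group $G$ in Theorem~\ref{thm:23simple} lies in $\{\fA_n, \fS_n\}$. The plan is to exhibit a non-self-conjugate partition $\lambda \vdash n$ with $c_p(\lambda) = c_p((n))$ and $\chi^\lambda(1) > 2$, setting $\alpha := (\chi^\lambda)|_{\fA_n} \in \Irr(B_0(\fA_n))$, which extends to $\chi^\lambda \in \Irr(B_0(\fS_n))$ and trivially to itself in $B_0(\fA_n)$. The companion character $\beta$ is then chosen as any member of $\Irr(B_0(G))$ that is non-trivial on $\fA_n$ and has degree different from $\alpha(1)$; its existence follows from the abundance of characters of distinct degrees in $B_0(\fS_n)$, for example via the partitions $\lambda_0, \lambda_1, \ldots$ constructed in Definition~\ref{def: 1} applied to $B_0(\fS_n)$, which by Proposition~\ref{prop: 1} produce characters of distinct heights (hence of at least two distinct degrees as soon as $w \geq 1$).

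For concrete candidates: when $p = 2$ and $n \geq 5$, the partition $\lambda = (n-2, 2)$ is non-self-conjugate, has degree $n(n-3)/2 > 2$, and a single rim $2$-hook removal shows $c_2(\lambda) = c_2((n))$. When $p = 3$, we select $\lambda$ according to $n \bmod 3$ (using partitions such as $(n-2, 2)$, $(n-2, 1, 1)$, or hook-type partitions $(2, 1^{n-2})$), verifying the required $3$-core condition by an elementary rim-hook computation; for instance, when $n = 5$ the partition $(2, 1, 1, 1)$ lies in $B_0(\fS_5)$, is non-self-conjugate, and has degree $4 > 2$, while $\chi^{(2,2,1)}$ of degree $5$ in the same block supplies $\beta$.

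The main obstacle is the exceptional case $n = 6$, where $\Aut(\fA_6) = \mathrm{P\Gamma L}_2(9)$ properly contains $\fS_6$ of index $2$, so that $G$ may additionally equal one of $\PGL_2(9)$, $M_{10}$, or the full automorphism group. For $p = 3$ this case is excluded from Theorem~\ref{thm:23simple}, since $\fA_6 \cong \PSL_2(9)$; for $p = 2$ the verification is carried out directly using the character tables of the relevant groups in GAP. A handful of additional small cases (notably $n \in \{5, 7\}$, where $B_0(\fS_n)$ is relatively small) are likewise checked by hand.
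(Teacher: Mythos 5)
The overall strategy in your proposal — restrict from $\fS_n$ to $\fA_n$ using non-self-conjugate partitions in $B_0(\fS_n)$, and handle $n=6$ separately — is the same as the paper's. However, the paper deliberately reuses the partitions $\lambda_1, \lambda_2$ from Definition~\ref{def: 1} with $\gamma = c_p((n))$: by Lemma~\ref{lem: 1} they label characters in $B_0(\fS_n)$, by \cite[Proposition 3.5]{OlssonBook} they are non-self-conjugate, and by Proposition~\ref{prop: 1} they have heights exactly $1$ and $2$, so that $p^2 \mid \alpha(1)$ (giving $\alpha(1)>2$ for free) while $p \mathrel\Vert \beta(1)$ (giving $\beta$ nontrivial and $\beta(1)\neq\alpha(1)$ simultaneously). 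The only extra step is checking $k\geq 3$, which forces $n\geq p^3$ and explains the GAP verification for $n\leq 7$ ($p=2$) and $n\leq 26$ ($p=3$). You instead propose ad hoc small partitions, which undoes the work the paper already set up and creates gaps.

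Concretely, your $p=3$ candidates are not verified and at least one fails: the $3$-core of $(n-2,1,1)$ is not always $c_3((n))$; for $n=7$ one computes $c_3((5,1,1)) = (2,1,1)$ while $c_3((7)) = (1)$, so $\chi^{(5,1,1)}\notin B_0(\fS_7)$. Picking a working partition for each residue class mod $3$ is exactly the kind of bookkeeping the $\lambda_j$ construction is designed to avoid. Second, the existence of $\beta$ is asserted only "by abundance"; your parenthetical "at least two distinct degrees as soon as $w\geq1$" is too optimistic, since the construction needs $k\geq 2$ (i.e.\ $w\geq p$) to produce $\lambda_1$, and in any case $\beta$ must also avoid having $\fA_n$ in its kernel, which rules out $1_G$ and (for $G=\fS_n$) $\mathrm{sgn}$. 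Finally, for your $p=2$ choice $(n-2,2)$ you would still need to separately bound $\alpha(1)$ and separately construct $\beta$, whereas the height argument handles all of this at once. The proposal is a reasonable sketch but is incomplete in a way the paper's argument is not.
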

\begin{proof}
Let $p\in\{2,3\}$ and $n$ be a natural number. 
The cases $n\leq 7$ for $p=2$ and $n\leq 26$ for $p=3$ can be easily checked in GAP. 
Let us now suppose to be in one of the remaining cases. Let $k\in \mathbb{N}$ be such that $p\6k\leq n< p\6{k+1}$. Let $B=B_0(\fS_n)$. Since $k\geq 3$, recycling the notation introduced in Definition \ref{def: 1} setting $\gamma=c_p((n))$, it follows that $\lambda_1$ and $\lambda_2$ are well-defined partitions of $n$, 
labelling irreducible characters lying in $B$. Using \cite[Proposition 3.5]{OlssonBook} we observe that 
$(\lambda_i)'\neq \lambda_i$, for all $i\in\{1,2\}$. It follows that $\alpha:=(\chi\6{\lambda_2})_{\fA_n}\in\mathrm{Irr}(B_0(\fA_n))$ and $\alpha(1)>2$ because $p\6{2}$ divides $\alpha(1)$ by Proposition \ref{prop: 1}. Similarly $\beta:=\chi\6{\lambda_1}\in\mathrm{Irr}(B)$. By Proposition \ref{prop: 1} we know that $p$ divides $\beta(1)$ and that $p\62$ does not. It follows that $\fA_n$ is not a subgroup of $\mathrm{Ker}(\beta)$ and that $\alpha(1)\neq \beta(1)$. 
\end{proof}

\begin{cor}
Theorems \ref{thm:2simple} and \ref{thm:3simple}  hold for alternating groups $\fA_n$, for $n\geq 5$. 
\end{cor}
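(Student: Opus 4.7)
The plan is to combine a finite GAP verification with the $p$-core tower construction developed above. Since $\fA_5\cong\PSL_2(5)$ and $\fA_6\cong\PSL_2(9)$, the main assertion of Theorem \ref{thm:2simple} is vacuous for $n\in\{5,6\}$, and the accompanying metabelian assertion is verified in GAP for the covering groups $2.\fA_5$, $2.\fA_6$, $3.\fA_6$ and $6.\fA_6$. The hypothesis of Theorem \ref{thm:3simple} forces $n\geq 9$, since for $n\leq 8$ the Sylow $3$-subgroup of $\fS_n$ is abelian. The remaining small ranges, namely $n=7$ for $p=2$ and $9\leq n\leq 26$ for $p=3$, are dispatched by direct computation in GAP.

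In the generic range --- $n\geq 8$ when $p=2$, and $n\geq 27$ when $p=3$ --- a direct computation of the $p$-adic expansion of $wp$, where $w$ is the weight of $B_0(\fS_n)$ and $|c_p((n))|\leq p-1$, shows that the defect group of $B_0(\fS_n)$ has derived length $k\geq 3$ in the notation of Lemma \ref{lem: derlen}. Applying Definition \ref{def: 1} yields partitions $\lambda_0,\lambda_1,\lambda_2$ of $n$ with $\chi^{\lambda_j}\in\Irr(B_0(\fS_n))$, and Proposition \ref{prop: 1} gives $h_p(\chi^{\lambda_j})=j$. The $p$-parts of the three degrees $\chi^{\lambda_j}(1)$ are therefore the pairwise distinct values $p^j$ for $j\in\{0,1,2\}$; together with $1_{\fS_n}$ this yields four distinct elements of $\cd(B_0(\fS_n))$, provided $\chi^{\lambda_0}(1)>1$, which is a routine check from the shape of $\lambda_0$ in the ranges under consideration.

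To descend to $\fA_n$ we invoke \cite[Proposition 3.5]{OlssonBook}, which ensures $\lambda_j\neq\lambda_j'$ for $n\geq 5$, so that each restriction $(\chi^{\lambda_j})_{\fA_n}$ is irreducible of the same degree as $\chi^{\lambda_j}$, producing four distinct degrees in $\cd(B_0(\fA_n))$. For any almost simple $G$ with socle $\fA_n$ (so $G\in\{\fA_n,\fS_n\}$ when $n\neq 6$), Clifford theory shows that each degree in $\cd(B_0(G))$ above one of these four characters differs from the original by at most a factor of $2$; since the $p$-parts $p^j$ for $j\in\{0,1,2\}$ already separate the three constructed degrees, four distinct values survive in $\cd(B_0(G))$. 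The case $n=6$, with its additional outer automorphisms, is covered by the GAP verification.

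The main obstacle I anticipate is ensuring that the four constructed degrees remain distinct after passage from $\fS_n$ to $\fA_n$ and thence to $G$, and that the trivial character is genuinely distinct from $\chi^{\lambda_0}$. The $p$-part separation handles potential coincidences uniformly; the finite GAP verification covers the small cases $n=7$ (for $p=2$) and $9\leq n\leq 26$ (for $p=3$), where $k\leq 2$ and the general construction does not immediately deliver four degrees.
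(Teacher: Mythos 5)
The overall shape of your argument (small cases by GAP, generic cases via the $p$-core tower machinery of Section~4) is the same as the paper's, but the specific construction you run in the generic range has a genuine gap. You propose to use only $\lambda_0,\lambda_1,\lambda_2$ (needing only $k\geq 3$) together with $1_{\fS_n}$, and you justify the fourth degree by asserting that $\chi^{\lambda_0}(1)>1$ is ``a routine check from the shape of $\lambda_0$.'' This is false in general. Tracing the definition of the $p$-core tower, one checks that $\lambda_0=(n)$ whenever $n=p^k$: the $\beta$-set of $(n)$ is $\{n\}$, so the sequence of $p$-quotients places the single bead on runner $0$ at every level, producing exactly the tower $T_0=\emptyset$, $T_i=(\emptyset,\dots,\emptyset)$ for $0<i<k$, and $T_k=((1),\emptyset,\dots,\emptyset)$, which is precisely $T(\lambda_0)$ since $wp=p^k$ has $a_k=1$ and $a_i=0$ otherwise. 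Thus $\chi^{\lambda_0}=1_{\fS_n}$ for $n=p^k$, and your proposed four degrees collapse to three. This occurs at $n=8$ for $p=2$ and $n=27$ for $p=3$ --- the very first values of your ``generic range'' --- and again at every $n=p^k$, so no finite extension of the GAP check can repair it.

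The paper sidesteps this by never relying on the trivial character to supply a new degree. For $p=2$ it takes $n\geq 16$, which forces $k\geq 4$, so that the four partitions $\lambda_0,\lambda_1,\lambda_2,\lambda_3$ are all defined; by Proposition~\ref{prop: 1} they have pairwise distinct heights $0,1,2,3$, and since distinct heights force distinct $p$-parts of degree, the four restrictions $(\chi^{\lambda_i})_{\fA_n}$ (irreducible by Proposition 3.5 of \cite{OlssonBook}) already exhibit four distinct degrees in $B_0(\fA_n)$, with the remaining range $5\leq n\leq 15$ done in GAP. For $p=3$ the paper switches to different explicit partitions of $n$ rather than the $\lambda_j$. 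To repair your proof along the same lines, replace the $k\geq 3$ requirement by $k\geq 4$ and use $\lambda_0,\dots,\lambda_3$ directly (enlarging the GAP range accordingly), instead of appealing to the trivial character and the unsubstantiated claim that $\chi^{\lambda_0}\neq 1_{\fS_n}$.

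A smaller remark: the Clifford-theoretic paragraph about passing from $\fA_n$ to an arbitrary almost simple $G$ is unnecessary here. For $n\neq 6$ the only almost simple groups with socle $\fA_n$ are $\fA_n$ and $\fS_n$; the characters $\chi^{\lambda_j}$ already live in $B_0(\fS_n)$, and their restrictions, being irreducible of the same degree, live in $B_0(\fA_n)$, so the distinct degrees are obtained directly without any ``factor of $2$'' bookkeeping. The case $n=6$ you correctly relegate to GAP.
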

\begin{proof}
We start with Theorem \ref{thm:2simple}.
Let $B$ be the principal $2$-block of $\fS_n$. If $5\leq n\leq 15$ the statement can be checked in GAP. Let us now assume that $n\geq 16$. Setting $\gamma=c_p((n))$ we have that the partitions $\lambda_0, \lambda_1, \lambda_2$ and $\lambda_3$ are well defined partitions of $n$ and label irreducible characters of $\fS_n$ lying in $B$. By \cite[Proposition 3.5]{OlssonBook} we know that $(\lambda_i)'\neq \lambda_i$ for all $0\leq i\leq 3$. It follows that $\{(\chi\6{\lambda_i})_{\fA_n}\ |\  0\leq i\leq 3\}$ is a subset of size $4$ of $\mathrm{Irr}(B_0(\fA_n))$. 

To conclude we now verify Theorem \ref{thm:3simple}. Clearly $n\geq 9$ as a Sylow $3$-subgroup of $\fA_n$ is nonabelian. In this case we consider $\lambda=(n)$, $\mu=(n-3,1\6{3})$ and $\eta=(n-3, 3)$. 
Clearly $(\chi\6{\lambda})_{\fA_n}, (\chi\6{\mu})_{\fA_n}, (\chi\6{\eta})_{\fA_n}\in\mathrm{Irr}(B_0(\fA_n))$ and have pairwise distinct character degrees.   
\end{proof}
\begin{rem}
We conclude by mentioning that Theorem \ref{thm: Sn} was already known to hold in the case of symmetric groups for primes $p\geq 5$ \cite{moreto}. Moreto's proof relies on the important result of Granville and Ono \cite{GO96}, showing that for every natural number $n$ there exists a $p$-core partition of size $n$, provided that $p$ is at least $5$. Our proof does not use this fact; it is uniform for all prime numbers and allows us to deduce the needed results on alternating groups. 
\end{rem}

\section{More on Question \ref{a}}\label{sec: 5}

In this section, we discuss certain cases of Question \ref{a} and related questions. We denote by ${\rm{ht}}(B)$ the set of heights of characters in a block $B$.

\subsection{Blocks of the general linear group in defining characteristic}

Using results of A. Moret\'o, we show that Question \ref{a} holds for blocks of the general linear group in defining characteristic.

\begin{lem}\label{moreto}
If $B$ is a $p$-block of positive defect of $\GL_n(q)$, where $q$ is a power of $p$, then $|{\rm{ht}}(B)|\geq n-1$.
\end{lem}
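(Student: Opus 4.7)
The plan is to reduce to the principal block by a twisting argument and then exhibit $n-1$ distinct heights coming from hook-shaped unipotent characters.

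First, I would recall the classical fact (going back to Humphreys) that in defining characteristic every $p$-block of $\GL_n(q)$ is either of defect zero or of maximal defect, with the Sylow $p$-subgroup $U$ (upper unitriangular matrices) of order $q^{\binom{n}{2}}$ serving as the defect group in the latter case. Since the hypothesis is that $B$ has positive defect, $B$ has maximal defect, so $v_p(|G|) = v_p(|D|)$ and consequently for each $\chi \in \mathrm{Irr}(B)$ the height satisfies $h_p(\chi) = v_p(\chi(1))$. In particular $|\mathrm{ht}(B)|$ equals the number of distinct $p$-adic valuations attained by the degrees in $B$.

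Second, I would argue that it suffices to handle the principal block $B_0 = B_0(\GL_n(q))$. The quotient $\GL_n(q)/\SL_n(q) \cong \FF_q^\times$ has order $q-1$ prime to $p$, so the group $\mathrm{Lin}(\GL_n(q))$ of linear characters has order prime to $p$, and tensoring by a linear character is a block-permuting operation that fixes each character degree. A standard central-character argument (all positive-defect blocks have defect group $U$ and are distinguished by their restriction to the center) shows that every positive-defect block is a linear twist of $B_0$. Hence $\mathrm{ht}(B) = \mathrm{ht}(B_0)$. This is the step where I would invoke Moret\'o's results explicitly if a cleaner reference is preferred.

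Third, I would produce $n-1$ characters in $B_0$ with pairwise distinct $p$-adic valuations by means of the hook unipotent characters $\chi^{(n-j,1^{j})}$ for $j = 0,1,\dots,n-2$. By the hook-length formula for unipotent degrees of $\GL_n(q)$, the $p$-part of $\chi^{(n-j,1^{j})}(1)$ is $q^{n(\lambda)} = q^{\binom{j+1}{2}}$, since all hook-length factors $q^{h}-1$ are coprime to $p$. These values are strictly increasing in $j$, yielding $n-1$ distinct heights. Only the hook $(1^n)$, corresponding to the Steinberg character of defect zero, is excluded, so all of these characters actually lie in $B_0$.

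The main obstacle is the reduction in the second step: one must be confident that every positive-defect block of $\GL_n(q)$ in defining characteristic is a linear twist of $B_0$, so that the set of heights is the same across all positive-defect blocks. The first and third steps are essentially bookkeeping once the combinatorial formula for unipotent degrees is in hand.
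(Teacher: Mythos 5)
The paper does not prove this lemma at all; its ``proof'' is a one-line citation to Moret\'o [Mor04, Lemma~3.1]. So you are supplying a self-contained argument that the paper does not contain, and it has to stand on its own. Steps 1 and 3 of your outline are fine: by the Dagger--Humphreys theorem every $p$-block of $\GL_n(q)$ in defining characteristic has defect zero or full defect, so the height of a character in a positive-defect block is just $v_p$ of its degree; and the hook unipotent characters $\chi^{(n-j,1^j)}$, $0\le j\le n-2$, are trivial on the center, not of defect zero, and have degrees with $p$-part $q^{\binom{j+1}{2}}$, giving $n-1$ distinct heights in $B_0$.

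The gap is exactly where you worried it might be, and it is real. The full-defect blocks of $\GL_n(q)$ in defining characteristic are indeed parametrized by $\Irr(\zent{\GL_n(q)})\cong\Irr(\FF_q^\times)$, and tensoring with linear characters of $G$ permutes them while fixing degrees. But linear characters of $\GL_n(q)$ factor through $\det$, and a scalar matrix $\lambda I$ has determinant $\lambda^n$; so under the identifications of both sides with $\Irr(\FF_q^\times)$, the restriction map $\Lin(\GL_n(q))\to\Irr(\zent{\GL_n(q)})$ is the $n$-th power map, which is surjective only when $\gcd(n,q-1)=1$. Otherwise most positive-defect blocks are \emph{not} linear twists of $B_0$, and your argument says nothing about their heights. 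The fix is to run the hook argument inside an appropriate Lusztig series instead of reducing to $B_0$: given $1\neq\theta\in\Irr(\zent{\GL_n(q)})$, choose a semisimple $s$ in the dual $\GL_n(q)^*\cong\GL_n(q)$ with eigenvalues $1$ (multiplicity $n-1$) and $\beta\neq 1$ (multiplicity $1$), where $\beta=\det(s)$ is chosen to correspond to $\theta$. Then $\cent{\GL_n(q)^*}{s}\cong\GL_{n-1}(q)\times\GL_1(q)$, the characters of $\EC(\GL_n(q),s)$ indexed by hooks $\mu\vdash n-1$ have degrees with $p$-part $q^{n(\mu)}=q^{\binom{j+1}{2}}$ for $0\le j\le n-2$, none of these is of defect zero (all are $<q^{\binom n2}$), and they all carry central character $\theta$, hence all lie in $B_\theta$. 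This yields $n-1$ distinct heights in $B_\theta$. (Alternatively one can extract the same degree information from Green's parametrization of $\Irr(\GL_n(q))$, which is closer to Moret\'o's own argument.)
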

\begin{proof}
This is \cite[Lemma 3.1]{moreto}.
\end{proof}

\begin{lem}\label{hup}
The Sylow $p$-subgroups of $\GL_n(q)$, where $q$ is a power of $p$, have nilpotency class $n-1$.
\end{lem}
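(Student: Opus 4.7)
The plan is to realize a Sylow $p$-subgroup of $\GL_n(q)$ concretely as the group $U := U_n(q)$ of upper unitriangular matrices over $\FF_q$ and then compute its lower central series explicitly. Since $|U| = q^{n(n-1)/2}$ equals the $p$-part of $|\GL_n(q)|$ when $q$ is a power of $p$, $U$ is indeed a Sylow $p$-subgroup, and it suffices to establish that $U$ has class exactly $n-1$.

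For $1 \leq k \leq n$, I would introduce the filtration
\[
U^{(k)} := \{ I + A : A \text{ strictly upper triangular with } A_{ij} = 0 \text{ whenever } j - i < k \}.
\]
Then $U^{(1)} = U$, $U^{(n)} = 1$, and each $U^{(k)}$ is a normal subgroup of $U$ (it is the kernel of the obvious map to upper unitriangular matrices modulo the $k$-th superdiagonal). The heart of the argument is the commutator identity for elementary matrices $e_{ij}(a) := I + a E_{ij}$ with $i<j$, namely
\[
[e_{ij}(a), e_{kl}(b)] = \begin{cases} e_{il}(ab) & \text{if } j = k \text{ and } i \neq l, \\ e_{kj}(-ab) & \text{if } i = l \text{ and } k \neq j, \\ 1 & \text{if } \{i,j\}\cap\{k,l\}=\emptyset.\end{cases}
\]
From this (and a short bookkeeping argument using that $U^{(k)}$ is generated by the $e_{ij}(a)$ with $j-i \geq k$) one gets the inclusion $[U^{(i)}, U^{(j)}] \subseteq U^{(i+j)}$.

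Next I would prove the sharper equality $[U, U^{(k)}] = U^{(k+1)}$ for each $1 \leq k \leq n-1$. The containment $\subseteq$ is the case $i=1$ of the previous paragraph; for $\supseteq$, given a generator $e_{ij}(c)$ of $U^{(k+1)}$ (so $j - i \geq k+1$), one writes it as the commutator $[e_{i,i+1}(c), e_{i+1,j}(1)] = e_{ij}(c)$, which lies in $[U, U^{(k)}]$ because $e_{i+1,j}(1) \in U^{(k)}$. Together with the obvious fact that $\gamma_1(U) = U = U^{(1)}$, induction on $k$ gives $\gamma_k(U) = U^{(k)}$ for all $k \geq 1$. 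In particular $\gamma_n(U) = U^{(n)} = 1$ while $\gamma_{n-1}(U) = U^{(n-1)}$ contains $e_{1n}(1) \neq 1$, so $U$ has nilpotency class exactly $n-1$.

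The only nontrivial step is the explicit commutator calculation together with checking the equality $[U, U^{(k)}] = U^{(k+1)}$; both are routine once one has chosen the right generating set of elementary matrices, so I do not expect any real obstacle.
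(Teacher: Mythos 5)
Your proof is correct, but note that the paper does not actually give an argument here: it simply cites Huppert, \emph{Endliche Gruppen I}, Satz III.16.3, which is the standard reference for the fact that the group $U_n(q)$ of upper unitriangular matrices has nilpotency class $n-1$. Your proposal makes this self-contained by directly computing the lower central series of $U_n(q)$ via the filtration $U^{(k)}$ by superdiagonals and the Chevalley commutator relations among elementary matrices $e_{ij}(a)$. The key identity $[U, U^{(k)}] = U^{(k+1)}$ with the explicit generator $e_{ij}(c) = [e_{i,i+1}(c), e_{i+1,j}(1)]$ is exactly right, and the conclusion $\gamma_k(U) = U^{(k)}$ follows cleanly by induction. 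One small inaccuracy: the third case of your commutator formula should read ``$j \neq k$ and $i \neq l$'' rather than ``$\{i,j\}\cap\{k,l\}=\emptyset$''; the commutator is also trivial when $i = k$ (since $E_{ij}E_{il} = 0 = E_{il}E_{ij}$) or when $j = l$, and those cases are needed to carry out the bookkeeping step $[U^{(i)}, U^{(j)}] \subseteq U^{(i+j)}$ for arbitrary pairs of elementary generators. This is easily repaired and does not affect the argument. In short: the paper appeals to a reference, whereas you prove the statement from scratch; both are valid, and your version has the advantage of being self-contained at the cost of a paragraph of computation.
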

\begin{proof}
This is \cite[Satz III.16.3]{hup}.
\end{proof}

\begin{cor}\label{gl}
Question \ref{a} has an affirmative answer for the blocks of $\GL_n(q)$ in defining characteristic.
\end{cor}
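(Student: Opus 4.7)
The plan is to chain together the two preceding lemmas with two elementary, standard inequalities to produce the desired bound $\mathrm{dl}(D)\leq |\cd(B)|$. Specifically, I would use that (a) characters in $B$ of distinct $p$-heights necessarily have distinct degrees (since the $p$-parts of their degrees differ), so $|\cd(B)|\geq |\mathrm{ht}(B)|$; and (b) for any $p$-group $P$, the derived length is bounded above by the nilpotency class, via the inclusion $P^{(i)}\leq \gamma_{i+1}(P)$ of the derived series inside the lower central series.

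First I would dispose of the trivial case: if $B$ has defect zero, then $D=1$, so $\mathrm{dl}(D)=0$ and $|\cd(B)|\geq 1$, so the inequality is immediate. (The case $n=1$ also falls here, since $\GL_1(q)$ has trivial Sylow $p$-subgroups when $q$ is a power of $p$.) So I would assume from now on that $B$ has positive defect and $n\geq 2$.

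Next, by Lemma \ref{moreto}, $|\mathrm{ht}(B)|\geq n-1$, and so by observation (a), $|\cd(B)|\geq n-1$. On the other hand, the defect group $D$ is contained in a Sylow $p$-subgroup $P$ of $\GL_n(q)$, and nilpotency class does not increase on passing to subgroups, so $\mathrm{cl}(D)\leq \mathrm{cl}(P)=n-1$ by Lemma \ref{hup}. Combining with observation (b), $\mathrm{dl}(D)\leq \mathrm{cl}(D)\leq n-1\leq |\cd(B)|$, as desired.

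There is no genuine obstacle here; the work has already been done by Moret\'o (Lemma \ref{moreto}) and Huppert (Lemma \ref{hup}). The only point worth flagging is the implicit use of the two standard facts (a) and (b), which link the four quantities $\mathrm{dl}(D)$, $\mathrm{cl}(P)$, $|\mathrm{ht}(B)|$, $|\cd(B)|$ into a single chain of inequalities; both are one-line consequences of the definitions and may be invoked without further comment.
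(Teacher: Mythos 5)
Your proof is correct and takes essentially the same route as the paper's: chain Lemma \ref{moreto} and Lemma \ref{hup} together via the two standard inequalities $\dl(D)\le\mathrm{cl}(D)\le\mathrm{cl}(P)=n-1$ and $n-1\le|\mathrm{ht}(B)|\le|\cd(B)|$. The paper states this in one line without separating out the defect-zero case; you are merely more explicit about that (trivial) case.
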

\begin{proof}
The result follows from Lemmas \ref{moreto} and \ref{hup} using that the derived length is bounded by the nilpotency class and $|{\rm{ht}}(B)|\leq |\cd(B)|$.
\end{proof}

\begin{cor}\label{sl}
Let $N\normal \GL_n(q)$ for a power $q$ of a prime $p$, and assume $|\GL_n(q):N|$ is not divisible by $p$. Then Question \ref{a} has an affirmative answer for the $p$-blocks of $N$. In particular, Question \ref{a} holds for the blocks of $\SL_n(q)$ in the defining  characteristic. 
\end{cor}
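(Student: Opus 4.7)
The plan is to reduce Question \ref{a} for blocks of $N$ to Corollary \ref{gl} by showing that, when passing from a covering block $B$ of $G:=\GL_n(q)$ to a block $b$ of $N$ it covers, both the defect groups and the sets of heights are preserved. Let $b$ be a $p$-block of $N$ with defect group $D$; we may assume $D\neq 1$ since the statement is trivial otherwise. Let $B$ be a $p$-block of $G$ covering $b$.

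First I would check that a defect group of $B$ is also a defect group of $b$. By \cite[Theorem 9.26]{N98} (or its analogue), there exists a defect group $D_B$ of $B$ such that $D_B\cap N$ is a defect group of $b$. Since $[D_B:D_B\cap N]$ divides $[G:N]$, which is coprime to $p$, and is also a power of $p$ (as $D_B$ is a $p$-group), we conclude $D_B\leq N$; in particular $B$ has positive defect and (up to $N$-conjugacy) $D_B=D$. Next I would verify that $\mathrm{ht}(B)=\mathrm{ht}(b)$. For any $\chi\in\Irr(B)$ and any $\theta\in\Irr(b)$ lying under $\chi$, Clifford theory gives that $\chi(1)/\theta(1)$ divides $[G:N]$, a $p'$-integer, hence $\chi(1)_p=\theta(1)_p$. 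Combined with $|D_B|=|D|$ this forces $h_B(\chi)=h_b(\theta)$. Since every character in $B$ lies over some character in $b$ (up to $G$-conjugacy of covered blocks) and conversely, we obtain $\mathrm{ht}(B)=\mathrm{ht}(b)$.

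From here the proof is immediate. Lemma \ref{moreto} gives $|\mathrm{ht}(B)|\geq n-1$, and the trivial inequality $|\mathrm{ht}(b)|\leq |\cd(b)|$ yields $|\cd(b)|\geq n-1$. On the other hand, a Sylow $p$-subgroup $P$ of $N$ equals a Sylow $p$-subgroup of $G$ (because $[G:N]$ is a $p'$-integer), so $D\leq P$ and by Lemma \ref{hup}
$$\mathrm{dl}(D)\leq \mathrm{dl}(P)\leq n-1\leq |\cd(b)|,$$
proving the first assertion. For the second assertion, since $[\GL_n(q):\SL_n(q)]=q-1$ is coprime to $p$ whenever $q$ is a power of $p$, $\SL_n(q)$ satisfies the hypothesis on $N$.

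I do not anticipate any serious obstacle: each step is a standard consequence of Clifford/block theory once the defect-group comparison is in place, and the arithmetic bound $n-1\leq |\cd(b)|$ is inherited directly from Lemma \ref{moreto} via the equality $\mathrm{ht}(B)=\mathrm{ht}(b)$. The only point one should be careful about is making sure defect groups of $B$ and $b$ genuinely coincide (rather than only that one contains the other), but this follows cleanly from the $p'$-index assumption.
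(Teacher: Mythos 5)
Your proof is correct and follows essentially the same route as the paper's: both pass from a block $b$ of $N$ to a covering block $B$ of $\GL_n(q)$, use \cite[Theorem 9.26]{N98} to identify defect groups, observe that $\mathrm{ht}(B)=\mathrm{ht}(b)$ because $[G:N]$ is a $p'$-number (the paper cites \cite[Theorem 5.12]{N18} for the degree-$p$-part equality, while you derive it directly from Clifford theory), and then invoke the argument of Corollary \ref{gl}. The only cosmetic difference is that you re-derive the defect-group coincidence from the $p'$-index hypothesis rather than quoting it outright.
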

\begin{proof}
Let $b$ be a block of $N$ and let $B$ be a block of $\GL_n(q)$ covering $b$. By \cite[Theorem 9.26]{N98}, the defect groups of $B$ are defect groups of $b$. Let  $\psi\in\Irr(b)$ and let $\chi\in\Irr(B)$ be over $\psi$ (see \cite[Theorem 9.4]{N98}). It follows from \cite[Theorem 5.12]{N18} that $\psi(1)_p=\chi(1)_p$ so we see that ${\rm{ht}}(B)={\rm{ht}}{(b)}$, so this result follows from the argument of the proof of Corollary \ref{gl}.
\end{proof}

\subsection{Checking GAP libraries}

Let $p$ be a prime and let $D$ be a $p$-group of size $p^k$. Since groups of order $p^2$ are abelian, it is easy to see that $\dl(D)\leq \frac{k+1}{2}$. Using this bound, we have checked in \cite{GAP} that Question \ref{a} holds for all sporadic groups. Furthermore, Question \ref{a} has been checked for all perfect groups and the primitive groups of degree up to $1500$ and size up to $10^6$. Whenever the bound above does not work, in most cases it suffices to check that $|\cd(B)|\leq \dl(P)$ for a Sylow $p$-subgroup $P$, since $\dl(D)\leq\dl(P)$ for any defect group $D$. Otherwise, the fact that if $D$ is a defect group of some block, then $D=\oh p {\norm G D}$ (see \cite[Corollary 4.18]{N98}) helps locating possible defect groups of a block if the defect is known.

\subsection{Related questions}

If $D$ is a $p$-group, Taketa's Theorem \cite[Theorem 5.12]{Is} states that $|\cd(D)|\geq \dl(D)$, so questions regarding the derived length of $D$ are closely related to its set of character degrees. The following conjecture was recently asked in \cite{flz}.

\begin{conj}[Feng--Liu--Zhang]\label{feng}
Let $B$ be a $p$-block of a finite group $G$ with defect group $D$. Let $p^a$ be the maximal element in $\cd(D)$ and let $b$ be the maximal height of the characters in $B$. Then $a\leq b$.
\end{conj}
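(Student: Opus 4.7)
The final statement is Conjecture~\ref{feng} of Feng-Liu-Zhang, and the paper announces in Section~4 that the construction of Section~\ref{sec: 5} will serve as a counterexample to it (simultaneously with the failure of $\dl(D)\leq |\mathrm{ht}(B)|$ for solvable groups). So my plan is to \emph{disprove} the conjecture by producing an explicit counterexample rather than to prove it.

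Any counterexample must exhibit a finite group $G$ with a $p$-block $B$ and defect group $D$ such that $\max\cd(D)=p^a$ while every character in $B$ has height strictly less than $a$. Because every block contains a height-zero character, and because the same group must also satisfy $\dl(D)>|\mathrm{ht}(B)|$, the cleanest configuration to aim for is a solvable $G$ with $\mathrm{ht}(B)=\{0,b\}$ for some small $b\geq 1$, and a non-abelian defect group $D$ of derived length at least three with $\max\cd(D)=p^a>p^b$ (note that $\mathrm{ht}(B)=\{0\}$ is ruled out because, combined with \cite{kessarmalle}, it would force $D$ abelian).

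I would search among small solvable groups of the shape $G=N\rtimes H$, where $H$ is a $p$-group of derived length $3$ chosen to have a character of large degree (for instance an iterated wreath product such as a Sylow $p$-subgroup of $\fS_{p^3}$) and $N$ is a $p'$-group on which $H$ acts. The Fong-Reynolds machinery identifies the blocks of such a $G$ with $G$-orbits of $\theta\in\Irr(N)$, and the defect group of the block over $\theta$ is a Sylow $p$-subgroup of $G_\theta$. If I can pick an $H$-invariant $\theta$ (so that the defect group of the corresponding block $B$ is all of $D=H$) for which the $2$-cocycle class in $H^2(D,\CC^\times)$ controlling the extension of $\theta$ forces the associated projective characters of $D$ to have degrees in a strict subset of $\cd(D)$, then $B$ will have defect group $D$ while $\max\mathrm{ht}(B)<\log_p\max\cd(D)$, falsifying Conjecture~\ref{feng} and, simultaneously, the bound of Theorem~\ref{thm: C}.

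\textbf{Main obstacle.} The crux is producing an explicit triple $(H,N,\theta)$ for which the cocycle acts as desired on $\cd(H)$. This is ultimately a concrete combinatorial question; a computer search in GAP among small solvable groups, guided by the constraints above (derived length $\geq 3$ of a Sylow subgroup, a block whose character degrees have only two distinct $p$-parts), should yield a candidate, after which the verification reduces to a direct character table computation of $\cd(D)$ and of $\mathrm{ht}(B)$.
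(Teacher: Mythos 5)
You correctly read the statement as something to be refuted rather than proved, and the constraints you work out --- a solvable $G$, two distinct heights in the block, $\dl(D)\geq 3$, $\max\cd(D)=p^a$ with $a$ exceeding the largest height --- are precisely the constraints the paper's counterexample realizes. However, your argument stops at the design stage: you describe how one might search, and what a verification would then look like, but never name a group, a block, a defect group, or a degree set. Since the entire content of disproving this statement is the explicit example together with its verification, this is a genuine gap, not merely a stylistic one.

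The mechanism you sketch is also not the one the paper uses. You propose a non-principal block over an $H$-invariant $\theta\in\Irr(N)$ for a normal $p'$-group $N$, hoping that a nontrivial obstruction cocycle in $H^2(D,\CC^\times)$ forces the projective degrees of $D$ into a proper subset of $\cd(D)$. The paper instead works with the \emph{principal} block of a solvable group $H$ with $\oh{p'}{H}=1$: there is no Fong--Reynolds reduction and no cocycle in sight. Concretely, with $p=3$ one takes $G=\texttt{SmallGroup(729,955)}$ of order $3^6$, chooses $Q\in\Syl_{13}(\Aut(G))$ and $D\in\Syl_3(\norm{\Aut(G)}{Q})$, sets $R=\langle D,Q\rangle$ and $H=G\rtimes R$, a solvable group of order $3^7\cdot 13$. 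Then $\Irr(H)=\Irr(B_0(H))$, $\cd(B_0(H))=\{1,3,13,39\}$, so $\mathrm{ht}(B_0(H))=\{0,1\}$ and $b=1$, while $\cd(P)=\{1,3,9\}$ for $P\in\Syl_3(H)$, so $a=2>b$. The degree collapse is produced by the action of the order-$13$ automorphism on the normal $3$-group, not by twisted group algebras. As a side remark, your appeal to \cite{kessarmalle} to exclude $\mathrm{ht}(B)=\{0\}$ invokes the wrong direction of the height zero conjecture (Kessar--Malle prove that abelian defect forces all heights zero, not the converse); for the solvable case you would want Gluck--Wolf, or simply note that requiring $\dl(D)\geq 3$ already renders the case moot.
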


Using \cite{GAP} notation, let $G=\verb+SmallGroup(729, 955)+$. Let $A=\Aut(G)$ and $Q\in\Syl_{13}(A)$. Let $D\in\Syl_{3}(\norm A Q)$ and $R=\langle D, Q \rangle\leq A$. We consider the semidirect product $H=G\rtimes R$. Then $H$ is a solvable group of size $28431=3^7\cdot 13$, with $\oh{3'}H=1$. Then $\Irr(H)=\Irr(B_0(H))$ and $\cd(H)=\cd(B_0(H))=\{1,3,13, 39\}$. However if $P\in\Syl_p(H)$ we have $\cd(P)=\{1, 3, 9\}$, so $H$ is a counterexample to Conjecture \ref{feng}. This example is based on an unpublished note of I. M. Isaacs regarding a similar question. In fact, this provides a counterexample to Conjecture \ref{feng} for all odd primes, as asked by G. Malle after G. Navarro found the counterexample  $\verb+SmallGroup(192,955)+$ for $p=2$.

In Corollaries \ref{gl} and \ref{sl}, as well as in the proof of Theorem \ref{thm: Sn}, we have used the bound $\dl(D)\leq|{\rm ht}(B)|$ taking advantage of the fact that $|{\rm ht}(B)|\leq |\cd(B)|$. It is a natural question to ask if we can improve the bound in Question \ref{a} by $\dl(D)\leq|{\rm ht}(B)|$ in general. However this bound does not hold even in solvable groups; the group $H$ constructed above is a counterexample, as its Sylow $3$-subgroups have derived length 3. We mention that for $p=2$, the group \verb+PerfectGroup(17280,1)+ is a counterexample for this bound. No solvable counterexamples have been found for $p=2$.

Finally, we would like to remark that if $G$ is solvable and $B$ is the principal block, then an affirmative answer to Question \ref{a} would follow from assuming the Isaacs--Seitz conjecture (see \cite[(8.2)]{N10}).

\end{document}